\documentclass[preprint]{article}

\usepackage{neurips_2026}
\usepackage{graphicx}
\newcommand{\new}[1]{{#1}}

\usepackage[utf8]{inputenc} 
\usepackage[T1]{fontenc}    
\usepackage{hyperref}       
\usepackage{url}            
\usepackage{booktabs}       
\usepackage{amsfonts,amsmath,amssymb,amsthm}       
\usepackage{nicefrac}       
\usepackage{microtype}      
\usepackage{xcolor}         

\newtheorem{theorem}{Theorem}

\newtheorem{proposition}{Proposition}
\newtheorem{corollary}{Corollary}

\newcommand{\ie}{i.e.}

\newcommand{\re}[1][\empty]{\mathbb{R}^{#1}}
\newcommand{\ip}[1][\cdot,\cdot]{\left\langle #1 \right\rangle}
\newcommand{\rank}{\mbox{rank}}

\newcommand{\Dset}{\mathcal{D}}        
\newcommand{\Kset}{\mathcal{K}}        
\newcommand{\DCset}{\mathcal{D}_C}     
\newcommand{\sign}{\mbox{sign}}

\newcommand{\dimn}{n}
\newcommand{\dimm}{m}
\newcommand{\n}{\dimn}
\newcommand{\m}{\dimm}
\newcommand{\PSp}{\re[\m\times \n]}
\newcommand{\loss}{f}
\newcommand{\lmin}{\loss^\star}
\newcommand{\x}{x}
\newcommand{\xs}{\x^*}
\newcommand{\y}{y}
\newcommand{\eps}{\varepsilon}

\newcommand{\NS}{\mathcal{N}}
\newcommand{\SSat}{\mathcal{S}}

\newcommand{\xk}{\x_{k}}
\newcommand{\xp}{\x_{k+1}}
\newcommand{\xm}{\x_{k-1}}
\newcommand{\yk}{\y_{k}}
\newcommand{\yp}{\y_{k+1}}
\newcommand{\Gk}{G_k}
\newcommand{\Bk}{B_k}
\newcommand{\Bm}{B_{k-1}}

\newcommand{\dLoss}[1][\cdot]{\nabla \!\loss(#1)}
\newcommand{\Ok}{O_{k}}

\newcommand{\Hk}{H_{k}}
\newcommand{\Hp}{H_{k+1}}
\newcommand{\Hm}{H_{k-1}}
\newcommand{\Hz}{H_{0}}
\newcommand{\Hi}[1]{H_{#1}}

\newcommand{\xktilde}{\tilde\x_k}
\newcommand{\xkbar}{\bar\x_k}
\newcommand{\Bktilde}{\tilde B_k}
\newcommand{\Hkbar}{\bar H_k}

\newcommand{\Pk}{P_k}
\newcommand{\Pmin}{p_{\min}}
\newcommand{\Pmax}{p_{\max}}

\newcommand{\R}{\mathbb{R}}

\newcommand{\norm}[1]{\left\|#1\right\|}
\newcommand{\inner}[2]{\left\langle #1,\,#2\right\rangle}
\newcommand{\Id}{I}

\title{Convergence guarantees for Muon: New parameter regimes and generalizations}

\author{%
  Arthur Castello B. de Oliveira \\
  Department of Electrical and Computer Engineering\\
  Northeastern University\\
  Boston, MA 02115 \\
  \texttt{a.castello@northeastern.edu} \\
  \And
  Dhruv D. Jatkar\\
  Department of Electrical and Computer Engineering\\
  Northeastern University\\
  Boston, MA 02115 \\
  \texttt{jatkar.d@northeastern.edu} \\
  \And
  Guilherme S. Vicinansa\\
  Department of Telecommunications and Control\\
  Escola Politecnica, University of S\~ao Paulo\\
  S\~ao Paulo, SP 05508-010 \\
  \texttt{guilherme.vicinansa@gmail.com} \\
  \And
  Eduardo D. Sontag \\
  Department of Electrical and Computer Engineering and Department of Bioengineering\\
  Northeastern University\\
  Boston, MA 02115 \\
  \texttt{e.sontag@northeastern.edu} \\
}

\begin{document}

\maketitle

\begin{abstract}
  In this paper, we establish the first \new{assymptotic} convergence guarantees for the Muon algorithm \new{through a more accurate proxy for the Newton-Schultz iteration than the typical matrix sign function}. We prove that, for appropriate choices of hyperparameters, the iterates satisfy $\lim_{k\to\infty}\|\nabla f(x_k)\|=0$, and, under a global Polyak-\L{}ojasiewicz condition, that the {sequence of} function values converges linearly. The key insight is that the regularization, implicit in Muon's Newton-Schulz implementation, induces a bounded preconditioner, exposing Muon as a \emph{preconditioned Polyak heavy-ball} method and enabling a classical Lyapunov analysis. This observation naturally motivates applying the same preconditioning structure to the Nesterov gradient evaluation. We formalize this idea by introducing \emph{Muesterov}, a Nesterov-based variant of Muon, and prove that it enjoys the same convergence guarantees, extending the theoretical framework beyond the heavy-ball setting. Numerical experiments on a scalar cross-entropy problem corroborate the theory and illuminate the joint role of the learning rate and the Newton-Schulz regularizer in controlling convergence.  Preliminary numerical simulations training the nanoGPT dataset provide intuition regarding the relevance of the observations in this paper to practical applications.
\end{abstract}



\section{Introduction}

Training large transformer models has driven sustained interest in
optimizers that go beyond the element-wise second-moment 
estimates of Adam~\citep{kingma2014adam} and
AdamW~\citep{loshchilov2018decoupled}.
A number of approaches exploit richer structure in the gradient or its
second-order statistics. Methods such as K-FAC~\citep{martens2015optimizing}
and Sophia~\citep{liu2023sophia} incorporate curvature information, while
algorithms such as Shampoo~\citep{gupta2018shampoo} and
SOAP~\citep{vyas2024soap} leverage matrix-valued preconditioning and
second-moment structure to improve convergence. These methods can yield
substantial gains, but typically require maintaining and manipulating
additional matrix statistics.

A computationally cheaper alternative is to normalise the gradient matrix
directly. Bernstein and Newhouse~\citep{Bernstein2024} formalise this idea
through \emph{modular duality}, showing that for any layer-wise norm, the
steepest descent direction is given by its dual map. In the case of the
spectral (Schatten-$\infty$) norm, this dual corresponds to the
\emph{matrix sign function}~\citep{Higham2008},
$\mathrm{sign}(B) = UV^\top$, where $B = U\Sigma V^\top$ is an SVD.
This can be viewed as a matrix analogue of the element-wise sign used by
methods such as signSGD~\citep{bernstein2018signsgd} and
Lion~\citep{chen2023symbolic}.

\emph{Muon}~\citep{jordan2024muon} turns this principle into a practical
algorithm by approximating the matrix sign via a small number of
Newton-Schulz iterations, which can be implemented efficiently in
low precision and parallelised on modern hardware. Subsequent work has
studied improved matrix-sign iterations and their numerical
properties~\citep{amsel2025polar}, as well as the empirical performance
of Muon in large-scale language model training~\citep{liu2025muon}.
Combined with momentum, this orthogonalisation step leads to updates that
can be interpreted as a form of preconditioned momentum iteration, an
observation made in several recent theoretical
analyses~\citep{li2025note,shen2025convergence,sato2025convergence,chang2025convergence}.

Despite this progress, providing a classical convergence theory for Muon
remains challenging. A central difficulty is that the exact matrix sign
operator produces updates of essentially constant magnitude: for a
rank-$r$ matrix $B$, one has $\|\mathrm{sign}(B)\|_F = \sqrt{r}$,
so the induced preconditioner is not uniformly bounded. This prevents
direct application of standard Lyapunov arguments for preconditioned
methods. As a result, existing analyses typically establish
finite-time or asymptotic \emph{stationarity} guarantees -- often in
terms of averaged or expected gradient norms -- under smoothness and
stochastic assumptions~\citep{li2025note,shen2025convergence,sato2025convergence,kim2026convergence},
with improved rates obtainable under variance
reduction~\citep{chang2025convergence}. These results, while valuable,
do not yield classical full-sequence convergence guarantees for the
iterates themselves.

In this paper, we establish the first asymptotic convergence guarantees for a soft-sign variation of the classical formulation of Muon. Our key observation is that the regularization implicit in this variation of Muon’s Newton-Schulz implementation induces a uniformly bounded preconditioner, exposing it as a preconditioned Polyak heavy-ball method and enabling a classical Lyapunov analysis. Under suitable hyperparameter choices, we prove that the iterates satisfy $\lim_{k\to\infty}\|\nabla f(x_k)\|=0$ and, under a global Polyak--Łojasiewicz condition, that the function values converge linearly.

Motivated by this interpretation and the known advantages of Nesterov over Polyak heavy-ball \citep{lessard2016analysis,shi2021understanding}, we additionally introduce Muesterov, a Nesterov-based variant of Muon obtained by applying the same normalization structure to extrapolated gradients. We prove that Muesterov satisfies analogous convergence guarantees, and compare it to Muon on scalar synthetic simulations.

Finally, we present synthetic and transformer-based training experiments illustrating how the learning rate and the Newton-Schulz regularizer jointly control the transition between oscillatory sign-like dynamics and stable convergence. In particular, the fixed-batch nanoGPT experiments illustrate that the same regularization mechanisms identified by the theory and synthetic simulations remain qualitatively visible in transformer training.

All proofs are provided in the technical appendix, and all code used to generate the numerical results is available in \cite{why_muon_works_2026}. 

\section{Problem Setup and Preliminary Analysis}
\label{sec:ProbForm}


The set of natural and real numbers are denoted $\mathbb{N}$
and $\mathbb{R}$ respectively. Let $\|\cdot\|$ and
$\langle\cdot,\cdot\rangle$ denote the Frobenius norm and inner product
if taken over matrices, and the $\ell_2$ norm and inner product if taken
over vectors. Given $m,n\in\mathbb{N}$ and a differentiable function
$f:\mathbb{R}^{m\times n}\to\mathbb{R}$, let
$f^*:=\min_{x\in\mathbb{R}^{m\times n}}f(x)$ denote its minimum value,
assumed to be well defined and finite throughout. 

A function $\loss$ is said to be \emph{proper} if the pre-images of compact sets are compact. Furthermore, a function $\loss$ is said to be \emph{$L$-smooth} for some $L>0$ if for all $x,y\in\PSp$ it holds that
\begin{equation}
    \label{eq:Lsmoothness-def}
    \|\dLoss[x]-\dLoss[y]\|\leq L\|x-y\|.
\end{equation}
Finally, a function $\loss$ is said to satisfy a \emph{global Polyak-\L{}ojasiewicz inequality} (P\L{}I) with constant $\mu>0$ if for every $x\in\re[m\times n]$ it holds that
\begin{equation}
    \label{eq:PLI-def}
    \|\nabla\!\loss(x)\|^2\geq\mu(\loss(\x)-\loss^*). 
\end{equation}
For any matrix $x\in\mathbb{R}^{m\times n}$, let $x = U\Sigma V^\top$ 
be a valid singular value decomposition (SVD) of $x$. Then the 
\emph{matrix sign function}~\citep{Higham2008} is given by $\mathrm{sign}(x) := UV^\top$, and is well defined up to the choice of singular vectors corresponding to zero singular values.   

\subsection{Equivalent Muon formulations}
We now introduce three different formulations for Muon: the practical implementation as described by \cite{jordan2024muon}; an idealized sign-based model typically considered in the literature; and a regularized analytical proxy to be used for convergence analysis. We will discuss the equivalence between these three algorithms and why the distinctions matter.

For $n\ge m\in\mathbb{N}$, let $\loss:\PSp\to\re$ be an $L$-smooth function that satisfies a global Polyak-\L{}ojasiewicz inequality with constant $\mu>0$. Furthermore, let $\loss$ be proper and bounded below, with minimum value $\loss^*:=\loss(\xs)$ attained at some finite point $\xs\in\PSp$. Consider, then, the following optimization problem.
\begin{equation}
    \label{eq:optprob-def}
    \underset{\x\in\PSp}{\mathrm{minimize}}\qquad f(x).
\end{equation}
Many methods exist for solving \eqref{eq:optprob-def}, however Muon \citep{jordan2024muon} has rapidly gained widespread popularity for the training of the hidden-layers of transformer models. Muon (henceforth called \emph{canonical Muon}, or simply \emph{Muon}) proposes finding the solution of \eqref{eq:optprob-def} by the following iteration
\begin{subequations}
    \label{eq:Muon-Orig}
    \begin{align}
        \Bk &= \beta \Bm+\dLoss[\xm]\\
        \Ok &= \NS(\Bk)\\
        \xk&=\xm-\alpha\Ok,
    \end{align}
\end{subequations}
where $\NS(x)$ denotes the application of a Newton–Schulz iteration, empirically tuned for Muon applications, to the normalized matrix $x/(\|x\|_F+\delta)$. 

Prior work \citep{jordan2024muon,Bernstein2024,amsel2025polar} shows that Muon's Newton-Schulz algorithm aims at approximating the matrix sign function (as defined in \citep{Higham2008}). In other words, if $U_k\Sigma_k V_k^\top$ is a valid singular value decomposition of $\Bk$, then $\Ok\approx U_kV_k^\top=:\sign(\Bk)$. From this observation, an idealized version of Muon (henceforth called \emph{ideal Muon}) can be written as 
\begin{subequations}
    \label{eq:Muon-Ideal}
    \begin{align}
        \Hk &= \beta \Hm+(1-\beta)\dLoss[\xm]\\
        \xk&=\xm-\alpha \sign(\Hk).
    \end{align}
\end{subequations}
Furthermore, the ideal Muon is equivalent to the canonical Muon in the following sense.
\begin{proposition}
    \label{prop:Muon-OrigIdeal-Equivalence}
    Let $\NS(\Bk)=\sign(\Bk)$. Also, for any $x_0\in\PSp$ let $\{\xktilde,\Bktilde\}$ and $\{\xkbar,\Hkbar\}$ be the sequences generated by \eqref{eq:Muon-Orig} and \eqref{eq:Muon-Ideal} respectively, each initialized at $\{x_0, B_0\}$ and $\{x_0, \Hz\}$ with $B_0=(1-\beta)^{-1}\Hz$. Then, for every $k\in\mathbb{N}$ it holds that $\xktilde=\xkbar$ and $\Bktilde=(1-\beta)^{-1}\Hkbar$.
\end{proposition}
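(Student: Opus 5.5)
The proof will go by induction on $k$, carrying the joint invariant $\xktilde=\xkbar$ and $\Bktilde=(1-\beta)^{-1}\Hkbar$. The base case $k=0$ holds by the choice of initial conditions, since both sequences start at $x_0$ and $B_0=(1-\beta)^{-1}\Hz$. The one fact used beyond the recursions themselves is that the matrix sign function is invariant under positive scaling: for every $c>0$ and every matrix $B$,
\[
\sign(cB)=\sign(B).
\]
This follows because if $B=U\Sigma V^\top$ is an SVD, then $cB=U(c\Sigma)V^\top$ is also an SVD with the same singular vectors. The ambiguity in the choice of singular vectors for zero singular values is unaffected by scaling, so any valid choice for $B$ is also valid for $cB$. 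We fix the same choice in both sequences.

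For the inductive step, assume the invariant holds at index $k-1$. The momentum update in \eqref{eq:Muon-Orig} gives
\[
\Bktilde=\beta\tilde B_{k-1}+\dLoss[\tilde x_{k-1}].
\]
Substituting the inductive hypothesis, this becomes
\[
\Bktilde=(1-\beta)^{-1}\bigl(\beta\bar H_{k-1}+(1-\beta)\dLoss[\bar x_{k-1}]\bigr)=(1-\beta)^{-1}\Hkbar,
\]
where the last equality is the momentum update of \eqref{eq:Muon-Ideal}. With $\NS=\sign$, scale invariance (taking $c=(1-\beta)^{-1}>0$, which needs $\beta\in[0,1)$) gives $\sign(\Bktilde)=\sign(\Hkbar)$. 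The position updates then give
\[
\xktilde=\tilde x_{k-1}-\alpha\sign(\Bktilde)=\bar x_{k-1}-\alpha\sign(\Hkbar)=\xkbar,
\]
which closes the induction.

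There is no real obstacle here. The only points that need care are stating that $\beta<1$, so the scaling factor is positive and finite, and handling the non-uniqueness of $\sign$ at rank-deficient matrices. For the latter, either adopt a consistent selection convention, or note that the normalization $x/(\|x\|_F+\delta)$ inside $\NS$ is itself a positive scaling and therefore does not change the sign either.
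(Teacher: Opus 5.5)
Your proof is correct and follows essentially the same route as the paper. Both rest on the identity $\bar H_k=(1-\beta)\tilde B_k$ together with $\sign((1-\beta)B)=\sign(B)$: the paper gets the identity by unrolling the momentum recursion in closed form, while you carry it through a joint induction on $(x_k,B_k)$, which makes explicit that the two methods see the same gradients and also records the conditions $\beta<1$ and a consistent selection of $\sign$ at rank-deficient matrices that the paper leaves implicit.
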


The proposition shows that the two different formulations induce identical trajectories in the parameter space, and up to a rescaling of the momentum. 
%
%
The $(1-\beta)$ factor is introduced to match the standard heavy-ball form, in which the gradient appears with weight $(1-\beta)$. This scaling does not affect Proposition~\ref{prop:Muon-OrigIdeal-Equivalence}, as it only rescales the momentum variable while leaving the induced updates unchanged. Crucially, it places \eqref{eq:Muon-Ideal} in the form of a preconditioned Polyak heavy-ball iteration, which will be exploited in the next section.

%
Note that Ideal Muon does not converge asymptotically under constant $\alpha$ and $\beta$, since $\|\sign(\Hk)\|_F=\sqrt{\rank(\Hk)}$ prevents vanishing step sizes near critical points. Existing analyses therefore establish $\varepsilon$-stationarity or averaged-gradient guarantees rather than pointwise convergence \citep{shen2025convergence,li2025note}. To address the issue of absence of vanishing step size near critical points, we introduce the soft-sign normalization.
\begin{equation}
    \label{eq:SoftSignDef}
    \SSat(x,\eps):=\frac{x}{\sqrt{x^2+\eps^2}}
\end{equation}
for any $\eps>0$. The matrix-valued soft sign $\SSat:\PSp\to\PSp$ is, then, simply the scalar soft sign applied to the singular values of the input matrix. That is, let $\x=U\Sigma V^\top$ be a valid SVD of a matrix $\x\in\PSp$, then
\begin{equation}
    \SSat(\x,\eps):=U\SSat(\Sigma,\eps)V^\top = (\x \x^\top+\eps^2 I)^{-1/2}\x,
\end{equation}
which is well-defined for all $\x \in \PSp$. This allows the formulation of the following iteration (henceforth called \emph{Soft Muon})
\begin{subequations}
    \label{eq:Muon-Soft}
    \begin{align}
        \Hk &= \beta \Hm+(1-\beta)\dLoss[\xm]\\
        \xk&=\xm-\alpha(\Hk\Hk^\top+\eps^2 I)^{-1/2}\Hk,
    \end{align}
\end{subequations}
which can also be shown to be equivalent to Muon in the following sense.

\begin{proposition}
    \label{prop:Muon-OrigSoft-Equiv}
    Let $\NS(\Bk)=\SSat(\Bk,\eps)$ for some $\eps>0$. Also, for any $x_0\in\PSp$ let $\{\xktilde,\Bktilde\}$ and $\{\xkbar,\Hkbar\}$ be the sequences generated by \eqref{eq:Muon-Orig} and \eqref{eq:Muon-Soft} respectively, with each initialized at $\{x_0, B_0\}$ and $\{x_0, \Hz\}$ with $B_0=(1-\beta)^{-1}\Hz$. Further assume \eqref{eq:Muon-Soft} generates its sequence with constant $\eps(1-\beta)$ instead of $\eps$. Then, for every $k\in\mathbb{N}$ it holds that $\xktilde=\xkbar$ and $\tilde \Bk=(1-\beta)^{-1}\Hkbar$.
\end{proposition}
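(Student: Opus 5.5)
The argument is an induction on $k$. The hypothesis is that $\xktilde=\xkbar$ and $\Bktilde=(1-\beta)^{-1}\Hkbar$; the base case $k=0$ holds by the assumed initialization. The one fact needed is a scaling identity for the soft sign: for any $c>0$ and any $x\in\PSp$,
\begin{equation*}
    \SSat(cx,c\eps)=(c^2xx^\top+c^2\eps^2 I)^{-1/2}cx=(xx^\top+\eps^2 I)^{-1/2}x=\SSat(x,\eps).
\end{equation*}
This follows because $(c^2 M)^{-1/2}=c^{-1}M^{-1/2}$ for a symmetric positive definite $M$. Equivalently, on singular values, $c\sigma/\sqrt{c^2\sigma^2+c^2\eps^2}=\sigma/\sqrt{\sigma^2+\eps^2}$.

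For the inductive step, assume the claim at step $k-1$. The momentum updates are linear, so
\begin{equation*}
    \Bktilde=\beta\tilde B_{k-1}+\dLoss[\tilde x_{k-1}]=(1-\beta)^{-1}\bigl(\beta\bar H_{k-1}+(1-\beta)\dLoss[\bar x_{k-1}]\bigr)=(1-\beta)^{-1}\Hkbar,
\end{equation*}
using $\tilde x_{k-1}=\bar x_{k-1}$. For the parameter update, canonical Muon with $\NS=\SSat(\cdot,\eps)$ gives $\xktilde=\tilde x_{k-1}-\alpha\SSat(\Bktilde,\eps)$. Substituting $\Bktilde=(1-\beta)^{-1}\Hkbar$ and applying the scaling identity with $c=1-\beta$ gives
\begin{equation*}
    \SSat\bigl((1-\beta)^{-1}\Hkbar,\eps\bigr)=\SSat\bigl(\Hkbar,(1-\beta)\eps\bigr).
\end{equation*}
This is exactly the direction Soft Muon uses when run with regularizer $(1-\beta)\eps$. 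Together with $\tilde x_{k-1}=\bar x_{k-1}$, it yields $\xktilde=\xkbar$ and closes the induction.

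The only step that needs care is the scaling identity with the shifted regularizer, which is why the statement requires Soft Muon to run with $\eps(1-\beta)$. One should also check that the formula is well defined, with no division issue, for every $x$. This holds because $xx^\top+\eps^2I\succ0$ whenever $\eps>0$, so the inverse square root always exists. The remaining steps are routine linear algebra, mirroring Proposition~\ref{prop:Muon-OrigIdeal-Equivalence}, where the scale invariance is instead $\sign(cx)=\sign(x)$.
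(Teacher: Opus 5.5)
Your proof is correct and rests on the same two ingredients as the paper's: the linear relation $H_k=(1-\beta)B_k$ between the momenta, and the scale invariance $\SSat(cx,c\eps)=\SSat(x,\eps)$ with $c=1-\beta$. The paper writes the momentum sum in closed form instead of inducting, and your explicit induction makes rigorous the step it leaves implicit, namely that both recursions are driven by the same iterates $x_i$.
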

%

Note that neither Proposition \ref{prop:Muon-OrigIdeal-Equivalence} nor \ref{prop:Muon-OrigSoft-Equiv} assert that the Newton-Schulz polynomial is exactly equal to either the matrix sign or the soft sign. Rather, they show that once a particular surrogate for the Newton-Schulz map is chosen, the resulting canonical formulation can be rewritten exactly in the corresponding heavy-ball form.


Unlike ideal Muon, the soft-sign formulation permits vanishing updates near critical points, enabling asymptotic convergence analysis, as we will show later in the paper.

\subsection{Why so many Muons?}

\begin{figure}
    \centering
    \includegraphics[width=0.2\linewidth]{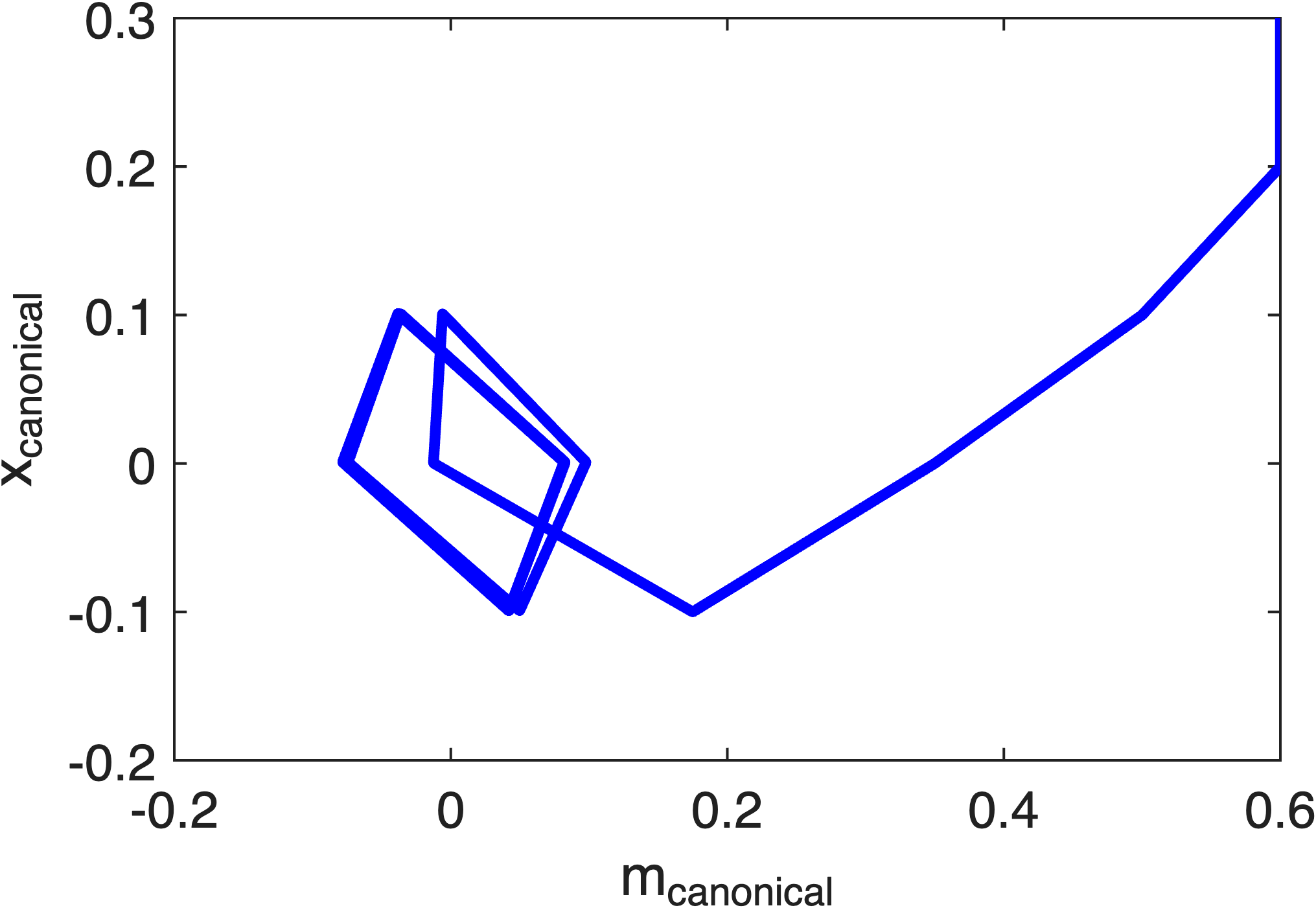}
    \includegraphics[width=0.2\linewidth]{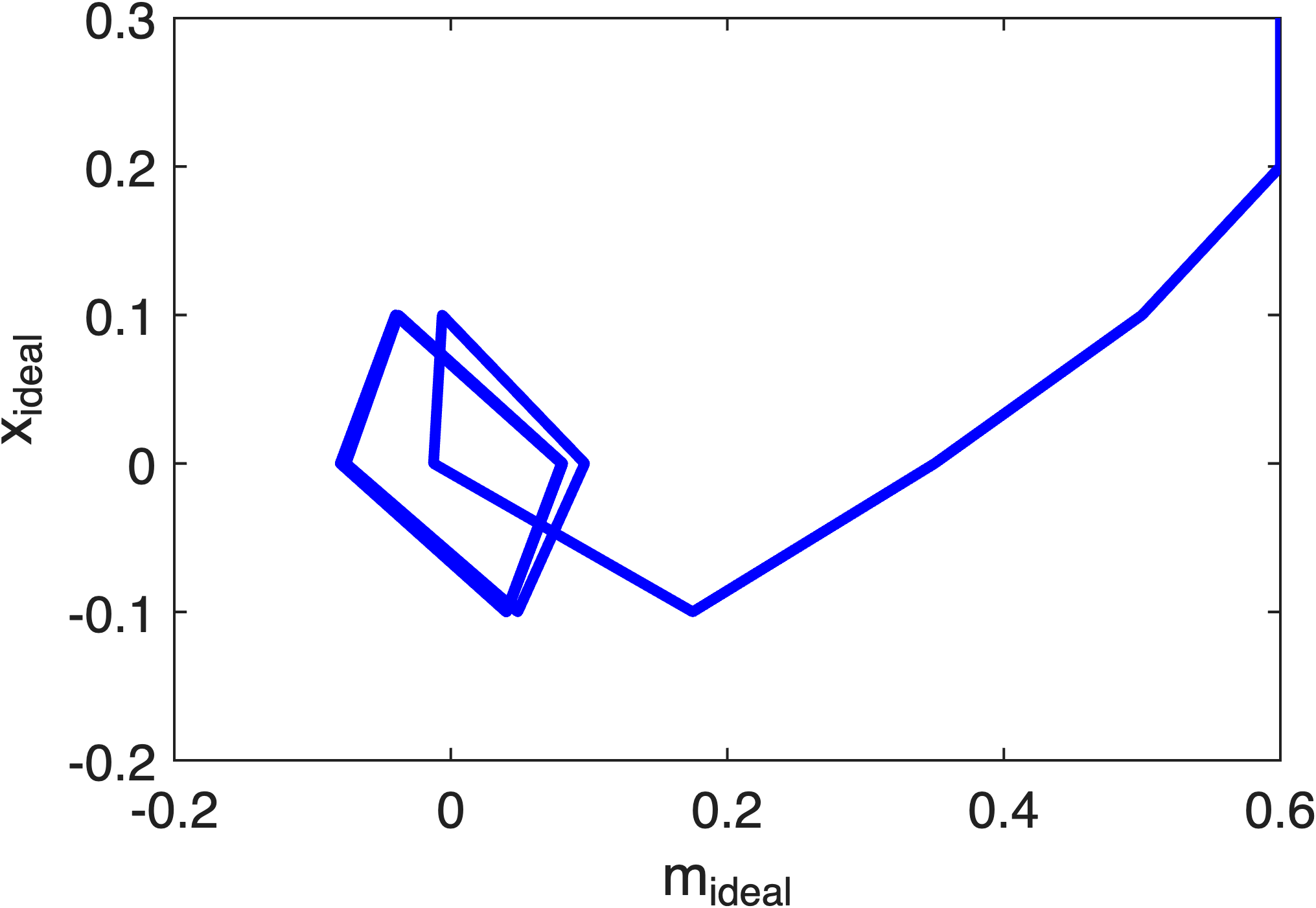}
    \includegraphics[width=0.2\linewidth]{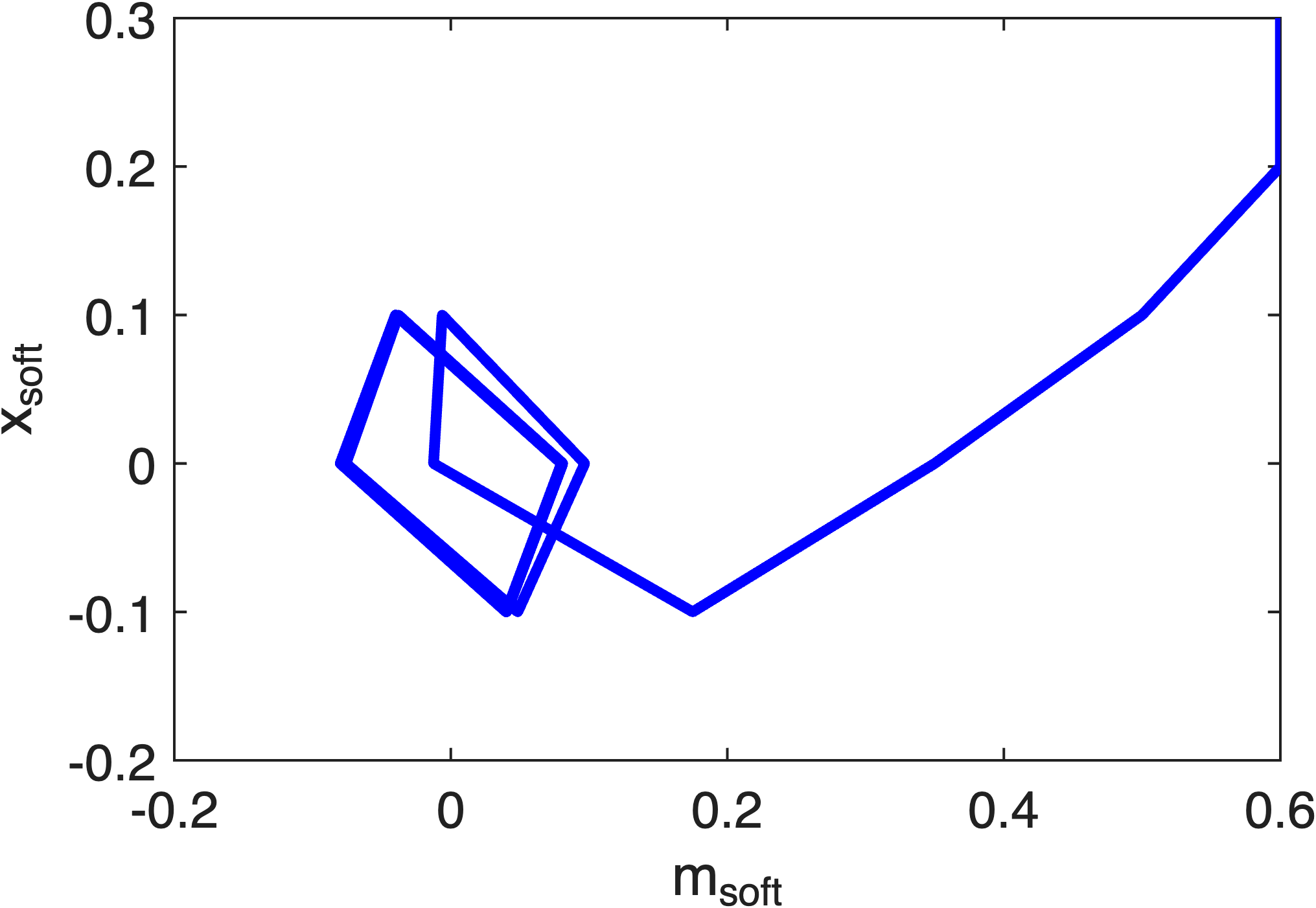}
    \includegraphics[width=0.2\linewidth]{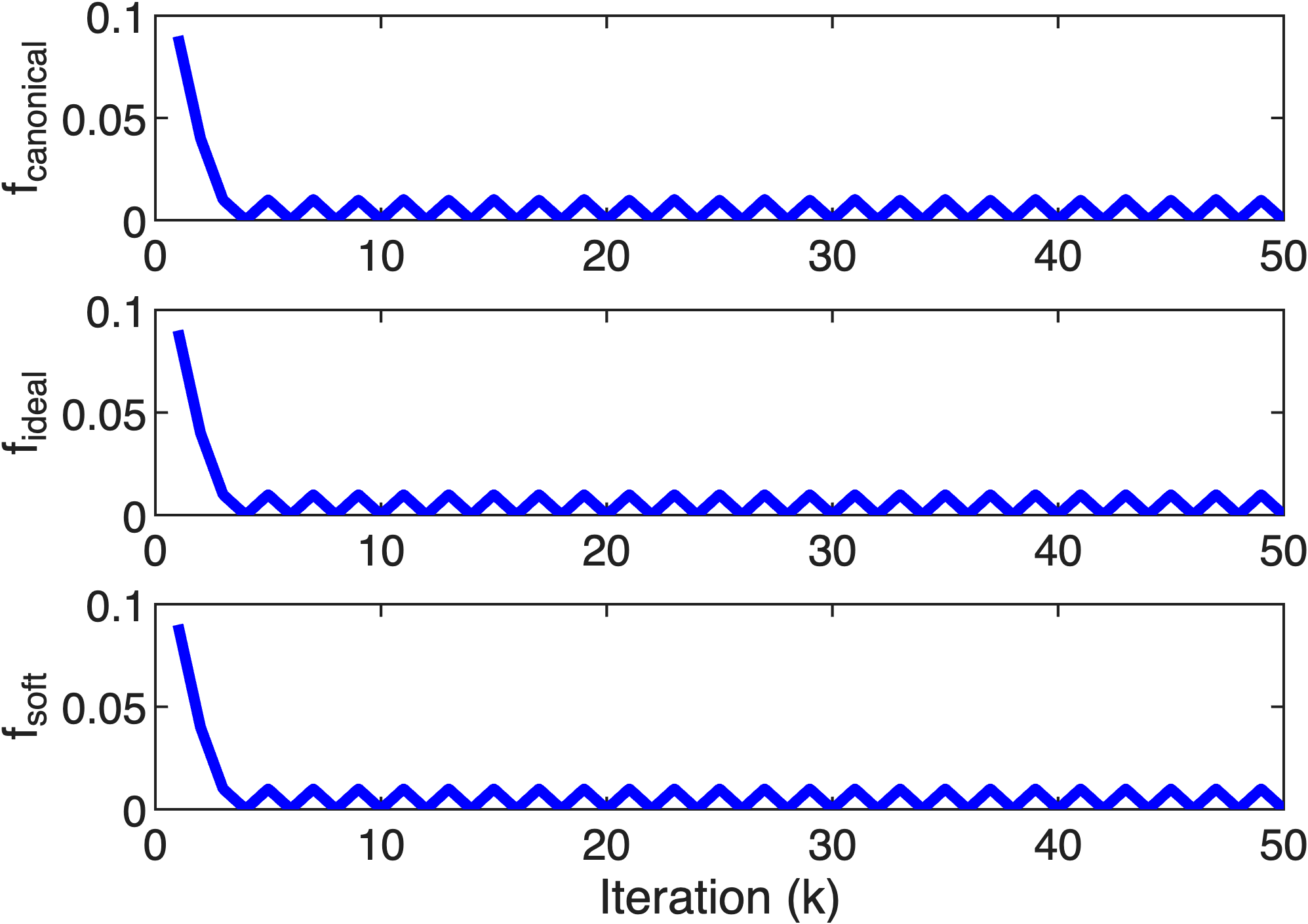}\\
    \includegraphics[width=0.2\linewidth]{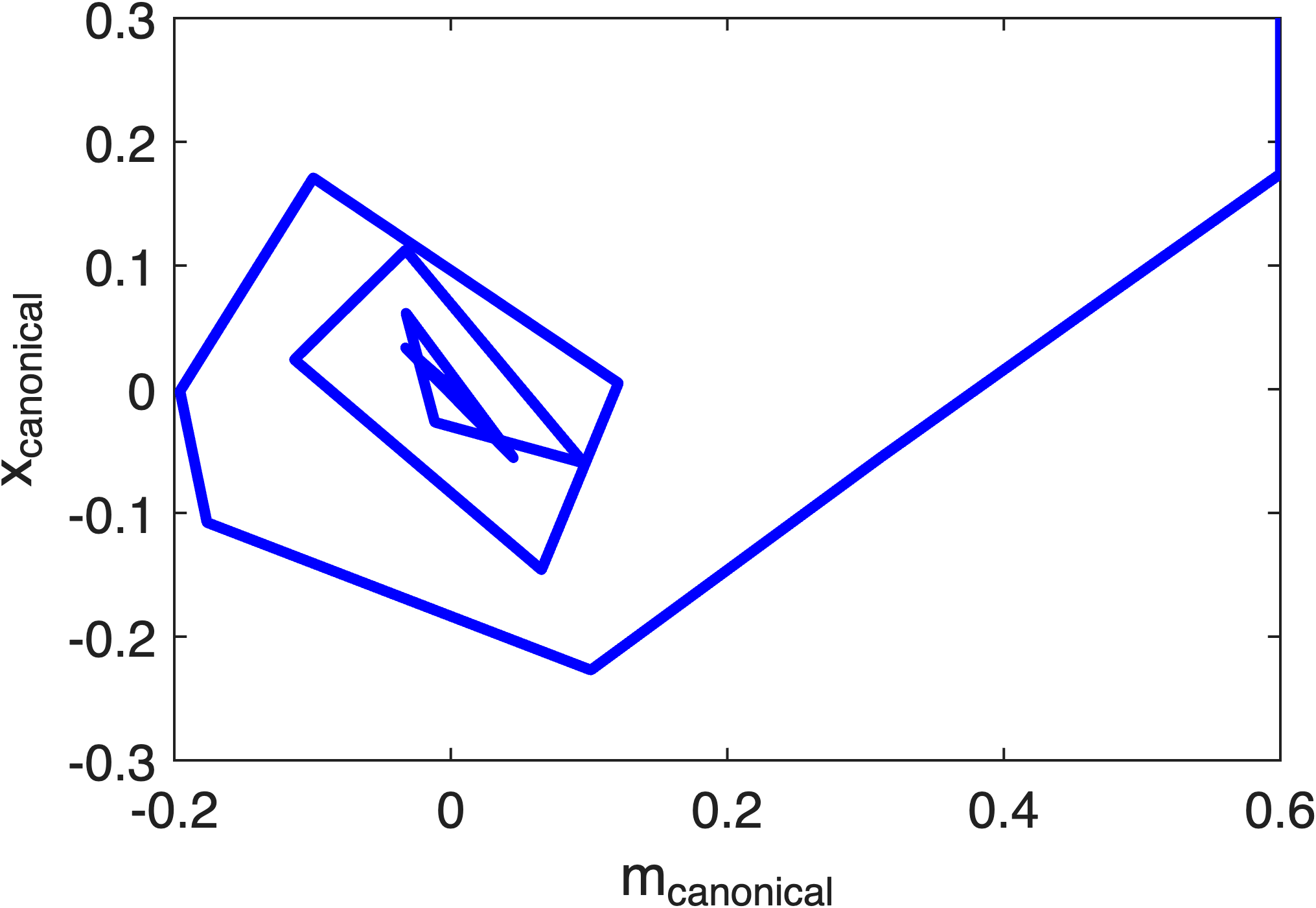}
    \includegraphics[width=0.2\linewidth]{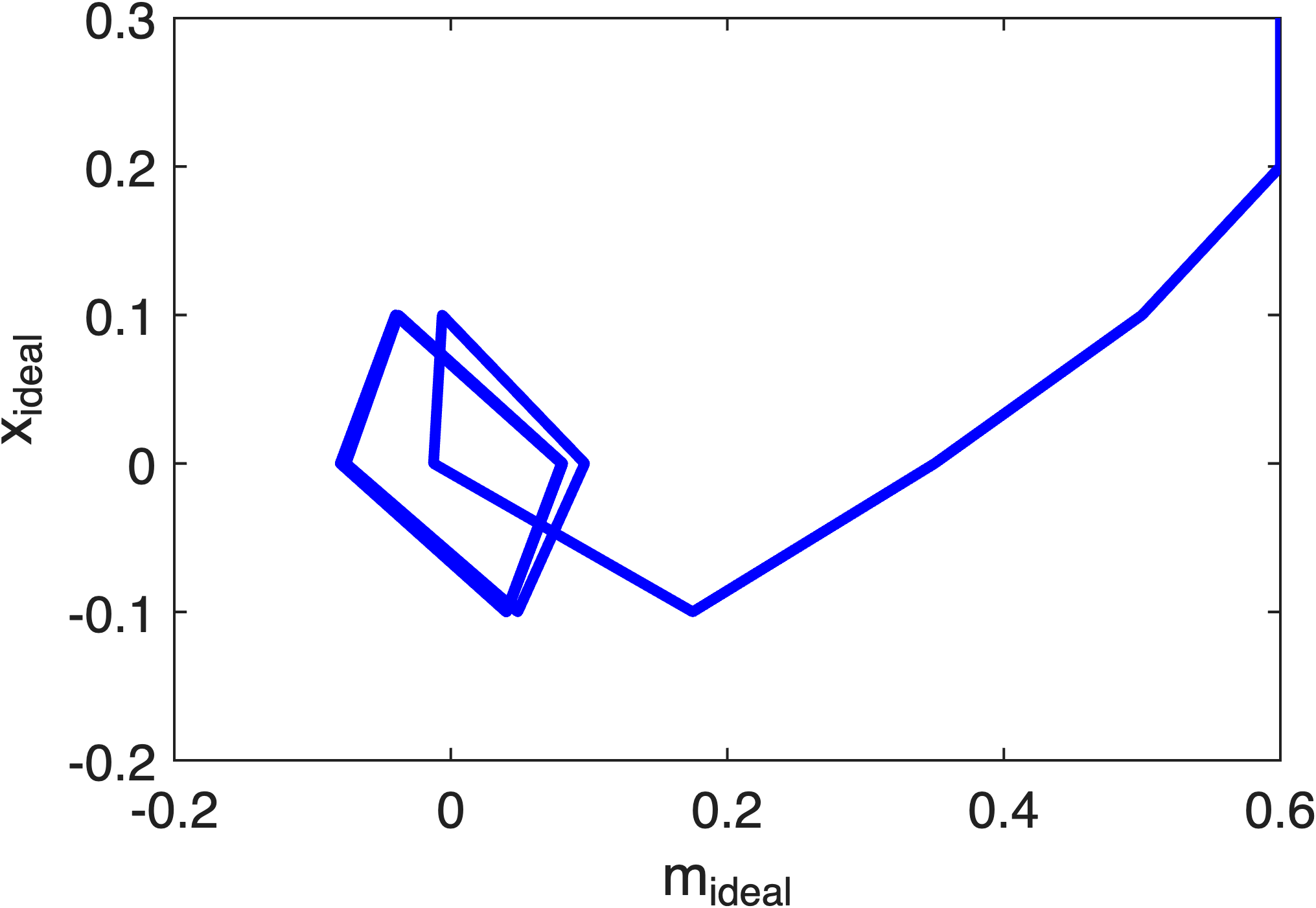}
    \includegraphics[width=0.2\linewidth]{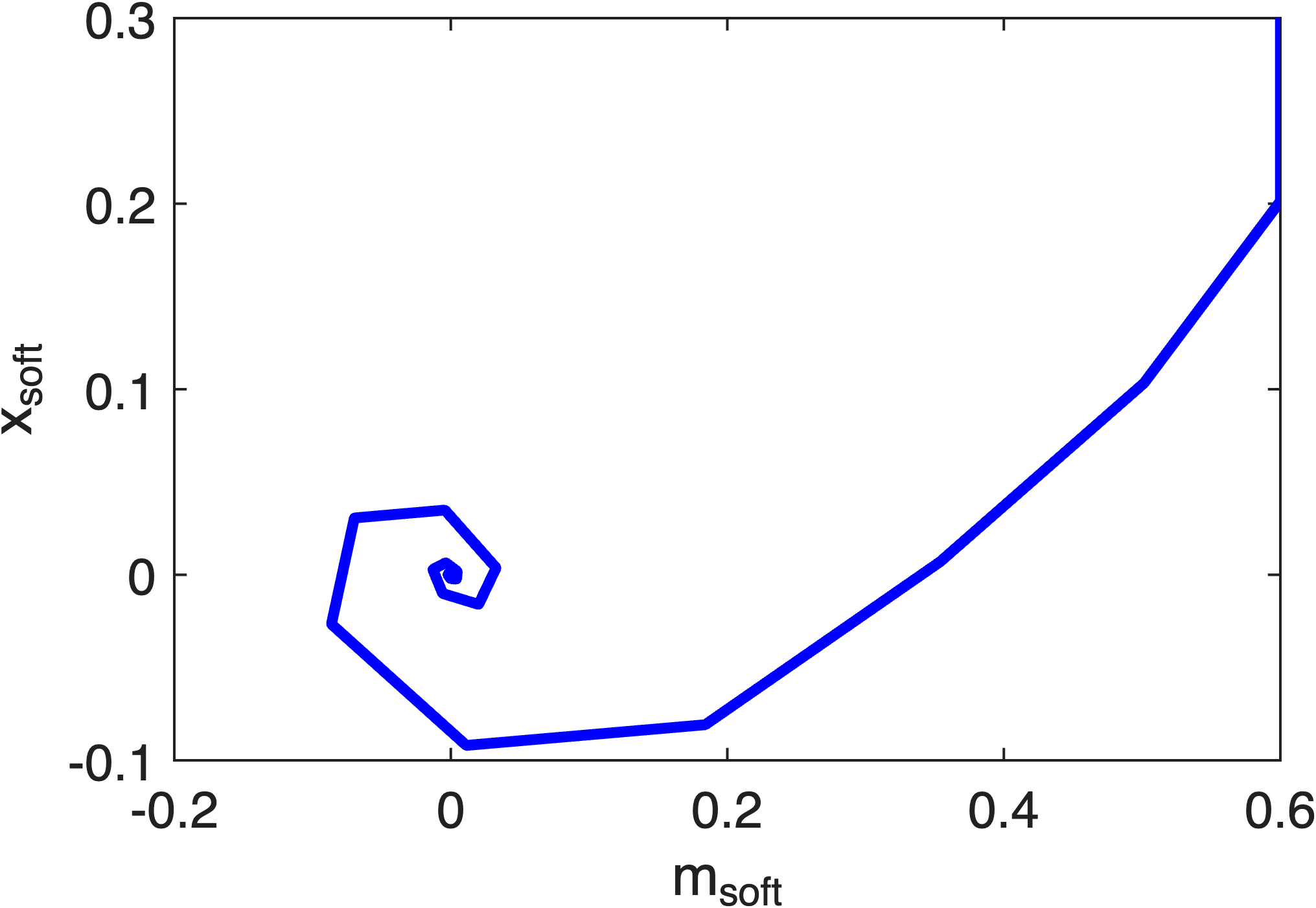}
    \includegraphics[width=0.2\linewidth]{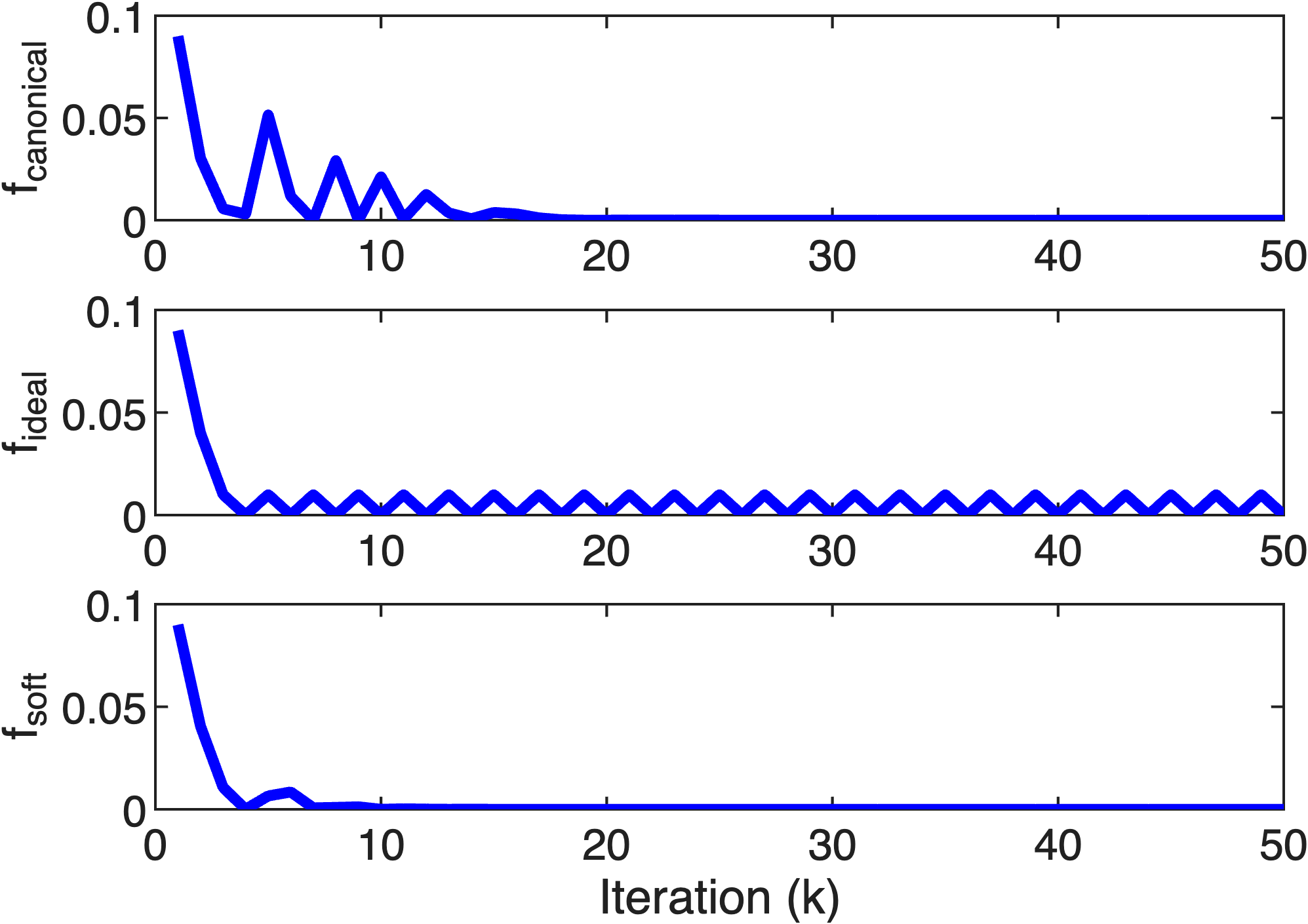}\\
    ~~~~(a)\qquad\qquad\qquad~~~~(b)\quad\qquad\qquad\qquad(c)\qquad\qquad\qquad~~~(d)
    \caption{Simulations for $f(x)=x^2$ with $\alpha=0.1$ and $\beta=0.5$. Columns (a)-(c) show the phase portrait ($x_k\times H_k$) for Canonical Muon \eqref{eq:Muon-Orig}, Ideal Muon \eqref{eq:Muon-Ideal}, and Soft Muon \eqref{eq:Muon-Soft} respectively; (d) shows the loss across iterations. The first row uses $\eps=10^{-7}$ and $\delta = k_{\delta\eps} 10^{-7} = 484.876 10^{-7}$, where $\delta$ denotes the regularizer inside the Newton-Schulz algorithm. To generate the second row, the values of $\delta$ and $\eps$ were increased until Canonical Muon and Soft Muon respectively started presenting a converging trajectory. Ideal Muon has no such regularizer and instead always settles into a periodic orbit around $x^*=0$.}
    \label{fig:QuadraticExample}
\end{figure}

Propositions~\ref{prop:Muon-OrigIdeal-Equivalence} and \ref{prop:Muon-OrigSoft-Equiv} establish conditions in which Canonical Muon \eqref{eq:Muon-Orig}, Ideal Muon \eqref{eq:Muon-Ideal}, and Soft Muon \eqref{eq:Muon-Soft} generate identical parameter trajectories up to a rescaling of the momentum variable. Figure~\ref{fig:QuadraticExample} illustrates how these formulations begin to differ once regularization becomes non-negligible.
The first row corresponds to a weakly regularized regime, in which all three methods behave nearly identically and fail to converge asymptotically. In contrast, the second row shows that increasing $\delta$ and $\eps$ sufficiently induces vanishing step sizes near the optimum: Canonical and Soft Muon now converge toward $x^\star=0$, while Ideal Muon continues to orbit the minimizer. Thus, the three formulations differ only through the effect of regularization -- when the regularization is negligible they coincide, while sufficiently strong regularization restores asymptotic convergence.

Although Canonical and Soft Muon exhibit the same qualitative transition, their trajectories may differ quantitatively for small values of $\delta$ and $\eps$, reflecting the higher-order structure of the Newton-Schulz polynomial. Throughout the paper, we align the local behavior of the two formulations near the origin by matching their scalar first-order expansions, yielding the relation $\delta \approx 484.876\,\eps=:k_{\delta\eps}\,\eps$ used in the simulations.
Finally, both \eqref{eq:Muon-Ideal} and \eqref{eq:Muon-Soft} can be interpreted as preconditioned variants of the Polyak heavy-ball method. The next section develops a convergence theory for preconditioned heavy-ball iterations and specializes it to Soft Muon.

\section{Muon and the preconditioned Polyak heavy-ball}
\label{sec:Muon}

The soft-sign formulation exposes Muon as a preconditioned heavy-ball method, allowing classical convergence analysis through the induced regularized preconditioner. The \emph{generic preconditioned heavy-ball iteration} can be written as
\begin{subequations}\label{eq:PrecHeavyBall}
\begin{align}
  \Hk &= \beta \Hm + (1-\beta)\dLoss[\xm], \\
  \xk &= \xm - \alpha \Pk^{-1} \Hk,
\end{align}
\end{subequations}
where $P_k \in \mathbb{R}^{m \times m}$ is a positive definite preconditioning matrix, and $P_k^{-1} H_k$ defines the descent direction. For this formulation, the following result can be stated.
\begin{theorem}\label{thm:PrecHeavyBall}
Consider the preconditioned heavy-ball iteration presented in~\eqref{eq:PrecHeavyBall} for solving \eqref{eq:optprob-def}, and assume
the preconditioning matrix $\Pk$ is uniformly bounded  -- \ie~there exist $\Pmax>\Pmin>0$ such that $\Pmin I\preceq \Pk\preceq \Pmax I$ for all $k\in\mathbb{N}$.
Then there exist a constant $\beta^\star\in(0,1)$ and a positive function
$\alpha^\star:[0,\beta^\star)\to\R_{>0}$ such that for every $\beta\in[0,\beta^\star)$
and every $\alpha\in(0,\alpha^\star(\beta))$, it holds that
\begin{equation}
  \lim_{k\to\infty}\norm{\dLoss[\xk]} = 0, \label{eq:convergence}
\end{equation}
that is, the method converges to a first-order stationary point in the classical sense.
\end{theorem}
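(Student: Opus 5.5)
The plan is a Lyapunov (energy) argument adapted to the preconditioned heavy-ball. First I rewrite \eqref{eq:PrecHeavyBall} in terms of the step $d_k:=\xk-\xm=-\alpha\Pk^{-1}\Hk$. From $\Hk=\beta\Hm+(1-\beta)\dLoss[\xm]$ we get $\Hk=(1-\beta)\dLoss[\xm]+\beta\Hm$, with $\norm{\Hm}\le \Pmax\norm{d_{k-1}}/\alpha$ and $\norm{\Pk^{-1}}\le 1/\Pmin$. The candidate Lyapunov function is
\begin{equation*}
V_k:=\loss(\xk)-\lmin+c\,\norm{\xk-\xm}^2,
\end{equation*}
with $c>0$ to be chosen depending on $\alpha,\beta,\Pmin,\Pmax,L$. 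Since $\loss$ is bounded below, $V_k\ge 0$.

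\textbf{Step 1 (descent inequality).} By $L$-smoothness,
\begin{equation*}
\loss(\xk)\le\loss(\xm)-\alpha\inner{\dLoss[\xm]}{\Pk^{-1}\Hk}+\tfrac{L}{2}\norm{d_k}^2 .
\end{equation*}
Substituting $\Hk$ splits the inner product into two terms. The gradient term gives $-\alpha(1-\beta)\inner{g}{\Pk^{-1}g}\le-\alpha(1-\beta)\Pmax^{-1}\norm{g}^2$ with $g=\dLoss[\xm]$. The momentum term is $-\alpha\beta\inner{g}{\Pk^{-1}\Hm}$. I bound it by Young's inequality, using $\norm{\Hm}\le \Pmax\norm{d_{k-1}}/\alpha$:
\begin{equation*}
\alpha\beta\,|\inner{g}{\Pk^{-1}\Hm}|\le \tfrac{\alpha(1-\beta)}{2\Pmax}\norm{g}^2+C_1\beta^2\alpha^{-1}\norm{d_{k-1}}^2 ,
\end{equation*}
where $C_1$ depends only on $\Pmax/\Pmin$.

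\textbf{Step 2 (bounding the step).} I also need $\norm{d_k}^2\le 2\alpha^2\Pmin^{-2}\big((1-\beta)^2\norm{g}^2+\beta^2\Pmax^2\alpha^{-2}\norm{d_{k-1}}^2\big)$.

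\textbf{Step 3 (combining).} Putting these together gives
\begin{equation*}
V_k-V_{k-1}\le -a\norm{g}^2-b\norm{d_{k-1}}^2 ,
\end{equation*}
where $a\approx \alpha(1-\beta)/(2\Pmax)-O(\alpha^2(L+c))$. The coefficient $b$ must be positive:
\begin{equation*}
b = c-(L/2+c)\cdot 2\beta^2\Pmax^2\Pmin^{-2}-C_1\beta^2/\alpha>0 .
\end{equation*}
This forces $\beta$ small enough that $2\beta^2\Pmax^2/\Pmin^2<1$; this is the source of $\beta^\star$. I then pick $c$ of order $\beta^2/\alpha$ (plus $L$) to absorb the remaining terms. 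Finally $\alpha<\alpha^\star(\beta)$ is chosen so that $a>0$ after the $O(\alpha^2 c)$ terms, which are $O(\alpha\beta^2+\alpha^2L)$.

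\textbf{Step 4 (conclusion).} Telescoping gives $\sum_k a\norm{\dLoss[\xm]}^2\le V_0<\infty$, hence $\norm{\dLoss[\xk]}\to 0$. If the $\beta=0$ case needs $c$ tuned differently, it is handled by the same estimates, where it reduces to preconditioned gradient descent with $\alpha<2\Pmin^2/(L\Pmax)$-type bounds.

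\textbf{Main obstacle.} The hard part is the cross term $\inner{g}{\Pk^{-1}\Hm}$. Because $\Pk$ varies with $k$ and is not symmetric-compatible with $P_{k-1}$, the usual heavy-ball identity does not telescope. I therefore rely on crude Young bounds, which is why only small $\beta$ is covered. The balancing of $c$ against $\alpha$ must be checked carefully: $c\sim\beta^2/\alpha$ makes $c\alpha^2$ terms of order $\alpha\beta^2$, which is fine when $\beta$ is small relative to $(1-\beta)\Pmin^2/\Pmax^3$. That constraint also feeds into $\beta^\star$.
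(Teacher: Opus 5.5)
Your argument is correct, but it uses a different Lyapunov function from the paper, and that choice changes the threshold you get.

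The paper uses $f(x_k)-f^\star+\frac{1}{4L}\norm{e_k}^2$, where $e_k=H_k-\nabla f(x_k)$ is the momentum lag. Its recursion $e_{k+1}=\beta e_k+(\nabla f(x_k)-\nabla f(x_{k+1}))$ never passes through $P_k$. The lag is driven only by $O(\alpha)$ gradient increments, and the cross term has coefficient $\alpha/\Pmin$. A Sylvester check on the resulting $2\times 2$ form therefore gives the universal threshold $\beta^\star=1/\sqrt2$, independent of $\Pmin$ and $\Pmax$.

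Your energy $f(x_k)-f^\star+c\norm{x_k-x_{k-1}}^2$ instead has to recover $H_{k-1}=-P_{k-1}(x_{k-1}-x_{k-2})/\alpha$, which is exactly the obstacle you identify. This costs a factor $\kappa:=\Pmax/\Pmin$ and a $1/\alpha$, forces $c\sim\beta^2/\alpha$, and makes the $c\alpha^2$ term comparable to the descent term.

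Carrying your Steps 1--3 through with your Young split and the smallest $c$ giving $b\ge 0$, the requirement $a>0$ as $\alpha\to 0$ becomes $\beta^2<1/(2\kappa^2(1+\kappa^2))$. This already implies your other condition $2\beta^2\kappa^2<1$. So your threshold is $\beta^\star=1/(\kappa\sqrt{2(1+\kappa^2)})\approx\kappa^{-2}/\sqrt2$. Since the theorem only asks for some $\beta^\star\in(0,1)$, this suffices.

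Two small corrections. First, your $C_1$ is really $\Pmax^3/(2\Pmin^2(1-\beta))$, not a function of $\Pmax/\Pmin$ alone, and the correct scale-free condition is $\beta\lesssim(\Pmin/\Pmax)^2$ rather than your heuristic ``$(1-\beta)\Pmin^2/\Pmax^3$''. Second, Step 1 at $k=1$ needs $H_0$ to arise from a previous step, so telescope from $V_1$ rather than $V_0$.

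What the paper's route buys is the $\kappa$-free threshold. For Soft Muon, $\kappa=\sqrt{\Gamma^2+\eps^2}/\eps$ is large for small $\eps$ and depends on the sublevel set through $\Gamma(C)$. Your version would therefore give a small, $C$-dependent $\beta^\star$ and complicate the quantifier order in Corollary~\ref{cor:muon-convergence}, whereas the paper can quote $1/\sqrt2$ directly. Your route buys the textbook heavy-ball energy with a single tunable weight and avoids the cubic $Q(\alpha)$ root analysis.
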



Furthermore, linear convergence follows naturally from this and the P\L{} condition, as shown next.


\begin{corollary}\label{cor:MuonSoft-LinearConvergence}
    Let $f$ satisfy a global Polyak--\L{}ojasiewicz inequality with constant $\mu>0$, and let $P_k$ be uniformly bounded.
    Then, for every $\beta\in[0,\beta^\star)$ and every
    $\alpha\in(0,\alpha^\star(\beta))$ (where $\beta^\star$ and $\alpha^\star(\cdot)$ are as in Theorem \ref{thm:PrecHeavyBall}), there exist constants
    $C>0$ and $\rho\in(0,1)$ such that
    \begin{equation}
        f(x_k)-f^\star \leq C\rho^k,
        \qquad \forall k\in\mathbb N.
    \end{equation}
\end{corollary}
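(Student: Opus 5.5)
The plan is to reuse the Lyapunov function behind Theorem~\ref{thm:PrecHeavyBall} and upgrade its per-step decrease to a geometric contraction via the P\L{} inequality. Although the proof of Theorem~\ref{thm:PrecHeavyBall} is not reproduced here, I expect it to rest on an energy of the form
\begin{equation*}
V_k := f(x_k)-f^\star + c\,\|x_k-x_{k-1}\|^2,
\end{equation*}
or equivalently with $\|H_k\|^2$ in place of $\|x_k-x_{k-1}\|^2$, for some $c>0$. For $\beta<\beta^\star$ and $\alpha<\alpha^\star(\beta)$ it should satisfy a descent inequality
\begin{equation*}
V_{k+1}-V_k\le -a\,\|\nabla f(x_k)\|^2 - b\,\|x_k-x_{k-1}\|^2
\end{equation*}
with $a,b>0$ depending only on $\alpha,\beta,L,\Pmin,\Pmax$. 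I will take this inequality as the output of the theorem's proof; the uniform bounds $\Pmin I\preceq P_k\preceq\Pmax I$ are exactly what make $a$ and $b$ independent of $k$. If the theorem's proof instead produced only summability of the gradients, I would redo its computation explicitly: expand $f(x_{k+1})$ with the $L$-smooth upper bound, write $x_{k+1}-x_k=-\alpha P_{k+1}^{-1}H_{k+1}$, and split $H_{k+1}=\beta H_k+(1-\beta)\nabla f(x_k)$. I would then absorb the cross terms with Young's inequality, using $\|P_k^{-1}\|\le 1/\Pmin$ and $\langle \nabla f, P^{-1}\nabla f\rangle\ge \|\nabla f\|^2/\Pmax$.

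Next I would bound $V_k$ from above by the two dissipated quantities. The P\L{} inequality gives $f(x_k)-f^\star\le \mu^{-1}\|\nabla f(x_k)\|^2$. The momentum term is already present in the dissipation as $\|x_k-x_{k-1}\|^2$, or as $\|H_k\|^2$ through the identity $\|x_k-x_{k-1}\|\le (\alpha/\Pmin)\|H_k\|$. Combining the two,
\begin{equation*}
V_k\le \max\{\mu^{-1},\,c\}\bigl(\|\nabla f(x_k)\|^2+\|x_k-x_{k-1}\|^2\bigr)\le \frac{\max\{\mu^{-1},c\}}{\min\{a,b\}}\,(V_k-V_{k+1}).
\end{equation*}
This yields $V_{k+1}\le \rho V_k$ with $\rho=1-\min\{a,b\}/\max\{\mu^{-1},c\}$. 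Here $\rho<1$ because $\min\{a,b\}>0$, and $\rho\ge0$ follows automatically from $V_{k+1}\ge0$; if needed, I would replace $\rho$ by $\max\{\rho,1/2\}$. Iterating gives $f(x_k)-f^\star\le V_k\le \rho^k V_0$, so the claim holds with $C=V_0$, or $C=V_1/\rho$ if the recursion only starts at $k=1$.

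The main obstacle is ensuring the dissipation controls \emph{both} components of $V_k$, i.e.\ that $b>0$. A cruder version of the theorem's argument might only yield $-a\|\nabla f\|^2$ plus a telescoping momentum term, and then the momentum part of $V$ would not contract. If that happens, I would add a small cross term $\gamma\langle \nabla f(x_k), x_k-x_{k-1}\rangle$ to $V$, or reweight $c$, so that a strictly negative multiple of $\|x_k-x_{k-1}\|^2$ remains. Shrinking $\alpha$ within $(0,\alpha^\star(\beta))$ should not be necessary, since the constants from the theorem's Young's-inequality step already have slack whenever $\beta<\beta^\star$ and $\alpha<\alpha^\star(\beta)$ strictly.
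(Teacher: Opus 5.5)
Your proposal is correct and follows essentially the same route as the paper: take the two-term dissipation inequality from the proof of Theorem~\ref{thm:PrecHeavyBall}, use the P\L{} inequality to bound the Lyapunov function by the dissipated quantities, and iterate the resulting contraction $\Phi_{k+1}\le(1-1/\tilde C)\Phi_k$ with $C=\Phi_0$. The one difference is the momentum term: the paper's energy is $\Phi_k=f(x_k)-f^\star+\frac{1}{4L}\|e_k\|^2$ with the lag error $e_k=H_k-\nabla f(x_k)$, so $e_0=0$ and $\Phi_0=f(x_0)-f^\star$, rather than $\|x_k-x_{k-1}\|^2$ or $\|H_k\|^2$; that proof already gives $-c_1\|\nabla f(x_k)\|^2-c_2\|e_k\|^2$ with $c_1,c_2>0$ on exactly the stated $(\alpha,\beta)$ range, so the obstacle you anticipate does not arise.
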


Theorem ~\ref{thm:PrecHeavyBall} and Corollary ~\ref{cor:MuonSoft-LinearConvergence} provide a general convergence framework for preconditioned heavy-ball methods. In particular, they show that classical convergence guarantees hold whenever the preconditioning matrices remain uniformly bounded above and below. 

In the context of Muon, this condition is not immediate. While the regularization in the soft-sign formulation induces a natural lower bound on the preconditioner, a global upper bound does not follow directly from the assumptions on $\loss$. Establishing such a bound therefore becomes the key technical step required to apply the general theory.

We will next show that the Soft Muon iteration \eqref{eq:Muon-Soft} generates bounded trajectories under suitable conditions, which in turn implies uniform boundedness of the associated preconditioner. This allows us to invoke Theorem~\ref{thm:PrecHeavyBall} and conclude convergence of the method.

\subsection{Why soft Muon works}

The convergence results for preconditioned heavy-ball methods clarify why the soft-sign formulation is the right proxy for Muon's Newton-Schulz iteration from an analysis perspective: the regularization parameter $\eps$ provides a uniform lower bound on the induced preconditioner.

The lower bound alone is not sufficient to apply Theorem~\ref{thm:PrecHeavyBall}, since a uniform upper bound on $P_k$ would also require a uniform bound on $\Hk$. Such a bound is not available globally, because $P_k=(\Hk\Hk^\top+\eps^2 I)^{1/2}$ grows with the momentum variable. The main purpose of this subsection is therefore to prove boundedness of the generated trajectory, from which the required upper bound follows.

For that purpose, a key property of $d_{\eps}(\Hk) := (\Hk \Hk^\top + \eps^2\Id)^{-1/2}\Hk$ is that $\|d_{\eps}(H)\|<\sqrt{\dimm}$ for every $H\in\R^{\dimm\times\dimn}$, so

\begin{equation}
  \|\xk - \xm\| = \alpha\|d_{\eps}(\Hk)\| < \sqrt{\dimm}\alpha \label{eq:stepbound}
\end{equation}

holds unconditionally for any $\beta\in[0,1)$. This bound is independent of both $\|\Hk\|$ and $\|\nabla \loss(\xk)\|$, and is the mechanism that prevents arbitrarily large one-step excursions, allowing us to prove that for any initialization $x_0\in\PSp$, the sequence generated by \eqref{eq:Muon-Soft} is bounded, circumventing the upper-bound issue.

\begin{theorem}\label{thm:muon-compact}
  Let $\{\xk,\Hk\}$ be the sequence generated by~\eqref{eq:Muon-Soft} from the
  initial condition $\{x_0, \dLoss[x_0]\}$. Assume $f$ is proper, $L$-smooth,
  has a unique global minimizer $x^\star$, and satisfies a global
  Polyak--\L ojasiewicz inequality with constant $\mu>0$. Then there exists a positive function
  $\alpha^\star:[0,1)\to\R_{>0}$ such that for every
  $\beta\in[0,1)$ and for every $\alpha\in(0,\alpha^\star(\beta))$
  the sequences $\{\xk\}$ and $\{\Hk\}$ are bounded.
\end{theorem}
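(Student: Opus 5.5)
Since $f$ is proper, it suffices to show that $\{\xk\}$ stays in a fixed sublevel set $\{x: f(x)\le c\}$, which is compact. Boundedness of $\{\Hk\}$ then follows from $L$-smoothness: $\|\dLoss[x]\|$ is bounded by some $G$ on that compact set, and the momentum recursion is a convex combination, so $\|\Hk\|\le \max\{\|H_0\|,G\}$ by induction. The plan is therefore an invariance argument for a sublevel set, using the uniform step bound \eqref{eq:stepbound} to control excursions. No Lyapunov decrease is needed; we only need to show that $f$ cannot drift upward indefinitely.

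\textbf{Step 1 (descent far from the minimizer).} Fix $c_0> f(x_0)$ and let $\Kset_0=\{f\le c_0\}$. Let $\Kset_1$ be the closed $R$-neighborhood of $\Kset_0$ for a radius $R$ to be fixed, and let $G=\sup_{\Kset_1}\|\nabla f\|$. By the P\L{}I, on $\{f\ge c_0/2+f^\star/2\}\cap\Kset_1$ the gradient norm is bounded below by some $g_0>0$.

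The key observation concerns a window of $N$ consecutive iterations, with $N$ chosen so that $\beta^N$ is small, say $\beta^N G\le g_0/4$. If the window starts at a point of $\Kset_1$ and $\alpha\sqrt m N$ is much smaller than $g_0/L$, then \eqref{eq:stepbound} shows that every iterate in the window lies within $\alpha\sqrt m N$ of the window's first point. By smoothness, all gradients in the window are then within $g_0/4$ of each other. Consequently, after a burn-in of $N$ steps, $\Hk$ equals the current gradient up to an error of at most $g_0/2$: the error comes from the geometric tail $\beta^N\|H_{k-N}\|$ plus the within-window gradient variation. Then
\[
\inner{\dLoss[\xm]}{d_\eps(\Hk)}>0 \quad\text{with a quantitative margin.}
\]
This margin follows by writing $d_\eps(H)=P^{-1}H$ with $\eps I\preceq P\preceq (\|H\|^2+\eps^2)^{1/2}I$, so that $\inner{H}{P^{-1}H}\ge \|H\|^2/\sqrt{\|H\|^2+\eps^2}$. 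The descent lemma then gives
\[
f(\xk)\le f(\xm)-\alpha\, c_1 + \tfrac{L}{2}\alpha^2 m
\]
for some $c_1>0$ depending on $g_0,\eps,G$. Hence $f$ strictly decreases once $\alpha<\alpha^\star(\beta)$. The dependence on $\beta$ enters through $N(\beta)\sim\log(g_0/G)/\log\beta$, and this is why $\alpha^\star$ must depend on $\beta$.

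\textbf{Step 2 (invariance).} Starting from $H_0=\dLoss[x_0]$, the first $N$ steps move $x$ by at most $\alpha\sqrt m N$, so they raise $f$ by at most $G\alpha\sqrt m N$. Each later maximal excursion into the region where $f$ exceeds the midpoint level can likewise overshoot by at most $G\alpha\sqrt m N$ before the Step 1 descent takes over. Inside that region, $f$ then decreases until the trajectory leaves it. With $\alpha$ small, these overshoots keep $\xk$ in $\{f\le c_0+G\alpha\sqrt m N\}\subset\Kset_1$, provided $R$ is chosen compatibly. This must be checked consistently by an induction on $k$, with $R$ and $G$ fixed before $\alpha$.

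\textbf{Main obstacle.} The hard part is Step 1's lag argument: $\Hk$ is an exponential average of past gradients, and it must align with the current gradient. A subtlety is that when the trajectory re-enters the high region, $\Hk$ may still carry old momentum from the low region. That momentum may point in a bad direction, but its norm is bounded by $G$, so it decays within $N$ steps. During those $N$ steps the iterate moves only $O(\alpha N)$, which the overshoot budget absorbs. Making the constants $R$, $G$, $N$, $\alpha^\star$ close without circularity is where care is needed. One would first fix $c_0$ and $R$, then set $G$, then $g_0$, then $N$, and finally $\alpha$.
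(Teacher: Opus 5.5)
\textbf{Gap in Step~1.} Your architecture matches the paper's: invariance of a compact sublevel set, driven by the uniform step bound \eqref{eq:stepbound}, control of the momentum lag, strict descent where the gradient is bounded below, and a small overshoot estimate elsewhere. However, the descent claim in Step~1 does not follow from what you wrote. The descent lemma needs $\langle \nabla f(x_{k-1}), d_\eps(H_k)\rangle>0$, but you only bound $\langle H,P^{-1}H\rangle$. The cross term $\langle H_k-\nabla f(x_{k-1}),P^{-1}H_k\rangle$ is never controlled. A lag of relative size $1/2$ can make it dominate, because $P^{-1}$ can have condition number up to $\sqrt{G^2+\eps^2}/\eps$.

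Concretely, take $17\times 17$ diagonal matrices $g=\mathrm{diag}(1,-\tfrac18,\dots,-\tfrac18)$ and $H=\mathrm{diag}(1,\tau,\dots,\tau)$ with $\eps\ll\tau\ll 1$. Then $\|H-g\|=\tfrac12+4\tau<\tfrac{\sqrt5}{4}=\tfrac12\|g\|$, yet $\langle g,d_\eps(H)\rangle=(1+\eps^2)^{-1/2}-2\tau(\tau^2+\eps^2)^{-1/2}\approx-1$. So with your tolerance $g_0/2$ the step can be an ascent direction. Separately, your tail estimate drops the term $-\beta^N\nabla f(x_{k-1})$, since $(1-\beta)\sum_{j<N}\beta^j=1-\beta^N$; that part is only cosmetic.

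The repair stays inside your framework, because $N$ and $\alpha$ are chosen last. Split the inner product as the paper does: $\langle g,P^{-1}H\rangle=\langle g,P^{-1}g\rangle+\langle g,P^{-1}(H-g)\rangle\ge \|g\|^2/\sqrt{G^2+\eps^2}-\|g\|\,\|H-g\|/\eps$. A lag below $\eps g_0/(2\sqrt{G^2+\eps^2})$ then gives the margin $c_1=g_0^2/(2\sqrt{G^2+\eps^2})$. Choose $N$, and then $\alpha$, to meet this tolerance instead of $g_0/2$.

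\textbf{Comparison with the paper.} Your window/burn-in device and the \emph{stale momentum on re-entry} obstacle are unnecessary. With $e_k:=H_k-\nabla f(x_k)$ you have $e_0=0$, because $H_0=\nabla f(x_0)$, and $e_{k+1}=\beta e_k+\nabla f(x_k)-\nabla f(x_{k+1})$. So \eqref{eq:stepbound} and global $L$-smoothness give $\|e_k\|\le\sqrt m L\alpha/(1-\beta)$ for every $k$, with no compactness needed. Since $H_{k+1}=\nabla f(x_k)+\beta e_k$, the momentum is always $O(\alpha/(1-\beta))$-close to the current gradient. Descent therefore holds at every step taken from the high region, and only a one-step overshoot from the low region must be absorbed. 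You can then drop the buffer $K_1$ and keep $x_k\in\{f\le c_0\}$ directly.

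Conversely, your level-based split, with $g_0=\sqrt{\mu(c_0-f^\star)/2}$ from the P\L{} inequality, is slightly better than the paper's split into a neighbourhood $D$ of $x^\star$ and $D_C\setminus D$. It gives the gradient lower bound explicitly and does not use uniqueness of the minimizer. The paper needs uniqueness to guarantee $\gamma(C)>0$ on $D_C\setminus D$.
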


Once boundedness of the iterates is established, the preconditioner $\Pk=(\Hk\Hk^\top+\eps^2I)^{1/2}$ is uniformly bounded along any trajectory. Hence the hypothesis of Theorem \ref{thm:PrecHeavyBall} hold, yielding following convergence result.
 
\begin{corollary}\label{cor:muon-convergence}
  Consider the Soft Muon~\eqref{eq:Muon-Soft}, then there exists $\bar\alpha(\beta)>0$ such that for every $\beta\in[0,\beta^\star)$ and
  for every
  $\alpha\in(0,\bar\alpha(\beta))$ the sequences $\{\xk\}$ and $\{\Hk\}$ are
  bounded and $\lim_{k\to\infty}\|\dLoss[\xk]\|=0$, where
  $\beta^\star=1/\sqrt{2}$ is the threshold from Theorem \ref{thm:PrecHeavyBall}.
\end{corollary}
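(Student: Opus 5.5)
The argument composes Theorem~\ref{thm:muon-compact} with Theorem~\ref{thm:PrecHeavyBall}. Corollary~\ref{cor:MuonSoft-LinearConvergence} is not needed here.

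\textbf{Step 1: set up the thresholds.} Fix $\beta\in[0,\beta^\star)$. Theorem~\ref{thm:muon-compact} gives a step-size threshold, call it $\alpha_c(\beta)$. Theorem~\ref{thm:PrecHeavyBall} gives another, call it $\alpha_h(\beta)$. Define
\begin{equation*}
\bar\alpha(\beta):=\min\{\alpha_c(\beta),\alpha_h(\beta)\}>0.
\end{equation*}
For any $\alpha\in(0,\bar\alpha(\beta))$, Theorem~\ref{thm:muon-compact} applies, since $\beta<\beta^\star<1$. So there is $R>0$ with $\|\Hk\|\le R$ for all $k$, and $\{\xk\}$ is bounded as well.

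\textbf{Step 2: bound the preconditioner.} Write Soft Muon~\eqref{eq:Muon-Soft} in the form~\eqref{eq:PrecHeavyBall} with
\begin{equation*}
\Pk:=(\Hk\Hk^\top+\eps^2 I)^{1/2}.
\end{equation*}
This matrix is symmetric positive definite. Its eigenvalues are $\sqrt{\sigma_i^2+\eps^2}$, where the $\sigma_i$ are the singular values of $\Hk$ (padded with zeros up to $m$). Since $\sigma_i\le\|\Hk\|\le R$, we get
\begin{equation*}
\eps I\preceq \Pk\preceq \sqrt{R^2+\eps^2}\,I .
\end{equation*}
Take $\Pmin=\eps$ and $\Pmax=\sqrt{R^2+\eps^2}+1$, so that the strict inequality $\Pmax>\Pmin$ holds.

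\textbf{Step 3: the main obstacle.} The difficulty is logical rather than computational. $\Pmax$ depends on $R$, which depends on the trajectory and hence on $\alpha$. Meanwhile the threshold $\alpha^\star(\beta)$ of Theorem~\ref{thm:PrecHeavyBall} presumably depends on $\Pmin$ and $\Pmax$. So one cannot choose $\alpha$ afterwards without circularity.

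I would resolve this by making $R$ uniform in $\alpha$. I would re-inspect the proof of Theorem~\ref{thm:muon-compact} and aim for a bound on $\{\xk,\Hk\}$ that holds for all $\alpha\le\alpha_c(\beta)$ simultaneously, e.g.\ a sublevel-set bound of a Lyapunov function depending only on $x_0$, $\beta$ and the step bound~\eqref{eq:stepbound}. Given such a uniform $R(\beta)$, I fix $\Pmax(\beta)$ first and then set $\alpha_h(\beta)$ to the threshold of Theorem~\ref{thm:PrecHeavyBall} for these fixed bounds.

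If that uniformity fails, the fallback is different. The proof of Theorem~\ref{thm:PrecHeavyBall} presumably needs only the lower bound $\Pmin$ to make the descent term $\langle\nabla f,\Pk^{-1}\Hk\rangle$ positive. The upper bound then enters only through $\|\Pk^{-1}\|\le 1/\Pmin=1/\eps$ in the $L$-smoothness error terms. In that case $\alpha^\star$ may be taken independent of $\Pmax$, and the circularity disappears.

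\textbf{Step 4: conclude.} With all hypotheses verified, Theorem~\ref{thm:PrecHeavyBall} gives $\lim_{k\to\infty}\|\dLoss[\xk]\|=0$. Boundedness of $\{\xk\}$ and $\{\Hk\}$ was already obtained in Step 1. The value $\beta^\star=1/\sqrt2$ is taken directly from the statement of Theorem~\ref{thm:PrecHeavyBall}.
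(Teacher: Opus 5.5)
This is the paper's argument. The paper also sets $\bar\alpha(\beta)$ to the minimum of the thresholds from Theorems~\ref{thm:PrecHeavyBall} and~\ref{thm:muon-compact}. It uses $\|\Hk\|\le\Gamma$ to place the spectrum of $\Pk=(\Hk\Hk^\top+\eps^2 I)^{1/2}$ in $[\eps,\sqrt{\Gamma^2+\eps^2}]$, and then invokes Theorem~\ref{thm:PrecHeavyBall}.

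The ordering issue you raise in Step 3 is real, and the paper's proof passes over it. The heavy-ball threshold depends on $\Pmax=\sqrt{\Gamma^2+\eps^2}$. But in Theorem~\ref{thm:muon-compact} the threshold is $\sup_C\alpha^\star(\beta,C)$, so the level $C$, and with it $\Gamma=\Gamma(C)$, is chosen only after $\alpha$.

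Your primary fix works and follows directly from the proof of Theorem~\ref{thm:muon-compact}. Fix one level $C>0$ with $C\ge f(x_0)-f^\star$ instead of taking the supremum. The induction there then gives $\xk\in D_C$ and $\|\Hk\|\le\Gamma(C)$ for every $\alpha<\alpha^\star(\beta,C)$, a bound independent of $\alpha$. So take $R=\Gamma(C)$ in Step 2, and let $\alpha_h(\beta)$ be the threshold of Theorem~\ref{thm:PrecHeavyBall} for these now-fixed $\Pmin,\Pmax$. Then set $\bar\alpha(\beta):=\min\{\alpha^\star(\beta,C),\alpha_h(\beta)\}$.

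Your fallback, however, misdescribes Theorem~\ref{thm:PrecHeavyBall}. Its proof lower-bounds the descent term by $\inner{G_k}{\Pk^{-1}G_k}\ge\|G_k\|^2/\Pmax$. So it is the \emph{upper} bound on $\Pk$ that makes descent quantitative, while $\Pmin$ controls the error terms. Accordingly $C_G=\alpha/\Pmax-2L\alpha^2/\Pmin^2$ and $\alpha^\star(\beta)<\alpha_1=\Pmin^2/(2L\Pmax)$, which shrinks as $\Pmax$ grows. The theorem as stated therefore cannot be invoked with a $\Pmax$-free threshold.

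A $\Pmax$-free threshold is attainable, but only through a different proof, not a reading of the existing one. You would re-prove the theorem with $\inner{G_k}{\Pk^{-1}G_k}$ itself as the descent quantity, bounding the error terms via $\Pk^{-1}\preceq\Pmin^{-1}I$. $\Pmax$ would then enter only at the very end, to pass from $\sum_k\inner{G_k}{\Pk^{-1}G_k}<\infty$ to $\|G_k\|\to 0$.
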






Corollary~\ref{cor:MuonSoft-LinearConvergence} establishes linear convergence of Soft Muon under a global P\L{} inequality, provided the learning rate and momentum parameter satisfy the bounds of Theorem~\ref{thm:PrecHeavyBall}. Together with Theorem~\ref{thm:muon-compact}, this shows that the regularization induced by the soft-sign formulation restores classical convergence guarantees despite the normalization-based update structure.

Note that the constraint $\beta<1/\sqrt{2}$ does not reflect the empirical use of Muon, where $\beta$ is typically picked at around $0.95$. The constraint comes from the general result in Theorem \ref{thm:PrecHeavyBall}, where convergence needs to be guaranteed for \emph{any} bounded preconditioner, and does not mean that $\beta>1/\sqrt{2}$ will necessarily break convergence for Muon specifically. We believe that the specific structure of the Newton-Schulz iteration can be leveraged to tighten this constraint, and will investigate this gap in future works.


Having established this connection between normalization-based updates and preconditioned heavy-ball dynamics, we next investigate whether the same perspective can be extended to Nesterov-type acceleration schemes.



\section{A Nesterov-based Muon-type algorithm}
\label{sec:Muesterov}

Nesterov acceleration naturally motivates investigating whether the normalization structure identified for Muon can be extended beyond heavy-ball dynamics,
and whether similar convergence guarantees can be established.
In light of this question, consider the preconditioned Nesterov algorithm below.
\begin{subequations}
    \label{eq:PrecNesterov-def}
    \begin{align}
        \yk &= \xk + \beta (\xk - \xm), \\
        \xp &= \yk - \alpha \Pk^{-1}\dLoss[\yk],
    \end{align}
\end{subequations}
where $\alpha>0$, $\beta\in[0,1)$ are the learning rate and momentum parameter, respectively, $y_k$ is the momentum-extrapolated point at which the gradient is evaluated, and$P_k^{-1}\nabla f(y_k)$ defines the preconditioned descent direction. 

Analogously to the heavy-ball case, we establish convergence of the preconditioned Nesterov method under uniform boundedness of the preconditioner.

\begin{theorem}
    \label{thm:precNest}
    Consider the preconditioned Nesterov defined in
    \eqref{eq:PrecNesterov-def} for solving \eqref{eq:optprob-def}, and assume the preconditioning
    matrix $\Pk$ is uniformly bounded -- \ie~there exist $\Pmax>\Pmin>0$ such that $\Pmin I\preceq \Pk\preceq \Pmax I$ for all $k\in\mathbb{N}$.
    Then there exist a constant $\beta^*\in(0,1)$ and a positive
    function $\alpha^*:[0,\beta^*)\to\re_{>0}$ such that for every
    $\beta\in[0,\beta^*)$ and every $\alpha\in(0,\alpha^*(\beta))$,
    it holds that
    \begin{equation}
        \label{eq:PrecNestConvThm}
        \lim_{k\to\infty}\|\nabla \loss(\yk)\|=0.
    \end{equation}
    That is, the gradient evaluated at the extrapolated points converges to zero.
\end{theorem}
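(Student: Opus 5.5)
We prove Theorem~\ref{thm:precNest} with a Lyapunov argument that mirrors the one behind Theorem~\ref{thm:PrecHeavyBall}. Write $d_k:=\xk-\xm$, so that $\yk=\xk+\beta d_k$ and $d_{k+1}=\beta d_k-\alpha\Pk^{-1}\dLoss[\yk]$. The candidate Lyapunov function is
\begin{equation*}
V_k := \loss(\xk)-\lmin + \frac{c}{\alpha}\|d_k\|^2 ,
\end{equation*}
with a constant $c>0$ fixed later in terms of $\Pmin,\Pmax,L$. It is bounded below by $0$, since $\lmin$ is the minimum of $\loss$. The goal is a one-step decrease of the form
\begin{equation*}
V_{k+1}\le V_k-\gamma\,\alpha\|\dLoss[\yk]\|^2
\end{equation*}
for some $\gamma>0$. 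Telescoping this inequality gives $\sum_k\|\dLoss[\yk]\|^2<\infty$, and hence \eqref{eq:PrecNestConvThm}.

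\emph{Step 1: descent of $\loss$.} Apply $L$-smoothness twice. First, at $\yk$:
\begin{equation*}
\loss(\xp)\le \loss(\yk)-\alpha\ip[\dLoss[\yk],\Pk^{-1}\dLoss[\yk]]+\tfrac{L\alpha^2}{2}\|\Pk^{-1}\dLoss[\yk]\|^2 .
\end{equation*}
Second, compare $\loss(\yk)$ with $\loss(\xk)$:
\begin{equation*}
\loss(\yk)\le \loss(\xk)+\beta\ip[\dLoss[\yk],d_k]+\tfrac{L\beta^2}{2}\|d_k\|^2 .
\end{equation*}
The bounds $\Pmin I\preceq\Pk\preceq\Pmax I$ give $\ip[g,\Pk^{-1}g]\ge \Pmax^{-1}\|g\|^2$ and $\|\Pk^{-1}g\|\le \Pmin^{-1}\|g\|$. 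The preconditioned descent term is therefore at least $\alpha(\Pmax^{-1}-L\alpha/(2\Pmin^2))\|\dLoss[\yk]\|^2$.

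\emph{Step 2: handle the momentum terms.} We need to absorb the cross term $\beta\ip[\dLoss[\yk],d_k]$ and the term $\tfrac{L\beta^2}{2}\|d_k\|^2$. For the cross term, use Young's inequality in the form
\begin{equation*}
\beta|\ip[g,d_k]|\le \tfrac{\alpha}{4\Pmax}\|g\|^2+\tfrac{\beta^2\Pmax}{\alpha}\|d_k\|^2 .
\end{equation*}
Next expand the change in the kinetic term, $\frac{c}{\alpha}(\|d_{k+1}\|^2-\|d_k\|^2)$, using
\begin{equation*}
\|d_{k+1}\|^2\le (1+\eta)\beta^2\|d_k\|^2+(1+\eta^{-1})\tfrac{\alpha^2}{\Pmin^2}\|\dLoss[\yk]\|^2 .
\end{equation*}
Collecting terms, the coefficient of $\|d_k\|^2/\alpha$ is
\begin{equation*}
\beta^2\Pmax+\tfrac{L\alpha\beta^2}{2}-c\bigl(1-(1+\eta)\beta^2\bigr).
\end{equation*}
The coefficient of $\alpha\|\dLoss[\yk]\|^2$ is
\begin{equation*}
-\tfrac{3}{4\Pmax}+\tfrac{L\alpha}{2\Pmin^2}+c(1+\eta^{-1})\tfrac{1}{\Pmin^2}.
\end{equation*}

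\emph{Step 3: choose the constants.} This is the main obstacle: $c$ must be large enough to make the first coefficient nonpositive and small enough to keep the second one negative. The first condition requires $c\ge \beta^2\Pmax/(1-(1+\eta)\beta^2)$ (ignoring the $O(\alpha)$ term). Substituting into the second gives a requirement of the form
\begin{equation*}
\frac{\beta^2\Pmax(1+\eta^{-1})}{\Pmin^2\bigl(1-(1+\eta)\beta^2\bigr)}<\frac{3}{4\Pmax},
\end{equation*}
which holds for all sufficiently small $\beta$. This defines $\beta^*\in(0,1)$, depending on $\Pmin/\Pmax$ and the choice of $\eta$, say $\eta=1$. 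For each $\beta<\beta^*$ the inequality is strict, so there is slack. We then choose $\alpha^*(\beta)>0$ small enough that the $O(\alpha)$ terms, $\tfrac{L\alpha}{2\Pmin^2}$ and $\tfrac{L\alpha\beta^2}{2}$, do not consume the slack. This yields the decrease with some $\gamma>0$.

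If the Young-splitting route gives too restrictive a $\beta^*$ or fails to close, there is a fallback. Replace $\frac{c}{\alpha}\|d_k\|^2$ by a weighted quadratic $\frac{1}{\alpha}\|d_k\|^2_{Q}$ and tune the weight as in the heavy-ball proof. Only existence of $\beta^*$ is claimed, so the crude choice should suffice.

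Finally, summing the decrease over $k$ gives $\gamma\alpha\sum_k\|\dLoss[\yk]\|^2\le V_0<\infty$. The summands must therefore tend to zero, so $\|\dLoss[\yk]\|\to0$, which is \eqref{eq:PrecNestConvThm}.
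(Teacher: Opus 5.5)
Your proposal is correct and is essentially the paper's argument: the same Lyapunov function $f(x_k)-f^\star+\frac{c}{\alpha}\|x_k-x_{k-1}\|^2$ (the paper fixes $c=p_{\min}/4$), the same two applications of $L$-smoothness routed through $y_k$, and the same telescoping conclusion. The differences are only bookkeeping: you absorb the cross terms with Young's inequality and pick $c$ from a nonempty interval rather than checking a $2\times 2$ quadratic form via Sylvester's criterion, you track $\|\nabla f(y_k)\|$ instead of $\|P_k^{-1}\nabla f(y_k)\|$, and you bound $f(y_k)$ using the gradient at $y_k$ (valid by the two-sided quadratic bound from $L$-smoothness), which spares you the paper's extra $L\beta^2\|x_k-x_{k-1}\|^2$ correction term.
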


Under a global Polyak–\L{}ojasiewicz condition, this again implies linear convergence of the function values.


\begin{corollary}\label{cor:precNest-PL}
    Let $\loss$ satisfy a global Polyak--\L{}ojasiewicz inequality with constant $\mu>0$.
    Then, for every $\beta\in[0,\beta^\star)$ and every
    $\alpha\in(0,\alpha^\star(\beta))$ (where $\beta^*$ and
    $\alpha^*(\cdot)$ are as in Theorem~\ref{thm:precNest}), there exist constants
    $C>0$ and $\rho\in(0,1)$ such that
    \begin{equation}
        f(x_k)-f^\star \leq C\rho^k,
        \qquad \forall k\in\mathbb N.
    \end{equation}
\end{corollary}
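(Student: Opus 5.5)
The plan is to reuse the Lyapunov function constructed in the proof of Theorem~\ref{thm:precNest} and upgrade its decrease inequality to a contraction using the P\L{} inequality. In the proof of Theorem~\ref{thm:precNest}, for $\beta<\beta^*$ and $\alpha<\alpha^*(\beta)$, I expect a function of the form
\begin{equation*}
V_k = \loss(\xk)-\lmin + c\,\|\xk-\xm\|^2,\qquad c=c(\alpha,\beta,\Pmin,\Pmax,L)>0,
\end{equation*}
together with a decrease estimate
\begin{equation*}
V_{k+1}\le V_k - a\|\dLoss[\yk]\|^2 - b\|\xk-\xm\|^2,\qquad a,b>0.
\end{equation*}
The constants $a$ and $b$ depend only on $\alpha,\beta,L,\Pmin,\Pmax$. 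Such an estimate is what gives $\sum_k\|\dLoss[\yk]\|^2<\infty$, and hence \eqref{eq:PrecNestConvThm}. I will take this inequality as the starting point. If the theorem's proof only yields $b=0$, I will slightly shrink $\alpha^*$ or perturb $c$ so that the strict $b>0$ term appears.

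The key step is to bound $V_k$ from above by the dissipated quantity. The P\L{} inequality must be applied at $\xk$, but the gradient in the decrease estimate is evaluated at $\yk$. First, $\yk-\xk=\beta(\xk-\xm)$, so $L$-smoothness gives
\begin{equation*}
\|\dLoss[\xk]\|\le\|\dLoss[\yk]\|+L\beta\|\xk-\xm\|.
\end{equation*}
Combining this with the P\L{} inequality,
\begin{equation*}
\loss(\xk)-\lmin\le \tfrac{1}{\mu}\|\dLoss[\xk]\|^2\le \tfrac{2}{\mu}\|\dLoss[\yk]\|^2+\tfrac{2L^2\beta^2}{\mu}\|\xk-\xm\|^2 .
\end{equation*}
It follows that
\begin{equation*}
V_k\le \max\!\left(\tfrac{2}{\mu a},\ \tfrac{2L^2\beta^2/\mu + c}{b}\right)\left(a\|\dLoss[\yk]\|^2+b\|\xk-\xm\|^2\right)=:K\,(V_k-V_{k+1}).
\end{equation*}
Rearranging gives $V_{k+1}\le\rho V_k$ with $\rho=1-1/K\in(0,1)$; note that $K\ge1$ automatically, since $V_{k+1}\ge0$.

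To conclude, note that $\loss(\xk)-\lmin\le V_k\le \rho^k V_0$, so $C=V_0$ works. Here $V_0$ is finite and depends on the initialization, through $x_{-1}$ or the convention $x_{-1}=x_0$. For $k\in\mathbb{N}$ the claim follows directly.

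The main obstacle is making sure the decrease inequality from Theorem~\ref{thm:precNest} actually dissipates both $\|\dLoss[\yk]\|^2$ and $\|\xk-\xm\|^2$ with strictly positive weights, rather than, say, $\|\xp-\xk\|^2$. If it instead dissipates $\|\xp-\xk\|^2$, I would rewrite that term using
\begin{equation*}
\xp-\xk=\beta(\xk-\xm)-\alpha\Pk^{-1}\dLoss[\yk]
\end{equation*}
and the bounds $\Pmin I\preceq\Pk\preceq\Pmax I$. This controls the needed terms up to constants, possibly at the cost of shifting the index in $V$ by one step, which only changes $C$.
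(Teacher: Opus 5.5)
Your proposal is correct and follows the paper's proof. Theorem~\ref{thm:precNest} supplies $\Phi_k=f(x_k)-f^\star+A\|x_k-x_{k-1}\|^2$ with $A=P_{\min}/(4\alpha)$, whose decrease dissipates $\|x_k-x_{k-1}\|^2$ and $\|d_k\|^2=\|P_k^{-1}\nabla f(y_k)\|^2$ with strictly positive weights $c_1,c_2$ on the whole range $\alpha<\alpha^*(\beta)$; no shrinking is needed, and $\|\nabla f(y_k)\|\le P_{\max}\|d_k\|$ gives your $a=c_2/P_{\max}^2$, after which $\Phi_k$ is bounded by the dissipated terms and contracted exactly as you do. The only difference is minor: the paper applies P\L{} at $y_k$ and reaches $f(x_k)$ through the quadratic upper bound and Young's inequality, while you apply P\L{} at $x_k$ and use $\|\nabla f(x_k)\|\le\|\nabla f(y_k)\|+L\beta\|x_k-x_{k-1}\|$, which is slightly more direct.
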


Theorem \ref{thm:precNest} and Corollary \ref{cor:precNest-PL} extend the convergence guarantees of Section \ref{sec:Muon} to the Nesterov setting. In particular, they show that preconditioned Nesterov methods exhibit the same qualitative behavior as their heavy-ball counterparts, provided the preconditioner remains uniformly bounded.

We now specialize this framework to the preconditioning structure induced by Muon, leading to a Nesterov-based variant that we refer to as Muesterov.

\subsection{The Muesterov algorithm}

Replacing the heavy-ball momentum in Soft Muon with a Nesterov extrapolation step leads to the following iteration, which we refer to as \emph{Muesterov}.
\begin{subequations}
    \label{eq:Muesterov-def}
    \begin{align}
        \yk &= \xk + \beta (\xk - \xm), \\
        \xp &= \yk - \alpha (\Gk \Gk^\top + \eps^2 I)^{-1/2} \Gk,
    \end{align}
\end{subequations}
where $G_k := \nabla \loss(y_k)$. Note that this preconditioned Nesterov formulation uses an analogous preconditioner to Muon, since $(\Gk \Gk^\top + \eps^2 I)^{-1/2} \Gk\to\sign(\Gk)$ as $\eps\to 0$. Moreover, the update can be implemented using the same Newton-Schulz iteration as in Muon, allowing the method to inherit the same computational advantages and parallelization structure.


As in the heavy-ball case, the key step is to establish boundedness of the generated trajectory, which ensures uniform boundedness of the induced preconditioner. We provide this result in the following theorem.

\begin{theorem}
    \label{thm:Muesterov-Precompact}
    Let $\{\xk,\yk\}$ be the sequence generated by
    \eqref{eq:Muesterov-def} from the initial condition $\{x_0,x_0\}$.
    Assume $f$ is proper, $L$-smooth, has a unique global minimizer
    $x^*$, and satisfies a global P\L{} inequality with constant
    $\mu>0$.
    Then there exist $\beta^*\in(0,1]$ and a positive function
    $\alpha^*:[0,\beta^*)\to\re_{>0}$ such that for every
    $\beta\in[0,\beta^*)$ and every $\alpha\in(0,\alpha^*(\beta))$
    the sequences $\{\xk\}$ and $\{\yk\}$ remain bounded.
\end{theorem}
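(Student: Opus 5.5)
The plan is to mirror the argument behind Theorem~\ref{thm:muon-compact}: use the uniform step bound to control one-step excursions, and use a Lyapunov-type function built from $\loss$ to show that far from $x^*$ the iterates cannot escape. First, since $\|(\Gk\Gk^\top+\eps^2 I)^{-1/2}\Gk\|<\sqrt{\dimm}$ for every $\Gk$, we get $\|\xp-\yk\|<\sqrt{\dimm}\,\alpha$. Combined with $\yk-\xk=\beta(\xk-\xm)$, this gives $\|\xp-\xk\|\le\beta\|\xk-\xm\|+\sqrt{\dimm}\,\alpha$. Inducting from $x_{-1}=x_0$ (the initialization $\{x_0,x_0\}$ gives $y_0=x_0$), we obtain $\|\xk-\xm\|\le \sqrt{\dimm}\,\alpha/(1-\beta)=:D$ and $\|\yk-\xk\|\le\beta D$, both uniformly in $k$ and for every $\beta\in[0,1)$. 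Consequently it suffices to bound $\{\xk\}$; boundedness of $\{\yk\}$ then follows.

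Second, I will work with the Lyapunov candidate $V_k=\loss(\xk)-\lmin+c\|\xk-\xm\|^2$, or more simply $\loss$ itself, and study it on the sublevel sets, which are compact because $\loss$ is proper. Fix a level $R$ with $x_0$ and its $D$-neighborhood inside $\Omega_R=\{\loss\le \lmin+R\}$. Suppose $\yk$ lies in the annulus where $\|\nabla\loss(\yk)\|\ge g_0>0$; P\L{} ensures this outside any small sublevel set, since $\|\nabla\loss\|^2\ge\mu(\loss-\lmin)$. On a compact region $\|\Gk\|\le G_{\max}$, so the effective preconditioner satisfies $\Pk=(\Gk\Gk^\top+\eps^2I)^{1/2}\preceq\sqrt{G_{\max}^2+\eps^2}\,I$ and $\Pk\succeq\eps I$. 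Then $L$-smoothness gives
\[
\loss(\xp)\le\loss(\yk)-\alpha\ip[\Gk,\Pk^{-1}\Gk]+\tfrac{L\alpha^2\dimm}{2},
\]
while $\loss(\yk)\le\loss(\xk)+\beta\ip[\nabla\loss(\xk),\xk-\xm]+\tfrac{L\beta^2D^2}{2}$. The descent term is at least $\alpha g_0^2/\sqrt{G_{\max}^2+\eps^2}$, which is linear in $\alpha$. The other error terms are $O(\alpha^2)$ except the cross term $\beta\ip[\nabla\loss(\xk),\xk-\xm]$, which is $O(\alpha)$ and must be absorbed. I plan to absorb it using the momentum-aware function $V_k$ with a suitable $c$. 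Alternatively, I can write $\xk-\xm=\beta(\xm-x_{k-2})-\alpha P_{k-1}^{-1}G_{k-1}$ and use continuity of the gradient over steps of size $O(\alpha)$ to show that $G_{k-1}$ is nearly aligned with $\nabla\loss(\xk)$. This makes the cross term nonpositive up to $O(\alpha^2)$ when $\beta$ is below some threshold $\beta^*$.

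Third, the conclusion is a trapping argument. Outside a small sublevel set, $V$ strictly decreases. Inside it, a single step can raise $\loss$ by at most $O(\alpha)$, using the bounds on step length and the gradient on compacts. Hence for $\alpha<\alpha^*(\beta)$ the trajectory never leaves $\Omega_R$, and properness makes it bounded. The main obstacle is the $O(\alpha)$ cross term coming from the extrapolation. Unlike the heavy-ball case, the normalized step does not scale with the gradient, so the extrapolated displacement $\beta(\xk-\xm)$ can point uphill by an amount comparable to the descent term. Controlling it is what forces the restriction $\beta<\beta^*$. I will first try the quadratic-penalty Lyapunov function with $c$ chosen of order $\beta/\alpha$, adjusting $c$ if the resulting inequalities fail to close.
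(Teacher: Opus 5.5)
There is a gap, though a small one. Your architecture is the paper's: the uniform bound $\|x_k-x_{k-1}\|\le D=\alpha\sqrt m/(1-\beta)$, compact sublevel sets, a gradient floor on an annulus, one-step descent there, and a trapping induction. But the proposal stops at the one estimate that carries content, namely showing the extrapolation does not undo the descent. In its place you offer hedged alternatives rather than an inequality, and neither alternative does what you want from it.

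Take the penalty first. The normalized step does not shrink, so a penalty $c\|s_k\|^2$ with $c\sim\beta/\alpha$ creates its own increment $c(\alpha^2\|d_k\|^2-2\alpha\beta\langle s_k,d_k\rangle)$ of order $\alpha\beta$. Meanwhile $-c(1-\beta^2)\|s_k\|^2$ can only reduce the linear term $\beta\Gamma\|s_k\|$ (with $\Gamma$ a gradient bound on the compact region) to $\beta^2\Gamma^2/(4c(1-\beta^2))$, again of order $\alpha\beta$. So the penalty just reshuffles $O(\alpha\beta)$ terms, still needs $\beta$ small, and gains nothing over a direct bound. The one-step alignment has a similar problem: it leaves $\beta^2\langle\nabla f(x_k),x_{k-1}-x_{k-2}\rangle$, which is $O(\alpha\beta^2)$, not $O(\alpha^2)$.

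The missing observation is that nothing needs absorbing. The cross term carries a factor $\beta$, and the theorem lets $\beta^*$ depend on the problem constants. The paper tracks $f(y_k)$ rather than $f(x_k)$. For $y_k\in\mathcal K$, the step $x_{k+1}=y_k-\alpha d_\varepsilon(y_k)$ uses the gradient at $y_k$ itself, so $f(x_{k+1})\le f(y_k)-c_\alpha$ with $c_\alpha=\alpha\gamma^2/\sqrt{\gamma^2+\varepsilon^2}-\frac{Lm}{2}\alpha^2$, and there is no cross term. The extrapolation then costs at most $f(y_{k+1})-f(x_{k+1})\le\Gamma\rho+\frac{L}{2}\rho^2$ with $\rho=\alpha\beta\sqrt m/(1-\beta)$. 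The $O(\alpha\beta)$ piece $\Gamma\rho$ is dominated by the linear part of $c_\alpha$ exactly when $\beta\Gamma\sqrt m/(1-\beta)<\gamma^2/\sqrt{\gamma^2+\varepsilon^2}$, which is the paper's $\beta^*(C)$; the remainder is $O(\alpha^2)$. The same crude bound $\beta|\langle\nabla f(x_k),x_k-x_{k-1}\rangle|\le\beta\Gamma D$ closes your $f(x_k)$ version verbatim.

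If you want more than the paper gives, push your alignment idea through the whole momentum sum $x_k-x_{k-1}=-\alpha\sum_{j<k}\beta^{k-1-j}d_j$. Using $\langle\nabla f(y_j),d_j\rangle\ge 0$ and $\|\nabla f(x_k)-\nabla f(y_j)\|\le L(k-j+\beta)D$, you get $\beta\langle\nabla f(x_k),x_k-x_{k-1}\rangle\le \alpha^2Lm\beta(1+\beta-\beta^2)/(1-\beta)^3$, which is genuinely $O(\alpha^2)$ for every $\beta<1$. Combine this with P\L{} at $y_k$ when $\|\nabla f(y_k)\|$ is small, which gives $f(x_{k+1})\le f^\star+g^2/\mu+\frac{Lm}{2}\alpha^2$ in that case. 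The result is boundedness with $\beta^*=1$, which the paper's argument does not give, since its $\beta^*(C)$ is always below $1/(1+\sqrt m)$.
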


Once boundedness is established, the preconditioner remains uniformly bounded along the trajectory, allowing us to invoke Theorem~\ref{thm:precNest} to obtain convergence.

\begin{corollary}
    \label{cor:Muesterov-Convergence}
    For the algorithm presented in \eqref{eq:Muesterov-def}, there exists $\bar\beta\in(0,1)$ and $\bar\alpha:[0,\bar\beta)\to\re_+$, such that every $\beta\in[0,\bar\beta)$ and every
    $\alpha\in(0,\bar\alpha(\beta))$ the sequences $\{\xk\}$ and
    $\{\yk\}$ are bounded and $\lim_{k\to\infty}\|\dLoss[\yk]\|=0$, i.e. the gradient evaluated at the extrapolated points converges to zero.
\end{corollary}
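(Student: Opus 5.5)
The plan is to combine Theorem~\ref{thm:Muesterov-Precompact} with Theorem~\ref{thm:precNest}, exactly as Corollary~\ref{cor:muon-convergence} combines Theorem~\ref{thm:muon-compact} with Theorem~\ref{thm:PrecHeavyBall}. The one subtlety is that the bounds $\Pmin,\Pmax$ used in Theorem~\ref{thm:precNest} come from the trajectory. So I must check that the thresholds $\beta^*$ and $\alpha^*(\cdot)$ in that theorem can be chosen without depending on $\Pmax$ in a circular way, or else handle the circularity explicitly.

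\textbf{Step 1.} Write Muesterov \eqref{eq:Muesterov-def} as an instance of \eqref{eq:PrecNesterov-def} with $\Pk:=(\Gk\Gk^\top+\eps^2 I)^{1/2}$ and $\Gk=\dLoss[\yk]$. This matrix is symmetric positive definite and satisfies $\Pk\succeq \eps I$, so $\Pmin=\eps$ works unconditionally, independent of the trajectory.

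\textbf{Step 2.} Take $\beta^*_{1}$ and $\alpha^*_1(\cdot)$ from Theorem~\ref{thm:Muesterov-Precompact}. For $\beta<\beta^*_1$ and $\alpha<\alpha_1^*(\beta)$, the sequence $\{\yk\}$ stays in a bounded set $K$. Since $f$ is $L$-smooth, $\|\Gk\|\le \|\dLoss[\xs]\|+L\sup_{y\in K}\|y-\xs\|=:M$, and hence $\Pk\preceq \sqrt{M^2+\eps^2}\,I$.

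\textbf{Step 3.} This step is the main obstacle, because the radius of $K$, and hence $M$, may depend on $\alpha$, $\beta$ and $x_0$. I would handle it by examining the proof of Theorem~\ref{thm:Muesterov-Precompact}, which I expect to rest on the unconditional step bound $\|\xp-\yk\|<\sqrt{\dimm}\alpha$ together with a sublevel-set or Lyapunov argument. Such an argument should give a bound on $K$ that is uniform over all $\alpha$ below a fixed cap and all $\beta$ below a fixed cap, for example a sublevel set of $f$ at level $f(x_0)+c$ enlarged by a fixed margin. Properness of $f$ makes that set compact.

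From that uniform bound I obtain a $\Pmax$ independent of $\alpha$ in the relevant range. I then apply Theorem~\ref{thm:precNest} with the pair $(\Pmin,\Pmax)=(\eps,\sqrt{M^2+\eps^2})$ to get $\beta^*_2$ and $\alpha^*_2(\cdot)$. Finally I set
\[
\bar\beta:=\min\{\beta_1^*,\beta_2^*\},\qquad \bar\alpha(\beta):=\min\{\alpha_1^*(\beta),\alpha_2^*(\beta),\bar\alpha_0\},
\]
where $\bar\alpha_0$ is the cap used in the uniformity argument. Since $\beta_2^*<1$, this gives $\bar\beta\in(0,1)$.

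\textbf{Step 4.} For $\beta<\bar\beta$ and $\alpha<\bar\alpha(\beta)$, both theorems apply. Boundedness of $\{\xk\}$ and $\{\yk\}$ comes from Theorem~\ref{thm:Muesterov-Precompact}, and $\lim_k\|\dLoss[\yk]\|=0$ from \eqref{eq:PrecNestConvThm}.

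If the uniformity in Step~3 fails, I would use a fallback. Theorem~\ref{thm:precNest} only needs the bounds along the specific trajectory, and its Lyapunov decrease argument uses $\Pmax$ only through the step-size threshold. In that case I would state $\bar\alpha$ as depending also on the initial condition $x_0$, which Theorem~\ref{thm:Muesterov-Precompact} already fixes.
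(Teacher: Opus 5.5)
Your proposal is correct and uses the same decomposition as the paper. Boundedness comes from Theorem~\ref{thm:Muesterov-Precompact}. Theorem~\ref{thm:precNest} is then applied with $\Pmin=\eps$ and $\Pmax=\sqrt{\Gamma^2+\eps^2}$, and $\bar\beta,\bar\alpha$ are minima of the two theorems' thresholds.

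The real difference is your Step~3, and there you are more careful than the paper. The paper simply sets $\bar\beta:=\min\{\beta^*_3,\beta^*_4\}$ and $\bar\alpha(\beta):=\min\{\alpha^*_3(\beta),\alpha^*_4(\beta)\}$. Yet the thresholds of Theorem~\ref{thm:precNest} depend on $\Pmax=\sqrt{\Gamma(C)^2+\eps^2}$, and the level $C$ is only produced after $\alpha$ has been fixed, through the supremum over $C$ in Theorem~\ref{thm:Muesterov-Precompact}. Your uniformity step is exactly what removes this circularity.

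Your guess about the proof of Theorem~\ref{thm:Muesterov-Precompact} is right, and the situation is simpler than you expect. That argument runs at a fixed level $C$, and its induction keeps $\yk$ itself in $\mathcal{D}_C$, so no enlargement is needed. Fix one admissible level, e.g.\ $C_0:=\loss(x_0)-f^*+1$. Then $\xk,\yk\in\mathcal{D}_{C_0}$ for every $\beta<\beta^*(C_0)$ and every $\alpha<\alpha^*(\beta,C_0)$, as in \eqref{eq:beta-star-C} and \eqref{eq:alphastar-C}. So $\Pmax:=\sqrt{\Gamma(C_0)^2+\eps^2}$ is fixed before Theorem~\ref{thm:precNest} is applied. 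You may then take $\bar\beta:=\min\{\beta^*(C_0),\beta^*_{\mathrm{N}}\}$ and $\bar\alpha(\beta):=\min\{\alpha^*(\beta,C_0),\alpha^*_{\mathrm{N}}(\beta)\}$, where $\beta^*_{\mathrm{N}},\alpha^*_{\mathrm{N}}(\cdot)$ are the thresholds of Theorem~\ref{thm:precNest} for the pair $(\eps,\sqrt{\Gamma(C_0)^2+\eps^2})$.

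Two bookkeeping points in your version need fixing.
\begin{itemize}
\item Your $\bar\beta:=\min\{\beta_1^*,\beta_2^*\}$ must also be capped by $\beta^*(C_0)$, or by whatever $\beta$-range your uniformity argument uses. Capping only by the supremum threshold in the statement of Theorem~\ref{thm:Muesterov-Precompact} is not enough: for $\beta\geq\beta^*(C_0)$ the level $C_0$ is no longer admissible, so the trajectory bound, and hence $\Pmax$, changes.
\item The natural $\alpha$-cap depends on $\beta$, since $\alpha^*(\beta,C_0)\to 0$ as $\beta\to\beta^*(C_0)$. A constant $\bar\alpha_0$ therefore exists only after restricting $\beta$ strictly below $\beta^*(C_0)$.
\end{itemize}

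Finally, drop the fallback. $\Pmax$ enters Theorem~\ref{thm:precNest} not only through the step-size threshold but also through its $\beta^*$, via the cross coefficient $C_c=\beta(\Pmax+\Pmin/2)$. Letting $\bar\alpha$ depend on $x_0$ also does nothing to resolve the mutual dependence of $\alpha$ and $\Pmax$; the thresholds already depend on $x_0$ through $C$.
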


We have thus extended the convergence framework developed for Muon to a Nesterov-based setting, showing that normalization-based preconditioning is compatible with accelerated momentum schemes. Crucially, Muesterov preserves the normalization-based structure of Muon, differing only in the point at which the gradient is evaluated. This provides both a theoretical justification for the proposed algorithm and a principled foundation for comparing Muon and Muesterov.

\section{Simulations}

\subsection{Building intuition through a synthetic example}

\begin{figure}
    \centering
    \includegraphics[width=0.2\linewidth]{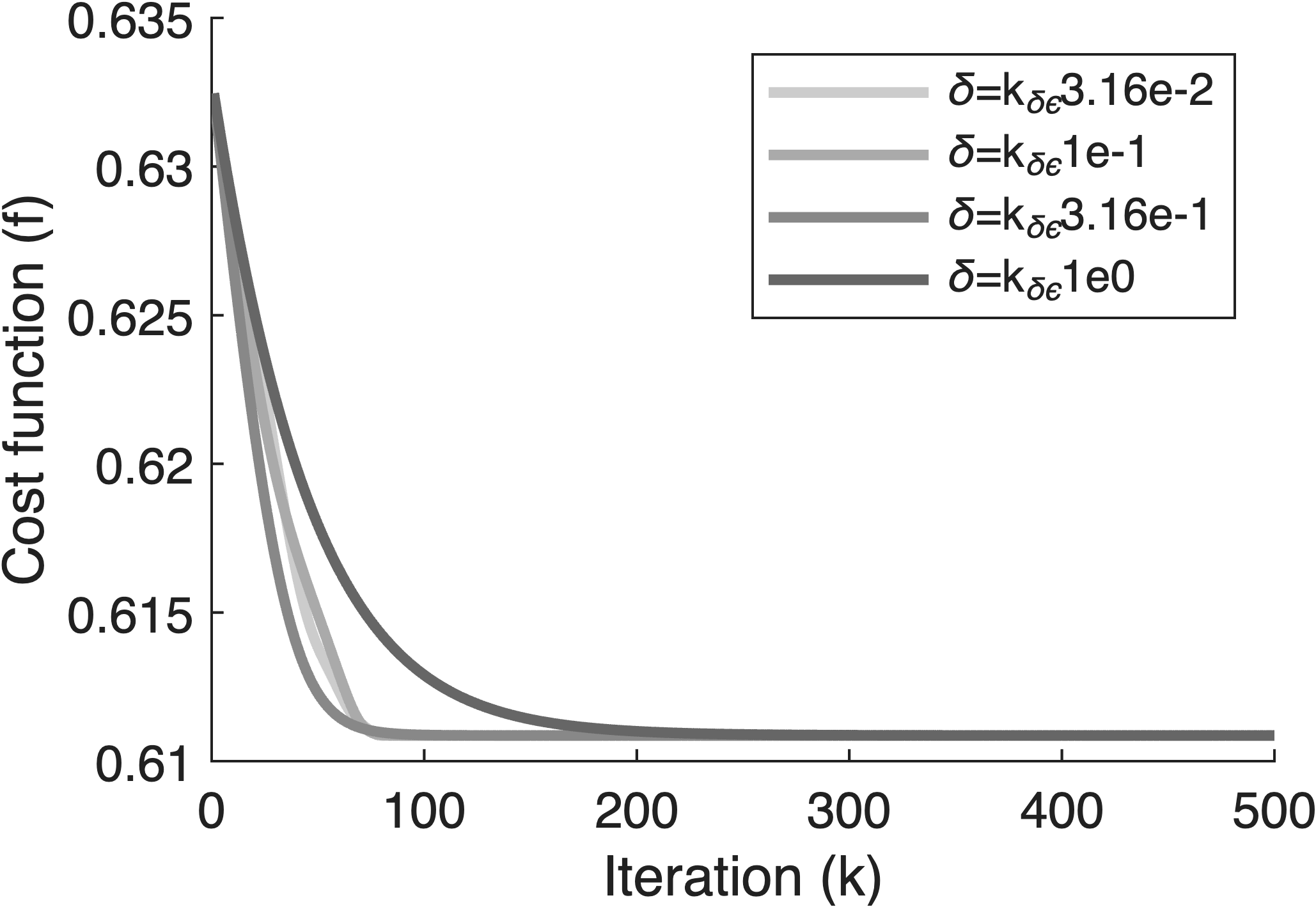}
    \includegraphics[width=0.2\linewidth]{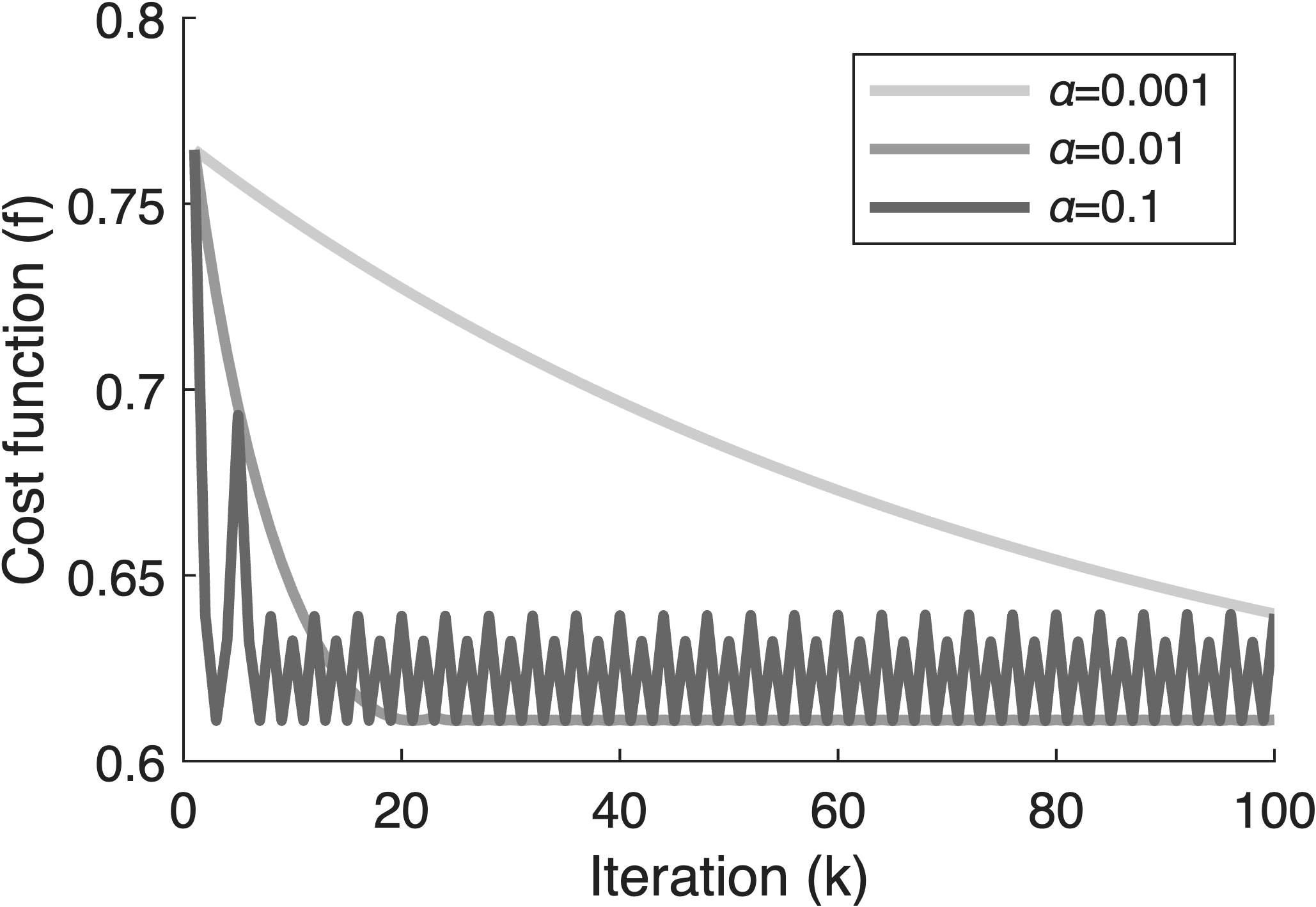}
    \includegraphics[width=0.2\linewidth]{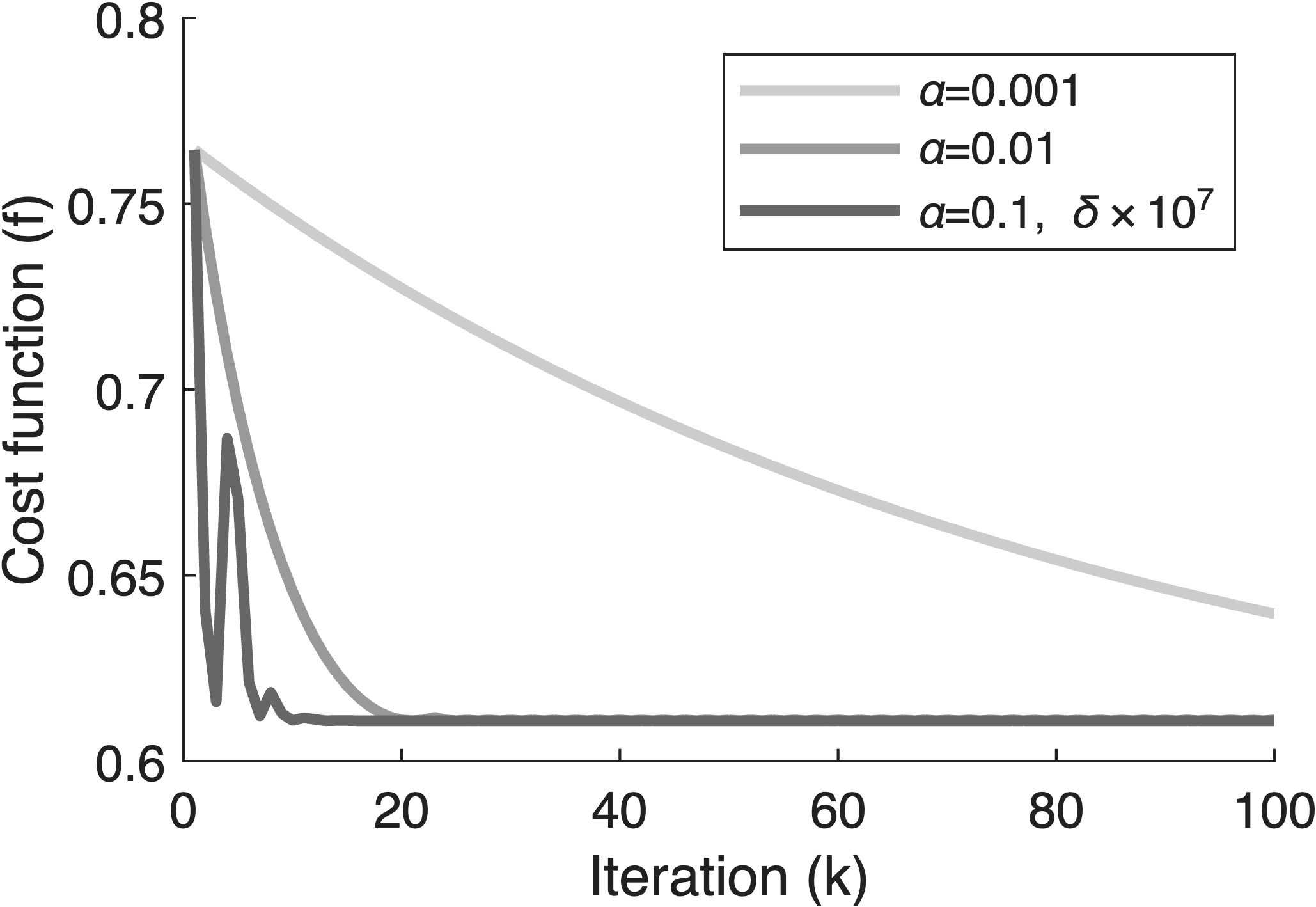}
    \includegraphics[width=0.2\linewidth]{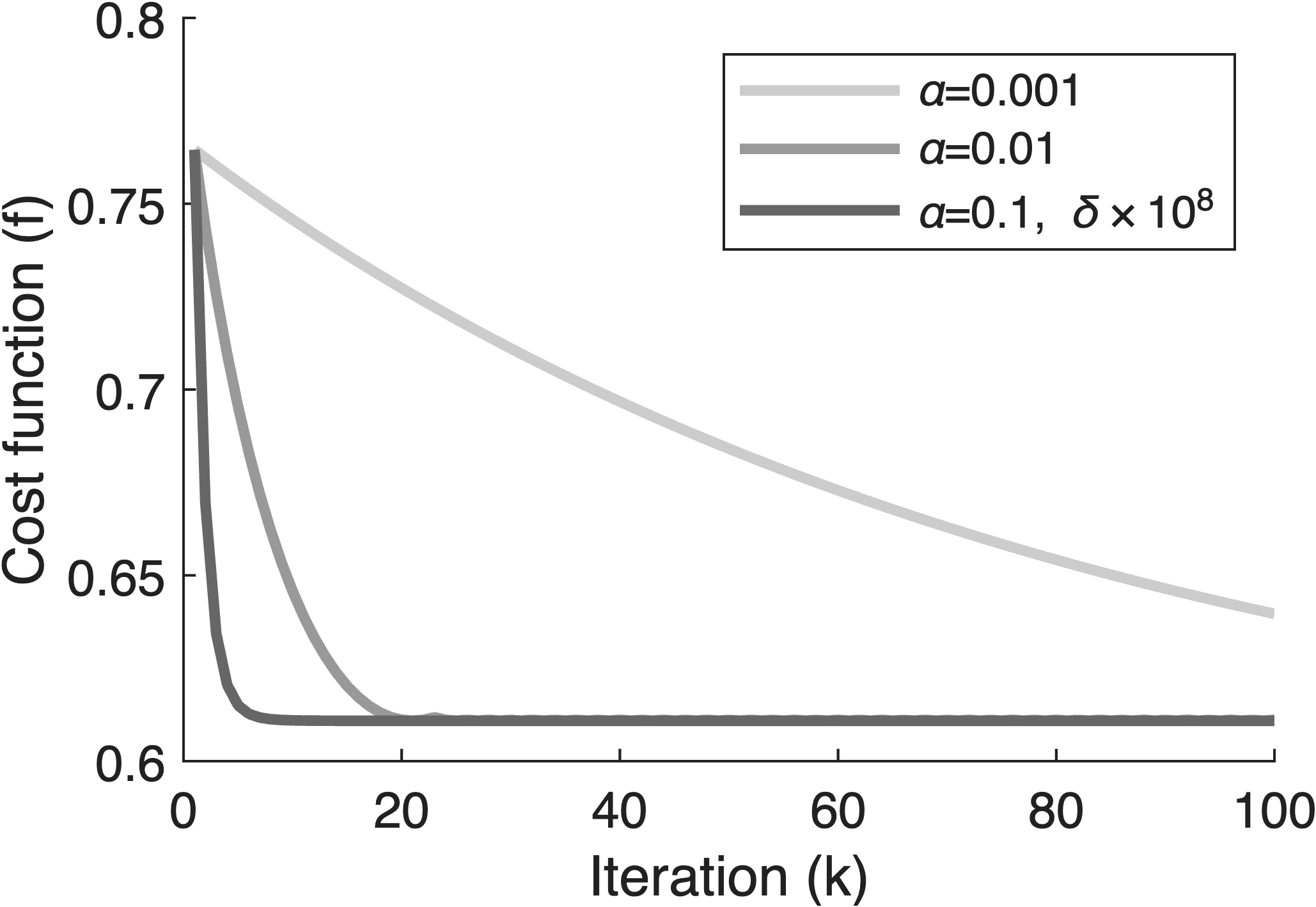}\\
    \includegraphics[width=0.2\linewidth]{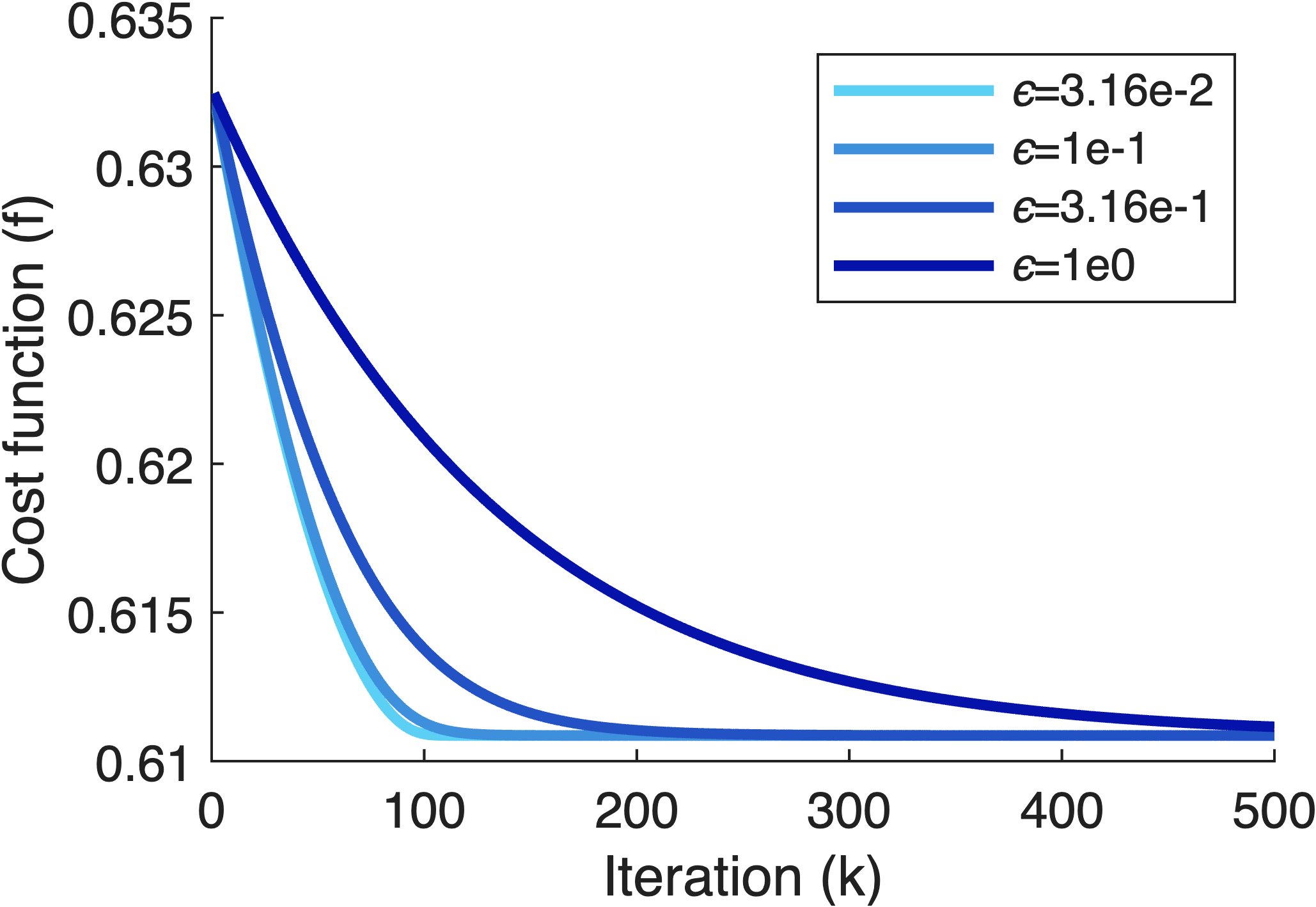}
    \includegraphics[width=0.2\linewidth]{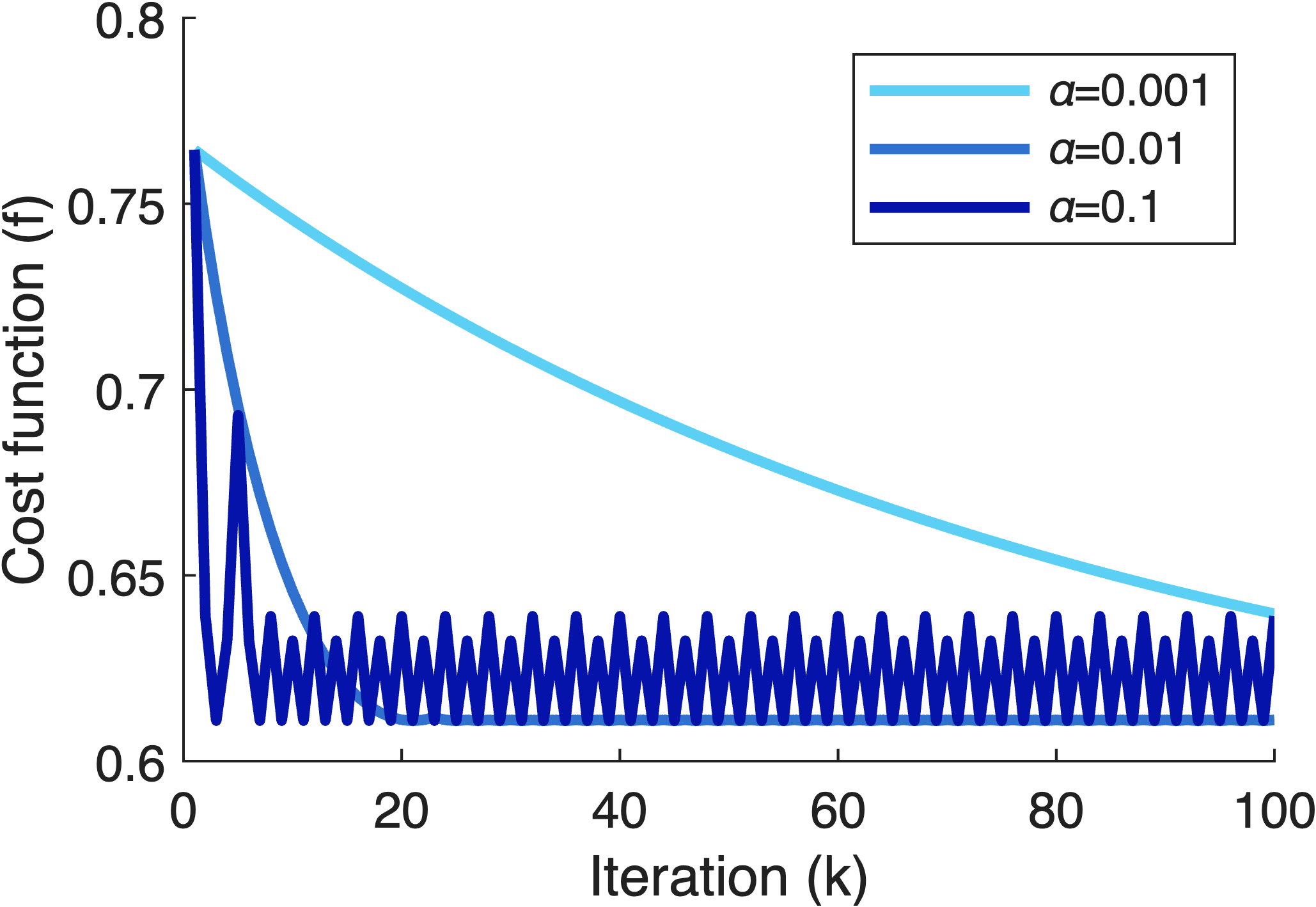}
    \includegraphics[width=0.2\linewidth]{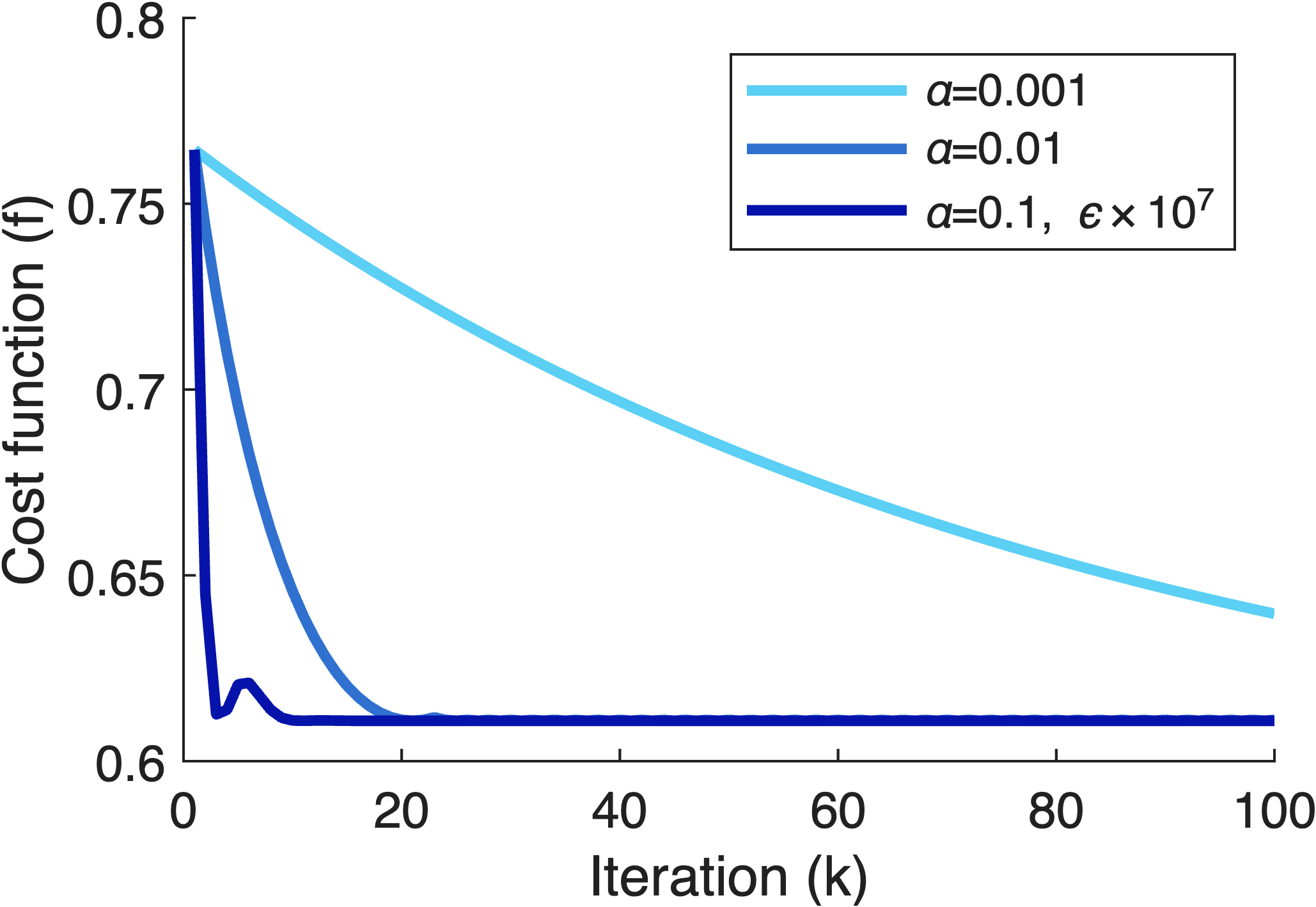}
    \includegraphics[width=0.2\linewidth]{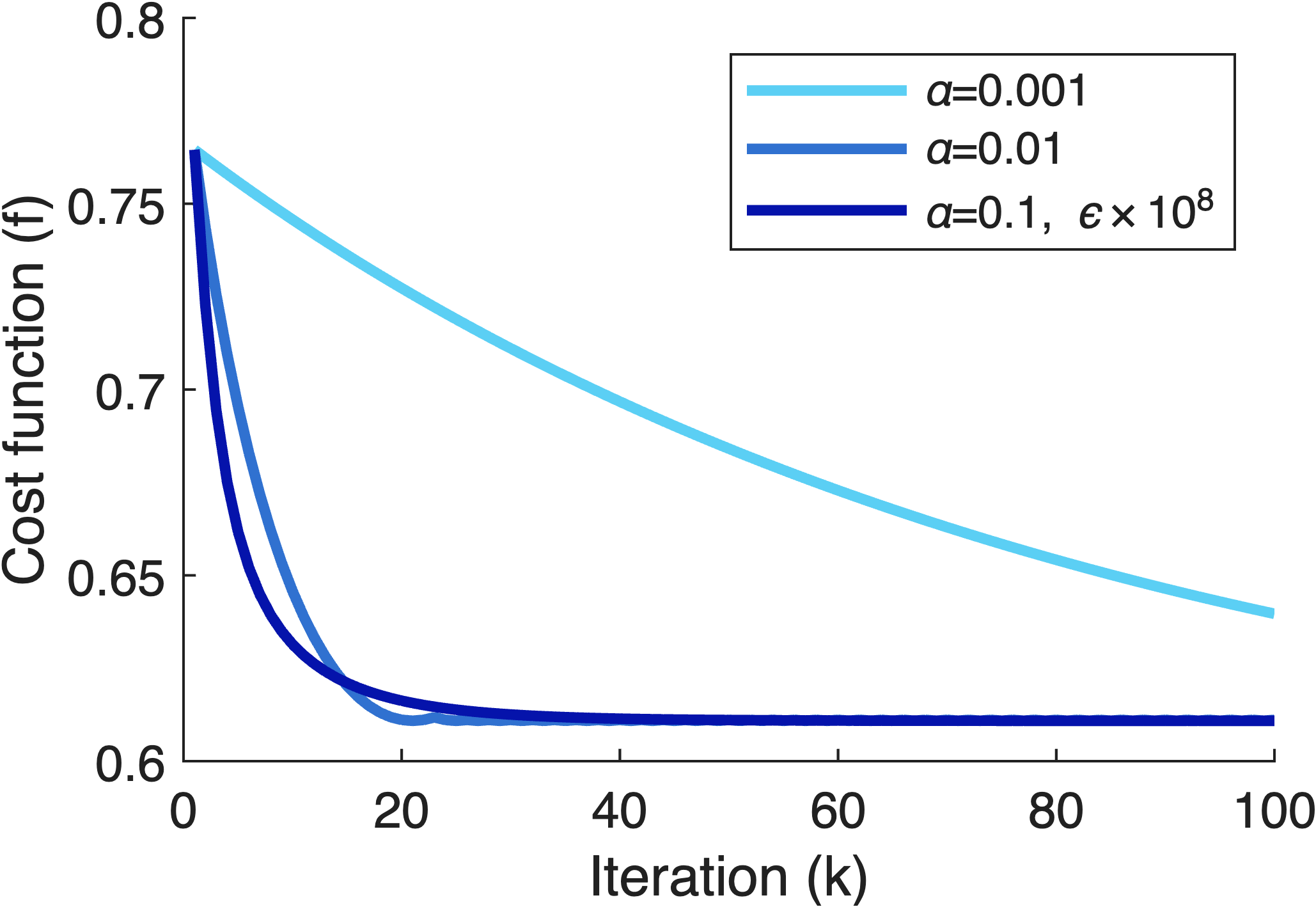}\\
    \includegraphics[width=0.2\linewidth]{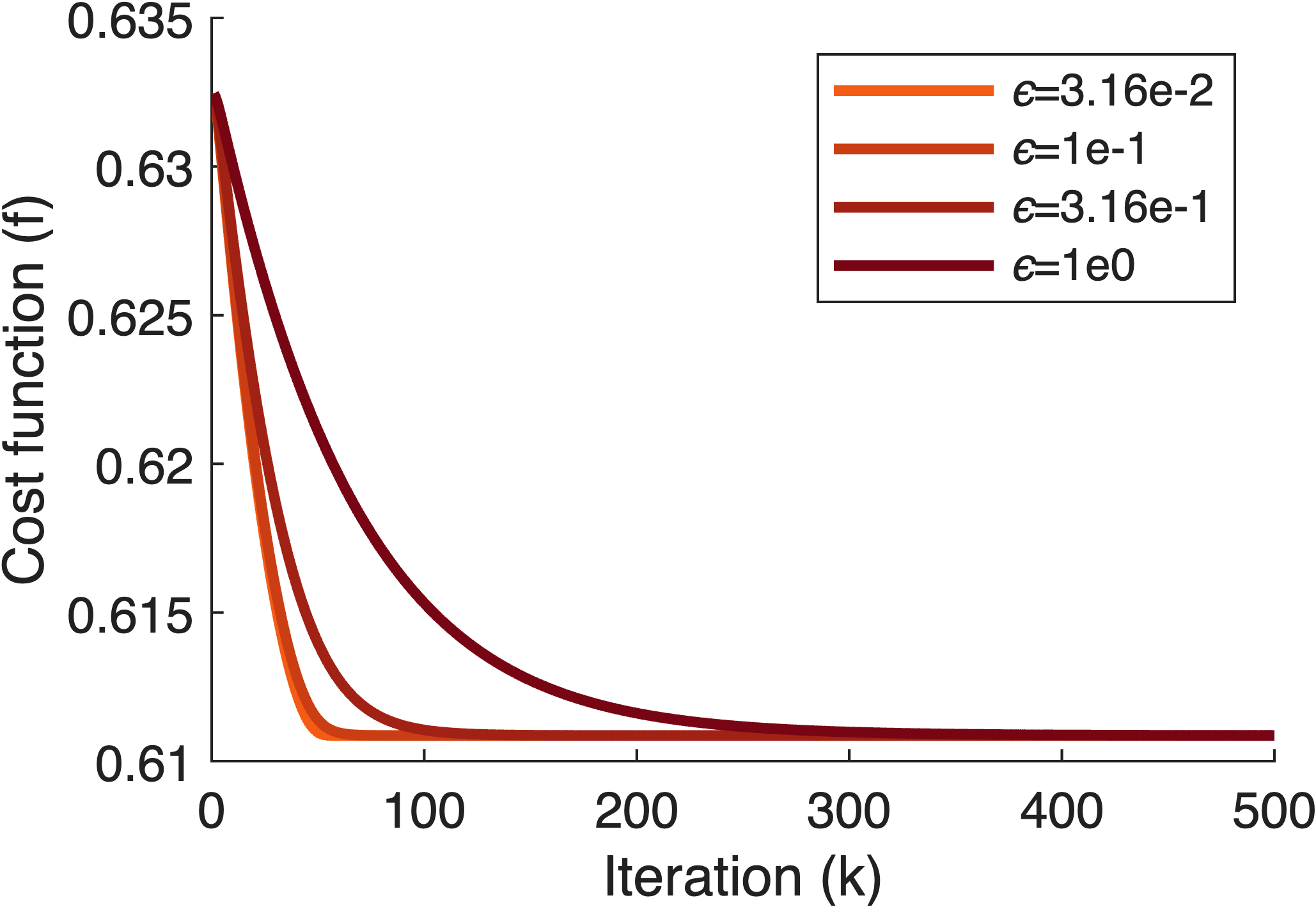}
    \includegraphics[width=0.2\linewidth]{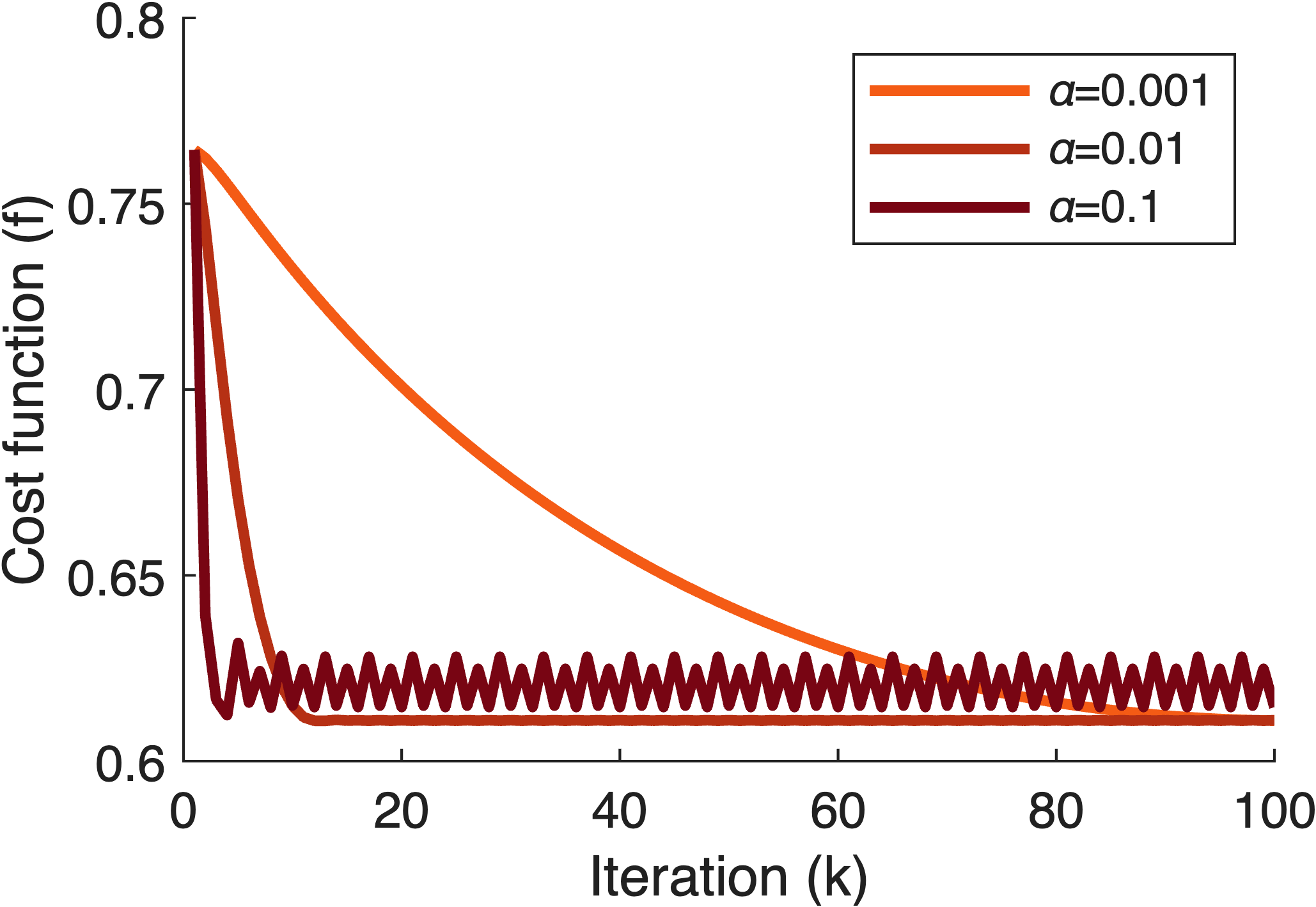}
    \includegraphics[width=0.2\linewidth]{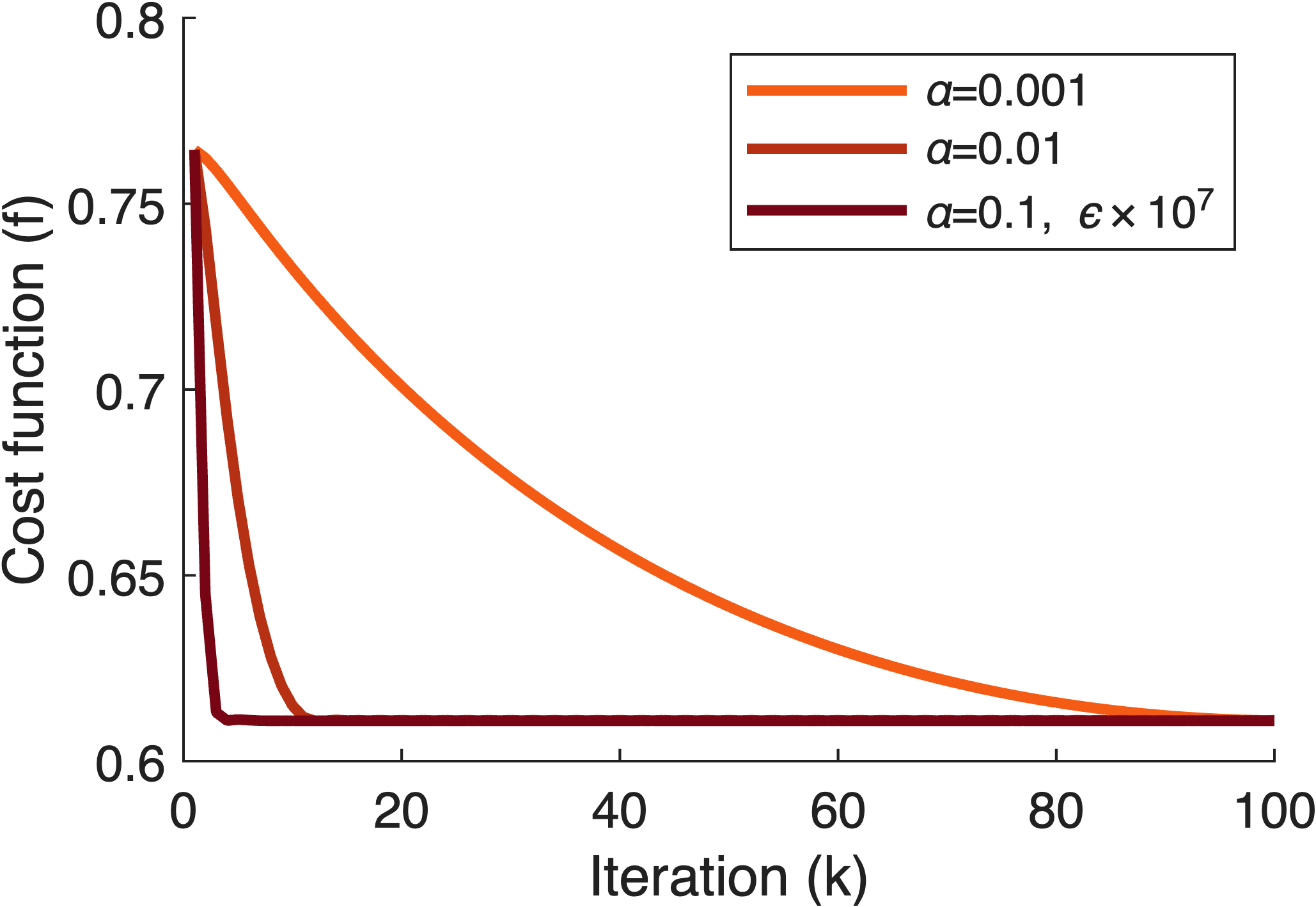}
    \includegraphics[width=0.2\linewidth]{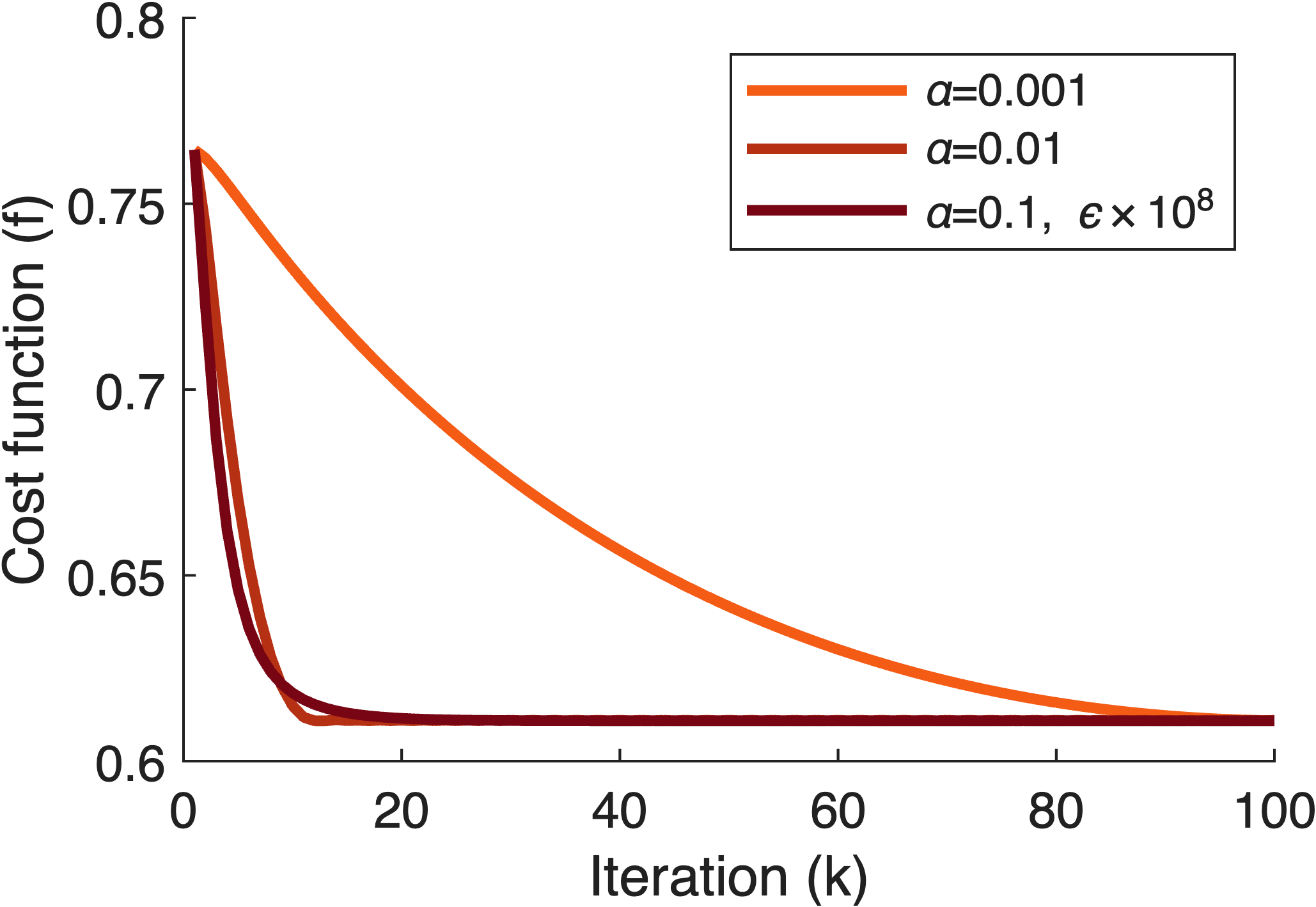}\\
    ~~~~(a)\qquad\qquad\qquad~~~~(b)\quad\qquad\qquad\qquad(c)\qquad\qquad\qquad~~~(d)
    \caption{Synthetic simulations for Canonical Muon \eqref{eq:Muon-Orig} (first row, gray), Soft Muon \eqref{eq:Muon-Soft} (second row, blue), and Muesterov \eqref{eq:Muesterov-def} (third row, red). Column (a) depicts four training cost trajectories for $\alpha=10^{-3}$, $\beta=0.5$, and increasing values of $\eps$. Column (b) depicts the training cost for $\beta=0.5$, $\eps=10^{-7}$, and increasing values of $\alpha$. Columns (c) and (d) repeat the same simulations as column (b) but for much larger values of $\eps$ for the larger $\alpha$.}
    \label{fig:SyntheticSimulationSet}
\end{figure}

To build intuition for the role of the hyperparameters $\alpha$ and $\eps$, we consider a simple scalar cross-entropy minimization problem for which closed-form expressions are available. This setting isolates the effect of normalization and regularization, and provides a transparent testbed for understanding the dynamics of Muon, Soft Muon, and Muesterov.

Specifically, let the true label follow a Bernoulli distribution $B_p$ with $p=0.7$, and consider the parametric family $q_{\theta}(X=1)=\theta$, $q_{\theta}(X=0)=1-\theta$. The associated cross-entropy loss and its derivative are given by $\loss(\theta)=-p\log(\theta)-(1-p)\log(1-\theta)$, and $\loss'(\theta)=-\frac{p}{\theta}+\frac{1-p}{1-\theta}$,
%
%
with unique minimizer $\theta^*=p$. Furthermore, $\loss$ satisfies a global P\L{} inequality on $(0,1)$ with constant $\mu = 2\left(p^{1/3}+(1-p)^{1/3}\right)^3$. In this scalar setting, the soft-sign operator makes the effect of $\eps$ particularly transparent, $\SSat(h,\eps)={h}/{\eps}+O(h^3)~ \text{near } h=0$, while $\SSat(h,\eps)\approx \sign(h)
~ \text{when } |h|\gg \eps$.

Figure~\ref{fig:SyntheticSimulationSet} compares Canonical Muon, Soft Muon, and Muesterov under different choices of $\alpha$ and $\eps$. For small regularization, all methods behave similarly and exhibit persistent oscillations characteristic of nearly sign-based updates. Increasing $\eps$ transitions Soft Muon and Muesterov toward a gradient-like regime with effective gain proportional to $1/\eps$, stabilizing the dynamics and restoring convergence. Canonical Muon exhibits the same qualitative transition: for small $\delta$, its trajectories are nearly indistinguishable from Soft Muon, while larger $\delta$ introduces quantitative differences due to the higher-order structure of the Newton-Schulz polynomial.

The experiments further suggest that $\alpha$ and the normalization regularizer must be tuned jointly. Large learning rates combined with weak regularization produce sustained oscillations, whereas increasing $\eps$ (or $\delta$) restores stable behavior and can substantially accelerate convergence relative to the nearly sign-based regime. Excessively large regularization, however, eventually slows convergence. Experiments also indicate potential advantage of using Muesterov over Muon, with the former reaching lower loss values faster than both Canonical and Soft Muon under identical hyperparameters in a simple 1D loss simulation.

We next examine the problem of training GPT models, one of the primary practical applications of Muon.

\subsection{Training nanoGPT}
\label{sec:nanogpt-simulations}

\begin{figure}[t]
    \centering
    \includegraphics[width=0.8\linewidth]{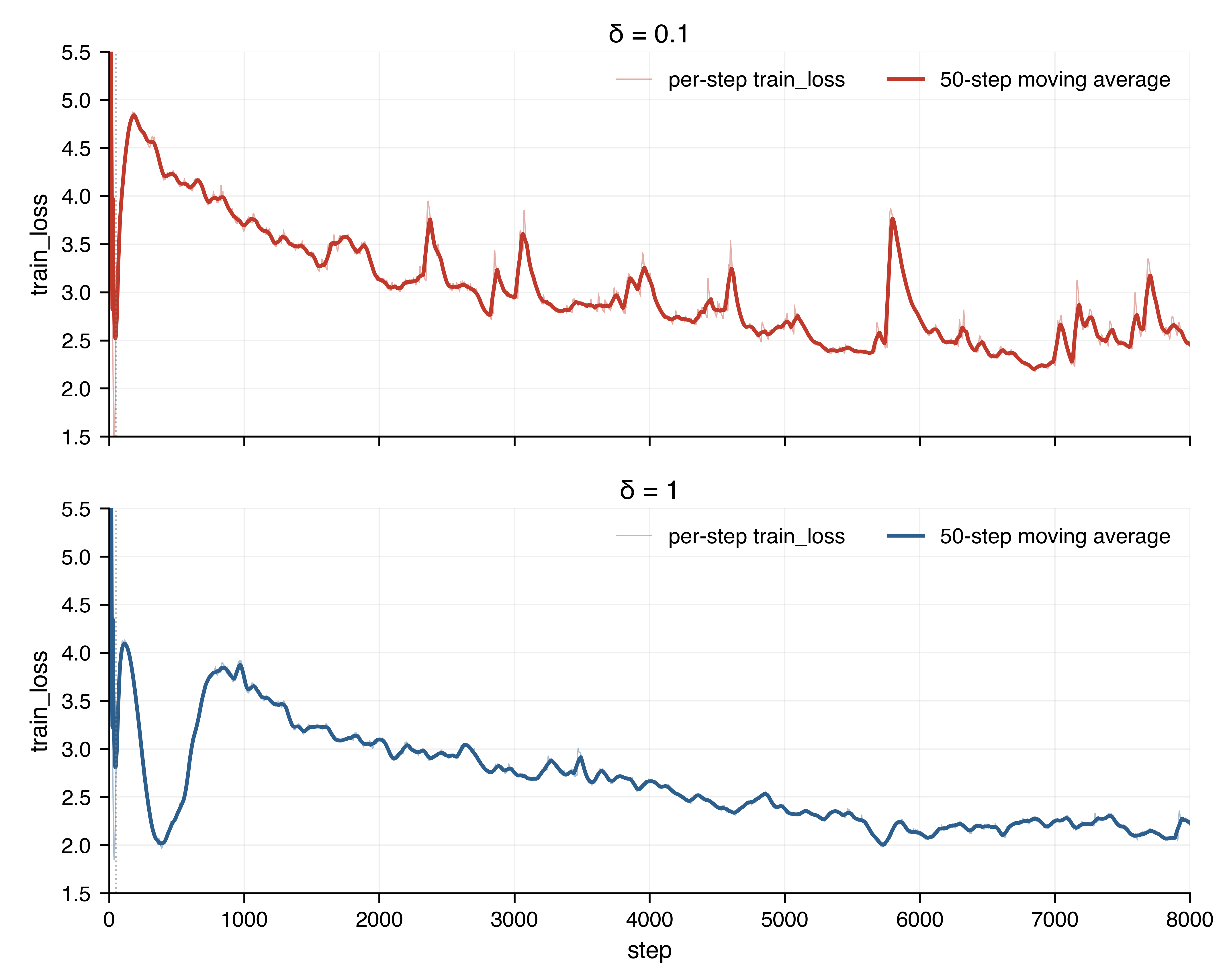}
    \caption{                                                                        
  Fixed-batch nanoGPT overfitting at $100\times$ the baseline script-level         
  learning rate ($\alpha = 0.36$ instead of\ $0.0036$). The Muon optimizer trains the     
  transformer blocks at $0.1\alpha$ while AdamW remains at the baseline rate       
  ($0.0036$) on the output head. After the first minibatch is drawn it is          
  reused at every step, so the plotted train loss is a fixed-batch diagnostic      
  rather than a sampled training curve. Faint traces show the per-step values;     
  bold traces show a $50$-step moving average.                    
  \textbf{Top Panel.} With Newton-Schulz regularizer $\delta = 0.1$, the trajectory      
  exhibits sustained peak--trough oscillations superimposed on an overall
  descent.                                                                         
  \textbf{Bottom Panel.} With $\delta = 1$, the same learning rate produces a single
  early overshoot followed by smooth, near-monotone descent toward a fixed         
  point. The contrast is consistent with the soft-sign / Newton-Schulz            
  analysis: larger $\delta$ reduces the effective gain of the normalized           
  update and damps the oscillatory regime.                                         
  }
    \label{fig:nanogpt-fixed-batch}
\end{figure}

The preceding experiments isolate, in a low-dimensional setting, how the
learning rate $\alpha$ and the regularization in the soft-sign/Newton-Schulz
normalization interact to produce either oscillatory behavior or convergence.
We next investigate whether the same qualitative mechanism is visible in a practical transformer-training setting. Rather than performing a standard large-scale training benchmark, the goal of this experiment is to evaluate if intuition from theory is verifiable using a real model: we intentionally construct a controlled setting in which oscillatory behavior becomes visible, allowing us to directly examine the role of the Newton-Schulz regularizer in the practical Muon implementation. 

To achieve this goal, we edited the original script from \citep{modded_nanogpt_2025} in the following ways: we increased the script-level learning rate from $0.0036$ to $0.36$, with the AdamW learning rate for the output-head parameters kept
at its baseline value, of $0.0036$; we also used a short $50$-iteration learning rate warmup, removed the warmdown, and fixed all seeds. Crucially, we also implemented the optimizer to be fixed to only one batch: after the initial batch is drawn, the same batch is reused throughout training. The plotted training loss should therefore be read as a fixed-batch loss, rather than as a standard minibatch-sampled training curve.

Figure~\ref{fig:nanogpt-fixed-batch} shows that the smaller-regularization run develops visible oscillatory behavior. Although the loss decreases overall, the trajectory is repeatedly interrupted by sharp spikes followed by recovery. Since the same batch is reused throughout the run, these oscillations are not caused
by changing minibatches. Rather, they are consistent with the regularization mechanism identified in the synthetic experiments and convergence analysis.

Increasing the regularizer from $\delta=0.1$ to $\delta=1$ substantially smooths
the trajectory. The run still has an initial transient, but the repeated
large-amplitude peak-trough events are strongly reduced. This agrees with the
soft-sign interpretation of the Newton-Schulz regularizer. A larger
regularizer reduces the effective gain of the normalized update when the
momentum matrix is small relative to the normalization scale. Thus, while this fixed-batch nanoGPT experiment is intentionally diagnostic rather than benchmark-oriented, it nevertheless exhibits the same qualitative $\alpha$-regularization interaction predicted by the synthetic experiments and theoretical analysis.

\section{Conclusion}

In this paper we established the first asymptotic convergence guarantees for Muon by interpreting the regularization implicit in its Newton-Schulz implementation as a bounded preconditioning mechanism. This perspective exposes Muon as a preconditioned Polyak heavy-ball method and enables a classical Lyapunov analysis under suitable hyperparameter choices. Motivated by the same interpretation, we introduced Muesterov, a Nesterov-based variant that preserves the normalization structure of Muon while enjoying analogous convergence guarantees.

Beyond the theoretical results, the synthetic and transformer-training experiments suggest that the interaction between the learning rate and the Newton-Schulz regularizer plays a central role in controlling the transition between oscillatory sign-like dynamics and stable convergence. Understanding how these regularization mechanisms behave in practical large-scale training regimes remains an important direction for future work.

\bibliographystyle{plainnat}
\bibliography{references}

\appendix
\section{Proofs of theoretical claims}

\subsection{Proof of Proposition \ref{prop:Muon-OrigIdeal-Equivalence}}

\paragraph{Statement.} Let $\NS(\Bk)=\sign(\Bk)$. Also, for any $x_0\in\PSp$ let $\{\xktilde,\Bktilde\}$ and $\{\xkbar,\Hkbar\}$ be the sequences generated by \eqref{eq:Muon-Orig} and \eqref{eq:Muon-Ideal} respectively, each initialized at $\{x_0, B_0\}$ and $\{x_0, \Hz\}$ with $B_0=(1-\beta)^{-1}\Hz$. Then, for every $k\in\mathbb{N}$ it holds that $\xktilde=\xkbar$ and $\tilde \Bk=(1-\beta)^{-1}\Hkbar$.
\begin{proof}
    The proof is simply algebraic. First note that for any given sequence $\{\xk\}\subset\PSp$ and for $B_0=(1-\beta)^{-1}\Hz$ it holds that
    \begin{equation*}
        \Hk=\beta^k \Hz+(1-\beta)\sum_{i=0}^{k-1}\beta^{k-1-i}\dLoss[\x_i]
        =(1-\beta)\left(\beta^k B_0+\sum_{i=0}^{k-1}\beta^{k-1-i}\dLoss[\x_i]\right)
        =(1-\beta)\Bk,
    \end{equation*}
    and thus $\sign(\Hk)=\sign((1-\beta)\Bk)=\sign(\Bk)$. Since both $\xktilde$ and $\xkbar$ are initialized at the same point and have the same update law, it implies that $\{\xkbar,\Hkbar\}=\{\xktilde,(1-\beta)\Bktilde\}$, proving the claim.
\end{proof}

\subsection{Proof of Proposition \ref{prop:Muon-OrigSoft-Equiv}}

\paragraph{Statement.} Let $\NS(\Bk)=\SSat(\Bk,\eps)$ for some $\eps>0$. Also, for any $x_0\in\PSp$ let $\{\xktilde,\Bktilde\}$ and $\{\xkbar,\Hkbar\}$ be the sequences generated by \eqref{eq:Muon-Orig} and \eqref{eq:Muon-Soft} respectively, with each initialized at $\{x_0, B_0\}$ and $\{x_0, \Hz\}$ with $B_0=(1-\beta)^{-1}\Hz$ and with constant $\eps(1-\beta)$ in \eqref{eq:Muon-Soft}. Then, for every $k\in\mathbb{N}$ it holds that $\xktilde=\xkbar$ and $\tilde \Bk=(1-\beta)^{-1}\Hkbar$.
\begin{proof}
    The proof is, again, simply algebraic and follows almost identically to the previous proposition. First note that for any given sequence $\{\xk\}\subset\PSp$ and for $B_0=(1-\beta)^{-1}\Hz$ it holds that
    \begin{equation*}
        \Hk=\beta^k \Hz+(1-\beta)\sum_{i=0}^{k-1}\beta^{k-1-i}\dLoss[\x_i]
        =(1-\beta)\left(\beta^k B_0+\sum_{i=0}^{k-1}\beta^{k-1-i}\dLoss[\x_i]\right)
        =(1-\beta)\Bk,
    \end{equation*}
    and thus $\SSat(\Hk,\eps(1-\beta))=\SSat((1-\beta)\Bk,\eps(1-\beta))=\SSat(\Bk,\eps)$. Since both $\xktilde$ and $\xkbar$ are initialized at the same point and have the same update law, it implies that $\{\xkbar,\Hkbar\}=\{\xktilde,(1-\beta)\Bktilde\}$, proving the claim.
\end{proof}

\subsection{Proof of Theorem \ref{thm:PrecHeavyBall}}

\paragraph{Statement.} Consider the preconditioned heavy-ball iteration presented in~\eqref{eq:PrecHeavyBall}, and assume
the preconditioning matrix $\Pk$ is uniformly bounded  -- \ie~there exist $\Pmax>\Pmin>0$ such that $\Pmin I\preceq \Pk\preceq \Pmax I$ for all $k\in\mathbb{N}$.
Then there exist a constant $\beta^\star\in(0,1)$ and a positive function
$\alpha^\star:[0,\beta^\star)\to\R_{>0}$ such that for every $\beta\in[0,\beta^\star)$
and every $\alpha\in(0,\alpha^\star(\beta))$, it holds that
\begin{equation*}
  \lim_{k\to\infty}\norm{\dLoss[\xk]} = 0.
\end{equation*}

\begin{proof}
Let $f^\star:=\inf_{x}f(x)$ and define $e_k := \Hk - \dLoss[\xk]$,
which satisfies $e_0 = 0$ by initialization. Henceforth adopt the shorthands
$\Phi_k := \Phi(\xk,e_k)$, $G_k := \dLoss[\xk]$, and $d_k := \Pk^{-1}\Hk$,
and define the Lyapunov function
\begin{equation}
  \Phi(\xk, e_k) := \loss(\xk) - f^\star + A\norm{e_k}^2, \label{eq:lyapunov}
\end{equation}
with $A := 1/(4L)$. We prove convergence by showing that for sufficiently small $\alpha$ and $\beta$, $\Phi_k$ is a Lyapunov
function of the discrete-time system~\eqref{eq:PrecHeavyBall}, i.e.\ that
$\Phi_{k+1} - \Phi_k < 0$.

We first look at $\loss(\xp) - \loss(\xk)$. By $L$-smoothness of $f$,
\[
  \loss(\xp) \leq \loss(\xk)
    - \alpha\inner{G_k}{d_k}
    + \frac{L\alpha^2}{2}\norm{d_k}^2.
\]
Writing $\Hk = G_k + e_k$ and using $(1/\Pmax)\Id\preceq \Pk^{-1}\preceq(1/\Pmin)\Id$,
\[
  \inner{G_k}{d_k}
    = \inner{G_k}{\Pk^{-1}G_k} + \inner{G_k}{\Pk^{-1}e_k}
    \geq \frac{1}{\Pmax}\norm{G_k}^2 - \frac{1}{\Pmin}\norm{G_k}\norm{e_k},
\]
and $\norm{d_k} \leq (1/\Pmin)\norm{\Hk} \leq (1/\Pmin)(\norm{G_k}+\norm{e_k})$, so
\begin{equation}
  \loss(\xp) - \loss(\xk)
    \leq -\!\left(\frac{\alpha}{\Pmax} - \frac{L\alpha^2}{\Pmin^2}\right)\norm{G_k}^2
         + \frac{\alpha}{\Pmin}\norm{G_k}\norm{e_k}
         + \frac{L\alpha^2}{\Pmin^2}\norm{e_k}^2. \label{eq:fdescent}
\end{equation}

Next, consider $\norm{e_{k+1}}^2 - \norm{e_k}^2$. From the preconditioned heavy-ball
recursion~\eqref{eq:PrecHeavyBall} and the definition of $e_k$,
\[
  e_{k+1} = \beta e_k + \bigl(G_k - G_{k+1}\bigr),
\]
and by $L$-smoothness, $\norm{G_{k+1} - G_k} \leq L\alpha\norm{d_k} \leq
(L\alpha/\Pmin)(\norm{G_k}+\norm{e_k})$. Applying $(a+b)^2\leq 2a^2 + 2b^2$,
\begin{equation}
  A\bigl(\norm{e_{k+1}}^2 - \norm{e_k}^2\bigr)
    \leq \frac{4AL^2\alpha^2}{\Pmin^2}\norm{G_k}^2
    - A\!\left(1 - 2\beta^2 - \frac{4L^2\alpha^2}{\Pmin^2}\right)\norm{e_k}^2.
  \label{eq:edelta}
\end{equation}

Writing the iteration for $\Phi_k$ by adding~\eqref{eq:fdescent}
and~\eqref{eq:edelta} results in
\begin{equation}
  \Phi_{k+1} - \Phi_k
    \leq -\underbrace{\left(\frac{\alpha}{\Pmax} - \frac{2L\alpha^2}{\Pmin^2}\right)}_{=:\,C_G}
          \norm{G_k}^2
         +\underbrace{\frac{\alpha}{\Pmin}}_{=:\,C_c}\norm{G_k}\norm{e_k}
         -\underbrace{\left(\frac{1-2\beta^2}{4L} - \frac{2L\alpha^2}{\Pmin^2}\right)}_{=:\,C_e}
          \norm{e_k}^2,
  \label{eq:lyapdiff}
\end{equation}
where we used $A = 1/(4L)$ to collect the $2L\alpha^2/\Pmin^2$ factor appearing
identically in both the $\norm{G_k}^2$ and $\norm{e_k}^2$ terms.
Now $\Phi_{k+1} - \Phi_k \leq 0$ whenever the quadratic form
$C_G\norm{G_k}^2 + C_e\norm{e_k}^2 - C_c\norm{G_k}\norm{e_k}$ is positive
semi-definite, which by Sylvester's criterion holds if and only if $C_G > 0$,
$C_e > 0$, and $4C_GC_e > C_c^2$.

For fixed $\beta\in[0,1)$, the condition $C_G > 0$ reads
\[
  \alpha < \frac{\Pmin^2}{2L\Pmax} =: \alpha_1,
\]
and the condition $C_e > 0$ reads
\[
  \alpha < \frac{\Pmin}{2L}\sqrt{\frac{1-2\beta^2}{2}} =: \alpha_2(\beta),
\]
with no constraint for $\beta = 0$ and $\alpha_2(\beta) > 0$ for all
$\beta < 1/\sqrt{2}$. Since $4C_GC_e - C_c^2$ vanishes at $\alpha = 0$,
we factor out $\alpha > 0$ and the discriminant condition $4C_GC_e > C_c^2$ is
equivalent to $Q(\alpha) > 0$, where
\begin{equation}
  Q(\alpha) := \frac{16L^2}{\Pmin^4}\,\alpha^3
             - \frac{8L}{\Pmax\Pmin^2}\,\alpha^2
             + \frac{4\beta^2-3}{\Pmin^2}\,\alpha
             + \frac{1-2\beta^2}{L\Pmax}. \label{eq:Q}
\end{equation}
The constant term in~\eqref{eq:Q} is strictly positive if and only if
$\beta^2 < 1/2$, i.e.\ $\beta < \beta^\star$ where
\begin{equation}
  \beta^\star := \frac{1}{\sqrt{2}} \in (0,1). \label{eq:betastar}
\end{equation}
Since the leading coefficient of~\eqref{eq:Q} is positive and the constant term
is positive for $\beta < \beta^\star$, we have $Q(0)>0$ and $Q(\alpha)\to-\infty$ as $\alpha\to-\infty$, so $Q$ has at least one negative root. Moreover, $Q(\alpha_1)<0$ and $Q(\alpha)\to+\infty$ as $\alpha\to+\infty$, hence $Q$ has two positive roots $\alpha_-(\beta) \leq \alpha_+(\beta)$ and is positive on $(0,\alpha_-(\beta))$.

To verify that $\alpha_-(\beta) < \alpha_1$, note that at $\alpha=\alpha_1$ we have $C_G=0$, so $4C_GC_e - C_c^2 = -C_c^2 < 0$, which implies $Q(\alpha_1)<0$. Hence $\alpha_1 \in [\alpha_-(\beta),\alpha_+(\beta)]$ and $\alpha_-(\beta)<\alpha_1$. By the same argument, $Q(\alpha_2(\beta))<0$, so $\alpha_-(\beta)<\alpha_2(\beta)$.

Setting $\alpha^\star(\beta):=\alpha_-(\beta)$, all three conditions $C_G>0$, $C_e>0$, and $4C_GC_e>C_c^2$ hold for every $\alpha\in(0,\alpha^\star(\beta))$.

It remains to convert positive definiteness of the quadratic form into a bound with
independent squared terms. Applying Young's inequality with
$\delta = \sqrt{C_e/C_G} > 0$ to the cross term gives
\[
  C_c\norm{G_k}\norm{e_k}
    \leq \frac{C_c}{2}\sqrt{\frac{C_G}{C_e}}\norm{G_k}^2
       + \frac{C_c}{2}\sqrt{\frac{C_e}{C_G}}\norm{e_k}^2,
\]
and therefore
\begin{equation}
  \Phi_{k+1} - \Phi_k \leq -c_1\norm{G_k}^2 - c_2\norm{e_k}^2, \label{eq:descent}
\end{equation}
where $c_1 := C_G - (C_c/2)\sqrt{C_G/C_e} > 0$ and
$c_2 := C_e - (C_c/2)\sqrt{C_e/C_G} > 0$, both of which are strictly positive
since $4C_GC_e > C_c^2$ implies $2\sqrt{C_GC_e} > C_c$. Since $\Phi_k \geq 0$
by definition, summing over $k$ gives
\[
  \sum_{k=0}^{\infty}\norm{G_k}^2 \leq \frac{\Phi_0}{c_1} < \infty,
\]
and in particular $\norm{G_k}^2 \to 0$, which proves the statement.
\end{proof}

\subsection{Proof of Corollary \ref{cor:MuonSoft-LinearConvergence}}

\paragraph{Statement.} Let $f$ satisfy a global Polyak--\L{}ojasiewicz inequality with constant $\mu>0$, and let $P_k$ be uniformly bounded.
    Then, for every $\beta\in[0,\beta^\star)$ and every
    $\alpha\in(0,\alpha^\star(\beta))$ (where $\beta^\star$ and $\alpha^\star(\cdot)$ are as in Theorem \ref{thm:PrecHeavyBall}), there exist constants
    $C>0$ and $\rho\in(0,1)$ such that
    \begin{equation}
        f(x_k)-f^\star \leq C\rho^k,
        \qquad \forall k\in\mathbb N.
    \end{equation}

\begin{proof}
From the proof of Theorem \ref{thm:PrecHeavyBall}, for every $\beta\in[0,\beta^\star)$ and every
$\alpha\in(0,\alpha^\star(\beta))$, there exist constants $c_1, c_2 > 0$ such
that~\eqref{eq:descent} holds. We now upper-bound $\Phi_k$ in terms of
$\norm{G_k}^2$ and $\norm{e_k}^2$. By the Polyak--\L ojasiewicz inequality
applied at $\xk$,
\[
  \loss(\xk) - f^\star \leq \frac{1}{\mu}\norm{G_k}^2,
\]
and recalling the definition~\eqref{eq:lyapunov} of $\Phi_k$,
\begin{equation}
  \Phi_k \leq \frac{1}{\mu}\norm{G_k}^2 + A\norm{e_k}^2
         \leq \tilde{C}\bigl(c_1\norm{G_k}^2 + c_2\norm{e_k}^2\bigr),
  \label{eq:phi-upper}
\end{equation}
where $\tilde{C} := \max\{1/(\mu c_1),\, A/c_2\} \in (0,\infty)$.
Combining~\eqref{eq:descent} and~\eqref{eq:phi-upper},
\[
  \Phi_{k+1} - \Phi_k
    \leq -\bigl(c_1\norm{G_k}^2 + c_2\norm{e_k}^2\bigr)
    \leq -\frac{1}{\tilde{C}}\,\Phi_k.
\]
Therefore, defining $\delta := 1/\tilde{C} > 0$, we obtain
$\Phi_{k+1} \leq (1-\delta)\Phi_k$. Since $\Phi_k \geq 0$ for all $k$, we may take $\rho\in(0,1)$ such that $\Phi_{k+1}\leq \rho\Phi_k$.
Iterating gives
\begin{equation*}
    \Phi_k\leq \rho^k\Phi_0.
\end{equation*}
Finally, since $f(x_k)-f^\star\leq \Phi_k$, it follows that
\begin{equation*}
    f(x_k)-f^\star\leq \Phi_0\rho^k.
\end{equation*}
Setting $C:=\Phi_0$ concludes the proof.
\end{proof}

\subsection{Proof of Theorem \ref{thm:muon-compact}}

\paragraph{Statement.} Let $\{\xk,\Hk\}$ be the sequence generated by~\eqref{eq:Muon-Soft} from the initial condition $\{x_0, \dLoss[x_0]\}$. Assume $f$ is proper, $L$-smooth, has a unique global minimizer $x^\star$, and satisfies a global Polyak--\L ojasiewicz inequality with constant $\mu>0$. Then for every $\beta\in[0,1)$ there exists a positive function $\alpha^\star:[0,1)\to\R_{>0}$ such that for every $\alpha\in(0,\alpha^\star(\beta))$ the sequences $\{\xk\}$ and $\{\Hk\}$ are bounded.

\begin{proof}
For every $C\geq \loss(x_0)-f^\star$, fix an open neighbourhood $D$ of $x^\star$
with $\overline{D}\subset\operatorname{int}(D_C)$, where $D_C:=\{f\leq f^\star+C\}$, and
define $r_D:=\operatorname{dist}(\overline{D},D_C^c)>0$,
\begin{equation}
  \gamma(C) := \min_{x\in D_C\setminus D}\|\nabla\!f(x)\|>0,
  \qquad
  \Gamma(C) := \max_{x\in D_C}\|\nabla\!f(x)\|<\infty, \label{eq:gammadefs}
\end{equation}
and
\begin{equation}
  \alpha^\star(\beta,C) :=
    \min\!\left\{
      \frac{2\gamma(C)^2(1-\beta)}{L\bigl(2\beta\Gamma(C)\sqrt{m} + m(1-\beta)\sqrt{\Gamma(C)^2+\eps^2}\bigr)},\;
      r_D(C)/\sqrt{m}
    \right\} > 0. \label{eq:alphastarC}
\end{equation}
Since $\alpha^\star(\beta,C)>0$ for every $C\geq \loss(x_0)-f^\star$, define
$\alpha^\star(\beta):=\sup_{C\geq \loss(x_0)-f^\star}\alpha^\star(\beta,C)>0$.
For any $\alpha\in(0,\alpha^\star(\beta))$, there exists $C\geq \loss(x_0)-f^\star$
such that $\alpha<\alpha^\star(\beta,C)$. Fix such $C$ and write
$\gamma:=\gamma(C)$, $\Gamma:=\Gamma(C)$ for brevity.
 
We prove by induction that $\xk\in D_C$ and $\|\Hk\|\leq\Gamma$ for all
$k\geq 0$. The base case holds since $x_0\in D_C$ by definition of $C$ and
$\|\Hz\|=\|\dLoss[x_0]\|\leq\Gamma$. Assume $x_0,\ldots,\xk\in D_C$
and $\|\Hi{j}\|\leq\Gamma$ for all $j\leq k$. Since $e_0=\Hz-\dLoss[x_0]=0$
and the lag error $e_j:=\Hi{j}-\dLoss[x_j]$ satisfies the recursion
$e_{j+1} = \beta e_j + (\dLoss[x_j]-\dLoss[x_{j+1}])$,
iterating with $\|\dLoss[x_{j+1}]-\dLoss[x_j]\|\leq L\sqrt{m}\alpha$
(from $L$-smoothness and~\eqref{eq:stepbound}) gives
\begin{equation}
  \|e_k\| \leq L\sqrt{m}\alpha\sum_{j=0}^{k-1}\beta^j < \frac{\sqrt{m}L\alpha}{1-\beta},
  \label{eq:ebound}
\end{equation}
uniformly in $k$.
The bound on $\|\Hp\|$ then follows directly:
\begin{equation*}
  \|\Hp\| \leq \beta\|\Hk\|+(1-\beta)\|\dLoss[\xk]\|\leq\Gamma.
\end{equation*}
 
It remains to show $\xp\in D_C$. Since $\xp=\xk-\alpha d_{\eps}(\Hp)$,
we consider two cases. If $\xk\in D$, then
$\|\xp-\xk\|=\alpha\|d_{\eps}(\Hp)\|<\alpha\sqrt{m}\leq r_D$ by~\eqref{eq:stepbound}
and the choice $\alpha<\alpha^\star(\beta,C)\leq r_D$, so $\xp$ cannot exit $D_C$.
If $\xk\in D_C\setminus D$, then $\|\dLoss[\xk]\|\geq\gamma$ and by $L$-smoothness,
\begin{equation*}
  \loss(\xp) \leq \loss(\xk) - \alpha\inner{\dLoss[\xk]}{d_{\eps}(\Hp)}
                          + \frac{L\alpha^2\dimm}{2}.
\end{equation*}
The step $d_{\eps}(\Hp)$ involves $\Hp=\dLoss[\xk]+h_k$, where
the prospective lag $h_k:=\Hp-\dLoss[\xk]=\beta e_k$ satisfies
$\|h_k\|=\beta\|e_k\|<\frac{\beta\sqrt{m}L\alpha}{1-\beta}$ by~\eqref{eq:ebound}.
Using $\|\Hp\|\leq\Gamma$ and $\Hp=\dLoss[\xk]+h_k$, we have
\begin{align*}
  \inner{\dLoss[\xk]}{d_{\eps}(\Hp)}
  &=\inner{\dLoss[\xk]}{(\Hp\Hp^\top+\eps^2 I)^{-1/2}\Hp} \\
  &=\inner{\dLoss[\xk]}{(\Hp\Hp^\top+\eps^2 I)^{-1/2}\dLoss[\xk]}
    \\&\quad+\inner{\dLoss[\xk]}{(\Hp\Hp^\top+\eps^2 I)^{-1/2}h_k} \\
  &\geq\frac{\|\dLoss[\xk]\|^2}{\sqrt{\|\Hp\|^2+\eps^2}}
    -\frac{1}{\eps}\|\dLoss[\xk]\|\|h_k\| \\
  &\geq\frac{\gamma^2}{\sqrt{\Gamma^2+\eps^2}}
    -\frac{\Gamma}{\eps}\|h_k\| \\
  &>\frac{\gamma^2}{\sqrt{\Gamma^2+\eps^2}}
    -\frac{\beta\Gamma\sqrt{m}L\alpha}{\eps(1-\beta)}.
\end{align*}
Substituting,
\begin{equation*}
  \loss(\xp)
    \leq \loss(\xk)
      - \alpha\left(
        \frac{\gamma^2}{\sqrt{\Gamma^2+\eps^2}}
        - \frac{\beta\Gamma\sqrt{m}L\alpha}{\eps(1-\beta)}
      \right)
      + \frac{L\alpha^2 m}{2}
    =: \loss(\xk) - c_\alpha,
\end{equation*}
and $c_\alpha>0$ holds whenever
\begin{equation*}
  \alpha
    < \frac{2\gamma^2(1-\beta)}{L\bigl(2\beta\Gamma\sqrt{m} + m(1-\beta)\sqrt{\Gamma^2+\eps^2}\bigr)},
\end{equation*}
which is precisely the first term in~\eqref{eq:alphastarC}. Therefore
$\loss(\xp)\leq \loss(\xk)\leq f^\star+C$, i.e.\ $\xp\in D_C$.
 
By induction, $\xk\in D_C$ and $\|\Hk\|\leq\Gamma$ for all $k\geq 0$.
Since $f$ is proper, the sublevelset $D_C$ is compact, implying that both sequences are bounded.
\end{proof}

\subsection{Proof of Corollary \ref{cor:muon-convergence}}

\paragraph{Statement.} For the algorithm~\eqref{eq:Muon-Soft}, for every $\beta\in[0,\beta^\star)$
  there exists $\bar\alpha(\beta)>0$ such that for every
  $\alpha\in(0,\bar\alpha(\beta))$ the sequences $\{\xk\}$ and $\{\Hk\}$ are
  bounded and $\lim_{k\to\infty}\|\dLoss[\xk]\|=0$, where
  $\beta^\star=1/\sqrt{2}$ is the threshold from Theorem \ref{thm:PrecHeavyBall}.

\begin{proof}
  Fix any $\beta\in[0,\beta^\star)$ and define
  $\bar\alpha(\beta):=\min\{\alpha_1^\star(\beta),\alpha_2^\star(\beta)\}>0$,
  where $\alpha_1^\star(\beta)$ is as   in Theorem \ref{thm:PrecHeavyBall} and
  $\alpha_2^\star(\beta)$ is the threshold from Theorem \ref{thm:muon-compact}. For any $\alpha\in(0,\bar\alpha(\beta))$, Theorem \ref{thm:muon-compact}
  guarantees that $\{\xk\}$ and $\{\Hk\}$ remain in a compact set, so
  $\|\Hk\|\leq\Gamma$ uniformly and the eigenvalues of
  $\Pk=(\Hk\Hk^\top+\eps^2\Id)^{1/2}$ lie in
  $[{\eps},\sqrt{\Gamma^2+\eps^2}]$ for all $k\geq 0$.
  We then conclude that $\Pk$ is bounded with $\Pmin={\eps}$ and
  $\Pmax=\sqrt{\Gamma^2+\eps^2}$, and Theorem \ref{thm:PrecHeavyBall} gives
  $\lim_{k\to\infty}\|\dLoss[\xk]\|=0$.
\end{proof}

\subsection{Proof of Theorem \ref{thm:precNest}}

\paragraph{Statement.} Consider the preconditioned Nesterov defined in
    \eqref{eq:PrecNesterov-def}, and assume the preconditioning
    matrix $\Pk$ is uniformly bounded -- \ie~there exist $\Pmax>\Pmin>0$ such that $\Pmin I\preceq \Pk\preceq \Pmax I$ for all $k\in\mathbb{N}$.
    Then there exist a constant $\beta^*\in(0,1)$ and a positive
    function $\alpha^*:[0,\beta^*)\to\re_{>0}$ such that for every
    $\beta\in[0,\beta^*)$ and every $\alpha\in(0,\alpha^*(\beta))$,
    it holds that
    \begin{equation*}
        \lim_{k\to\infty}\|\nabla \loss(\yk)\|=0.
    \end{equation*} 

\begin{proof}
    Let $\lmin:=\inf_{\x\in\PSp}\loss(\x)$ and define the function
    \begin{equation}
        \Phi(\xk):=\loss(\xk)-\lmin+A\|\xk-\xm\|^2.
    \end{equation}
    We will prove convergence by showing that $\Phi(\cdot)$ is a
    Lyapunov function of the discrete time system
    \eqref{eq:PrecNesterov-def}. Henceforth consider the shorthand
    $\Phi_k:=\Phi(\xk)$, $\Gk:=\dLoss[\yk]$, and
    $d_k:=\Pk^{-1}\Gk$. Define further $s_k:=\xk-\xm$ and then
    rewrite \eqref{eq:PrecNesterov-def} as
    %
    \begin{subequations}
        \label{eq:thm1proofsystem}
        \begin{align}
            &\xp=\xk+\beta s_k-\alpha d_k, \\
            &s_{k+1}=\beta s_k-\alpha d_k.
        \end{align}
    \end{subequations}
    To show that $\Phi_k$ is a Lyapunov function, we need to show
    that it decreases at every iteration, \ie~we need to show that
    $\Phi_{k+1}-\Phi_k<0$. Consider
    \begin{align*}
        \Phi_{k+1}-\Phi_k
        =
        \loss(\xp)-\loss(\xk)+A(\|s_{k+1}\|^2-\|s_k\|^2).
    \end{align*}

    We will first look at the term $\|s_{k+1}\|^2-\|s_k\|^2$.
    Note that
    \begin{align*}
        \|s_{k+1}\|^2
        &=
        \|\beta s_k-\alpha d_k\|^2 \\
        &\leq
        \beta^2\|s_k\|^2+\alpha^2\|d_k\|^2+2\alpha\beta\|s_k\|\|d_k\|.
    \end{align*}

    Next, consider $\loss(\xp)-\loss(\xk)$, and note that from
    $L$-smoothness of $\loss$ and uniform boundedness of $\Pk$ we
    have that
    %
    \begin{align*}
        \loss(\xp)
        &\leq
        \loss(\yk)-\alpha\ip[\Gk,d_k]+\frac{L\alpha^2}{2}\|d_k\|^2 \\
        &\leq
        \loss(\yk)-\underbrace{\left(\alpha \Pmin-\frac{\alpha^2L}{2}
        \right)}_{c_\alpha}\|d_k\|^2,
    \end{align*}
    and note that $c_\alpha>0$ for all $0<\alpha<2\Pmin/L$. Next, and
    again by $L$-smoothness of $\loss$, using the fact that
    $\yk=\xk+\beta s_k$, and Young's inequality we get that
    \begin{align*}
        \loss(\yk)
        &\leq
        \loss(\xk)+\beta\ip[\dLoss[\xk],s_k]
        +\frac{L}{2}\beta^2\|s_k\|^2.
    \end{align*}

    Then, note that $\dLoss[\xk]=\Gk+e_k$ where
    $e_k=\dLoss[\xk]-\dLoss[\yk]$. Then by $L$-smoothness
    of $\loss$ and uniform boundedness of $\Pk$
    \begin{align*}
        \ip[\dLoss[\xk],s_k]
        &=
        \ip[\Gk+e_k,s_k] \\
        &=
        \ip[\Gk,s_k]+\ip[e_k,s_k] \\
        &\leq
        \Pmax\|d_k\|\|s_k\|+L\beta\|s_k\|^2.
    \end{align*}

    Hence
    \begin{align*}
        \loss(\yk)
        &\leq
        \loss(\xk)+\beta \Pmax\|d_k\|\|s_k\|
        +\left(L\beta^2+\frac{L}{2}\beta^2\right)\|s_k\|^2\\
        &=\loss(\xk)+\beta \Pmax\|d_k\|\|s_k\|
        +\frac{3L}{2}\beta^2\|s_k\|^2.
    \end{align*}

    Putting the current equations together results in
    \begin{equation*}
        \loss(\xp)
        \leq
        \loss(\xk)+\frac{3L}{2}\beta^2\|s_k\|^2
        +\beta \Pmax\|d_k\|\|s_k\|-c_\alpha\|d_k\|^2.
    \end{equation*}

    Writing the iteration for $\Phi_k$ then results
    in
    %
    \begin{align*}
        \Phi_{k+1}-\Phi_k
        &\leq
        \underbrace{\left(\frac{3L}{2}\beta^2+A(\beta^2-1)
        \right)}_{=:\,-C_s(\alpha,\beta)}\|s_k\|^2
        -\underbrace{\left(c_\alpha-A\alpha^2\right)}_{=:\,C_d(\alpha)}
        \|d_k\|^2
        +\underbrace{\beta(\Pmax+2A\alpha)}_{=:\,C_c(\alpha,\beta)}
        \|s_k\|\|d_k\|.
    \end{align*}

    Now choose
    \begin{equation*}
        A=\frac{\Pmin}{4\alpha}.
    \end{equation*}
    Then $2A\alpha=\Pmin/2$, so $C_c(\alpha,\beta)=\beta(\Pmax+\Pmin/2)$ is
    independent of $\alpha$, and
    \begin{equation*}
        C_d(\alpha)
        =
        c_\alpha-A\alpha^2
        =
        \alpha\left(\frac{3\Pmin}{4}-\frac{L\alpha}{2}\right)
        >0
        \qquad\text{for all }\alpha\in\left(0,\frac{3\Pmin}{2L}\right)=:(0,\alpha_1).
    \end{equation*}
    Hence $\Phi_{k+1}-\Phi_k\leq0$ whenever the quadratic form
    $C_s\|s_k\|^2+C_d\|d_k\|^2-C_c\|s_k\|\|d_k\|$ is positive
    definite, which by Sylvester's criterion holds if and only if
    $C_s>0$ and $4C_sC_d-C_c^2>0$.

    For fixed $\beta\in[0,1)$, the condition $C_s>0$ reads
    explicitly
    \begin{equation}
        \label{eq:Cs-pos}
        \alpha < \frac{\Pmin(1-\beta^2)}{6L\beta^2} =: \alpha_2(\beta),
    \end{equation}
    with no constraint for $\beta=0$. Substituting
    $C_c=\beta(\Pmax+\Pmin/2)$, $C_s=\Pmin(1-\beta^2)/(4\alpha)-3L\beta^2/2$,
    and $C_d=\alpha(3\Pmin/4-L\alpha/2)$ into $4C_sC_d-C_c^2>0$ and
    expanding, the discriminant condition is equivalent to the
    quadratic inequality in $\alpha$
    \begin{equation}
        \label{eq:quad-alpha}
        3L^2\beta^2\,\alpha^2
        -\frac{L\Pmin(1+8\beta^2)}{2}\,\alpha
        +\frac{3\Pmin^2(1-\beta^2)}{4}
        -\beta^2\!\left(\Pmax+\frac{\Pmin}{2}\right)^{\!2}
        > 0.
    \end{equation}
    The constant term in \eqref{eq:quad-alpha} is strictly positive
    if and only if
    \begin{equation*}
        \beta^2
        <
        \frac{3\Pmin^2/4}{3\Pmin^2/4+\left(\Pmax+\Pmin/2\right)^2},
    \end{equation*}
    i.e.\ $\beta<\beta^*$ where
    \begin{equation}
        \label{eq:betastar-thm1}
        \beta^*
        :=
        \frac{\sqrt{3}\Pmin/2}{\sqrt{3\Pmin^2/4+\left(\Pmax+\Pmin/2\right)^2}}
        \;\in\;(0,1).
    \end{equation}
    Since the leading coefficient of \eqref{eq:quad-alpha} is
    positive, the linear coefficient is negative, and the constant
    term is positive for $\beta<\beta^*$, the quadratic opens upward
    and is positive at $\alpha=0$, so it has two positive roots
    $\alpha_-(\beta)\leq\alpha_+(\beta)$ and is positive on
    $(0,\alpha_-(\beta))$. Define
    \begin{equation}
        \label{eq:Delta}
        \Delta(\beta)
        :=
        \frac{L^2\Pmin^2(1+8\beta^2)^2}{4}
        -3L^2\beta^2\!\left[
            \frac{3\Pmin^2(1-\beta^2)}{4}
            -\beta^2\!\left(\Pmax+\frac{\Pmin}{2}\right)^{\!2}
        \right],
    \end{equation}
    so that
    \begin{equation}
        \label{eq:alphastar-explicit}
        \alpha_-(\beta)
        =
        \frac{
            L\Pmin(1+8\beta^2)/2 - \sqrt{\Delta(\beta)}
        }{
            6L^2\beta^2
        },
    \end{equation}
    and note that if $\beta=0$ then the problem reduces to a preconditioned gradient descent which can easily be shown satisfy this Theorem.  To see that $\alpha_-(\beta)<\alpha_1$, evaluate
    \eqref{eq:quad-alpha} at $\alpha=\alpha_1=3\Pmin/(2L)$:
    \begin{equation*}
        3L^2\beta^2\cdot\frac{9\Pmin^2}{4L^2}
        -\frac{L\Pmin(1+8\beta^2)}{2}\cdot\frac{3\Pmin}{2L}
        +\frac{3\Pmin^2(1-\beta^2)}{4}
        -\beta^2\!\left(\Pmax+\frac{\Pmin}{2}\right)^{\!2}
        =
        -\beta^2\!\left(\Pmax+\frac{\Pmin}{2}\right)^{\!2}
        \leq 0,
    \end{equation*}
    so the quadratic is non-positive at $\alpha_1$, hence
    $\alpha_-(\beta)<\alpha_1$ for all $\beta\in(0,\beta^*)$, and
    $C_d(\alpha)>0$ is automatically satisfied on
    $(0,\alpha_-(\beta))\subset(0,\alpha_1)$.
    Moreover, $\alpha_-(\beta)<\alpha_2(\beta)$ always holds for
    $\beta<\beta^*$, since at $\alpha=\alpha_2(\beta)$ one has
    $C_s=0$, so $4C_sC_d-C_c^2=-C_c^2<0$, meaning $\alpha_2(\beta)$
    lies above the root $\alpha_-(\beta)$, and hence $C_s>0$ is
    automatically satisfied on $(0,\alpha_-(\beta))$.
    Setting $\alpha^*(\beta):=\alpha_-(\beta)$, all three conditions
    $C_s>0$, $C_d>0$, and $4C_sC_d>C_c^2$ hold for every
    $\alpha\in(0,\alpha^*(\beta))$.

    It remains to convert positive definiteness of the quadratic
    form into a bound with independent squared terms. Applying
    Young's inequality with $\delta=\sqrt{C_d/C_s}>0$ to the cross term gives
    \begin{equation*}
        C_c\|s_k\|\|d_k\|
        \leq
        \frac{C_c\sqrt{C_s}}{2\sqrt{C_d}}\|s_k\|^2
        +\frac{C_c\sqrt{C_d}}{2\sqrt{C_s}}\|d_k\|^2,
    \end{equation*}
    and therefore
    \begin{align*}
        C_s\|s_k\|^2+C_d\|d_k\|^2-C_c\|s_k\|\|d_k\|
        \geq
        \underbrace{\sqrt{C_s}\!\left(\sqrt{C_s}
        -\frac{C_c}{2\sqrt{C_d}}\right)}_{=:\,c_1}
        \|s_k\|^2
        +
        \underbrace{\sqrt{C_d}\!\left(\sqrt{C_d}
        -\frac{C_c}{2\sqrt{C_s}}\right)}_{=:\,c_2}
        \|d_k\|^2,
    \end{align*}
    where $c_1,c_2>0$ since $4C_sC_d>C_c^2$ implies
    $2\sqrt{C_sC_d}>C_c$. Therefore
    \begin{equation*}
        \Phi_{k+1}-\Phi_k
        \leq
        -c_1\|s_k\|^2-c_2\|d_k\|^2.
    \end{equation*}

    Since $\Phi_k\ge0$ by definition, summing over $k$ gives
    \begin{equation*}
        \sum_{k=0}^\infty \|s_k\|^2<\infty,
        \qquad
        \sum_{k=0}^\infty \|d_k\|^2<\infty.
    \end{equation*}

    In particular, $\lim_{k\to\infty}\|d_k\|=0$. Finally, by
    uniform boundedness of $\Pk$,
    \begin{equation*}
        \|\Gk\|=\|\Pk d_k\|\le \Pmax\|d_k\|\to0,
    \end{equation*}
    which proves \eqref{eq:PrecNestConvThm}.
\end{proof}

\subsection{Proof of Corollary \ref{cor:precNest-PL}}

\paragraph{Statement.} Let $\loss$ satisfy a global Polyak--\L{}ojasiewicz inequality with constant $\mu>0$.
    Then, for every $\beta\in[0,\beta^\star)$ and every
    $\alpha\in(0,\alpha^\star(\beta))$ (where $\beta^*$ and
    $\alpha^*(\cdot)$ are as in Theorem~\ref{thm:precNest}), there exist constants
    $C>0$ and $\rho\in(0,1)$ such that
    \begin{equation}
        f(x_k)-f^\star \leq C\rho^k,
        \qquad \forall k\in\mathbb N.
    \end{equation}

\begin{proof}
    From the proof of Theorem~\ref{thm:precNest}, for every
    $\beta\in[0,\beta^*)$ and every $\alpha\in(0,\alpha^*(\beta))$,
    there exist constants $c_1,c_2>0$ such that
    \begin{equation}
        \label{eq:lyap-dec-cor}
        \Phi_{k+1}-\Phi_k\leq -c_1\|s_k\|^2-c_2\|d_k\|^2.
    \end{equation}

    We now upper bound $\Phi_k$ in terms of $\|s_k\|^2$ and
    $\|d_k\|^2$. By $L$-smoothness of $\loss$ and the identity
    $\yk=\xk+\beta s_k$, we have
    \begin{align*}
        \loss(\xk)
        &\leq
        \loss(\yk)+\ip[\Gk,\xk-\yk]+\frac{L}{2}\|\xk-\yk\|^2 \\
        &=
        \loss(\yk)-\beta\ip[\Gk,s_k]+\frac{L}{2}\beta^2\|s_k\|^2 \\
        &\leq
        \loss(\yk)+\beta\|\Gk\|\|s_k\|+\frac{L}{2}\beta^2\|s_k\|^2.
    \end{align*}
    Applying the P\L{} inequality at $\x=\yk$ gives
    \begin{equation*}
        \loss(\yk)-\lmin\leq \frac{1}{\mu}\|\Gk\|^2.
    \end{equation*}
    Therefore,
    \begin{equation*}
        \loss(\xk)-\lmin
        \leq
        \frac{1}{\mu}\|\Gk\|^2+\beta\|\Gk\|\|s_k\|+\frac{L}{2}\beta^2\|s_k\|^2.
    \end{equation*}

    Next, by uniform boundedness of $\Pk$ we have
    $\|\Gk\|=\|\Pk d_k\|\leq \Pmax\|d_k\|$, hence
    \begin{equation*}
        \loss(\xk)-\lmin
        \leq
        \frac{\Pmax^2}{\mu}\|d_k\|^2+\beta \Pmax\|d_k\|\|s_k\|+\frac{L}{2}\beta^2\|s_k\|^2.
    \end{equation*}
    Applying Young's inequality with parameter $\eta>0$ to the
    cross term,
    \begin{equation*}
        \beta \Pmax\|d_k\|\|s_k\|
        \leq
        \frac{\eta}{2}\|d_k\|^2+\frac{\beta^2\Pmax^2}{2\eta}\|s_k\|^2,
    \end{equation*}
    gives
    \begin{equation*}
        \loss(\xk)-\lmin
        \leq
        \left(\frac{\Pmax^2}{\mu}+\frac{\eta}{2}\right)\|d_k\|^2
        +
        \left(\frac{\beta^2\Pmax^2}{2\eta}+\frac{L}{2}\beta^2\right)\|s_k\|^2.
    \end{equation*}

    Recalling the definition of $\Phi_k$, we conclude that
    \begin{equation}
        \label{eq:phi-upper-cor}
        \Phi_k
        \leq
        \underbrace{\left(\frac{\Pmax^2}{\mu}+\frac{\eta}{2}\right)}_{=:\,C_d}
        \|d_k\|^2
        +
        \underbrace{\left(A+\frac{\beta^2\Pmax^2}{2\eta}
        +\frac{L}{2}\beta^2\right)}_{=:\,C_s}
        \|s_k\|^2.
    \end{equation}

    Combining \eqref{eq:phi-upper-cor} with \eqref{eq:lyap-dec-cor}
    gives
    \begin{align*}
        \Phi_{k+1}-\Phi_k
        &\leq
        -c_1\|s_k\|^2-c_2\|d_k\|^2 \\
        &\leq
        -\min\!\left\{\frac{c_2}{C_d},\frac{c_1}{C_s}\right\}
        \!\left(C_d\|d_k\|^2+C_s\|s_k\|^2\right) \\
        &\leq
        -\min\!\left\{\frac{c_2}{C_d},\frac{c_1}{C_s}\right\}\Phi_k.
    \end{align*}
    Therefore, defining
    $\delta:=\min\{c_2/C_d,\,c_1/C_s\}>0$,
    we obtain $\Phi_{k+1}\leq(1-\delta)\Phi_k$. Since $\Phi_k\geq0$ for all $k$, we may take $\rho\in(0,1)$ such that $\Phi_{k+1}\leq\rho\Phi_k$. Iterating gives
    \[
        \Phi_k\leq \rho^k\Phi_0.
    \]
    Finally, since $f(x_k)-f^\star\leq\Phi_k$, it follows that
    \[
        f(x_k)-f^\star\leq \Phi_0\rho^k.
    \]
    Setting $C:=\Phi_0$ concludes the proof.
\end{proof}

\subsection{Proof of Theorem \ref{thm:Muesterov-Precompact}}

\paragraph{Statement.} Let $\{\xk,\yk\}$ be the sequence generated by \eqref{eq:Muesterov-def} from the initial condition $\{x_0,x_0\}$. Assume $f$ is proper, $L$-smooth, has a unique global minimiser $x^*$, and satisfies a global P\L{} inequality with constant $\mu>0$. Then there exist $\beta^*\in(0,1]$ and a positive function $\alpha^*:[0,\beta^*)\to\re_{>0}$ such that for every $\beta\in[0,\beta^*)$ and every $\alpha\in(0,\alpha^*(\beta))$ the sequences $\{\xk\}$ and $\{\yk\}$ are bounded.

\begin{proof}
    Throughout this proof, $\|\cdot\|$ and $\langle\cdot,\cdot\rangle$
    denote the Frobenius norm and inner product on $\re[\n\times \m]$,
    i.e.\ $\|A\|^2=\operatorname{tr}(A^\top A)$ and
    $\langle A,B\rangle=\operatorname{tr}(A^\top B)$.

    For every $C\geq \loss(x_0)-f^*$, fix an open neighborhood $\Dset$
    of $x^*$ with $\overline{\Dset}\subset\operatorname{int}(\DCset)$
    where $\DCset:=\{f\leq f^*+C\}$, and define
    $\Kset:=\DCset\setminus{\Dset}$,
    $r_{\Dset}:=\operatorname{dist}(\overline{\Dset},\DCset^c)>0$,
    \begin{equation}
        \label{eq:gamma-Gamma}
        \gamma(C) := \min_{x\in \Kset}\|\nabla f(x)\| > 0,
        \qquad
        \Gamma(C) := \max_{x\in \DCset}\|\nabla f(x)\| < \infty,
    \end{equation}
    and
    \begin{equation}
        \label{eq:beta-star-C}
        \beta^*(C)
        :=
        \frac{
            \gamma(C)^2/\!\left(\Gamma(C)\sqrt{\m}\sqrt{\gamma(C)^2+\eps^2}\right)
        }{
            1+\gamma(C)^2/\!\left(\Gamma(C)\sqrt{\m}\sqrt{\gamma(C)^2+\eps^2}\right)
        }
        \;\in\;(0,1).
    \end{equation}
    Since $\beta^*(C)\in(0,1)$ for every $C\geq \loss(x_0)-f^*$, the
    supremum
    \begin{equation}
        \label{eq:beta-star}
        \beta^* := \sup_{C\,\geq\, \loss(x_0)-f^*} \beta^*(C)
    \end{equation}
    is well-defined and satisfies $\beta^*\in(0,1]$.
    Now fix any $\beta\in[0,\beta^*)$ and define the set of
    admissible levels
    \begin{equation}
        \label{eq:Cadmissible}
        \mathcal{C}(\beta)
        :=
        \bigl\{C\geq \loss(x_0)-f^* \;:\; \beta < \beta^*(C)\bigr\},
    \end{equation}
    which is non-empty by definition of $\beta^*$: since
    $\beta<\beta^*=\sup_C\beta^*(C)$, there exists at least one
    $C\in\mathcal{C}(\beta)$.
    The remainder of the proof holds for every $C\in\mathcal{C}(\beta)$;
    we write $\gamma:=\gamma(C)$ and $\Gamma:=\Gamma(C)$ for brevity.

    Write $s_k:=\xk-\xm$ and
    $d_{\eps}(y):=(\nabla f(y)\nabla f(y)^\top+\eps^2 I)^{-1/2}
    \nabla f(y)$.
    If $\nabla f(y)=U\Sigma V^\top$ is the SVD with singular values
    $\sigma_1\geq\cdots\geq\sigma_m\geq 0$, then
    $d_{\eps}(y)=U(\Sigma^2+\eps^2 I)^{-1/2}\Sigma V^\top$
    with singular values $\sigma_i/\sqrt{\sigma_i^2+\eps^2}<1$,
    so
    \begin{equation}
        \label{eq:dbound}
        \|d_{\eps}(y)\|^2
        = \sum_{i=1}^{\m} \frac{\sigma_i^2}{\sigma_i^2+\eps^2}
        < \m.
    \end{equation}
    The recursion $s_{k+1}=\beta s_k-\alpha d_{\eps}(\yk)$
    with $s_0=0$ and \eqref{eq:dbound} give, by induction,
    \begin{equation}
        \label{eq:vel}
        \|s_k\|\leq\frac{\alpha\sqrt{\m}}{1-\beta}=:\bar{s},
        \qquad
        \|\yk-\xk\|\leq\rho:=\frac{\alpha\beta\sqrt{\m}}{1-\beta},
        \qquad\forall\,k\geq 0,
    \end{equation}
    unconditionally for any $\beta\in[0,1)$.

    For $y\in \Kset$, by direct computation using the SVD,
    \begin{equation}
        \label{eq:inner-exact}
        \langle\nabla f(y),d_{\eps}(y)\rangle
        = \sum_{i=1}^{\m}\frac{\sigma_i^2}{\sqrt{\sigma_i^2+\eps^2}}.
    \end{equation}
    The function $\phi(t):=t/\sqrt{t+\eps^2}$ is concave on
    $[0,\infty)$ with $\phi(0)=0$, hence subadditive, so
    \begin{equation}
        \label{eq:subadditive}
        \sum_{i=1}^{\m}\phi(\sigma_i^2)
        \geq \phi\!\left(\sum_{i=1}^{\m}\sigma_i^2\right)
        = \frac{\|\nabla f(y)\|^2}{\sqrt{\|\nabla f(y)\|^2+\eps^2}}
        \geq
        \frac{\gamma^2}{\sqrt{\gamma^2+\eps^2}},
    \end{equation}
    where the last inequality uses that $t\mapsto t^2/\sqrt{t^2+\eps^2}$
    is increasing and $\|\nabla f(y)\|\geq\gamma$ on $\Kset$.
    By $L$-smoothness and \eqref{eq:dbound},
    \begin{align}
        f(y-\alpha d_{\eps}(y))
        &\leq f(y)
            - \alpha\langle\nabla f(y), d_{\eps}(y)\rangle
            + \frac{L}{2}\alpha^2\|d_{\eps}(y)\|^2 \nonumber\\
        &\leq f(y)
            - \left(\frac{\alpha\gamma^2}{\sqrt{\gamma^2+\eps^2}}
            - \frac{L\m}{2}\alpha^2\right)
         \nonumber\\&=: f(y) - c_\alpha,\label{eq:fdec}
    \end{align}
    where $c_\alpha>0$ for all $\alpha\in(0,\alpha_1)$ with
    $\alpha_1(C):=2\gamma^2/(L\m\sqrt{\gamma^2+\eps^2})>0$.

    Next, we show $\yk\in\DCset$ implies $\xp,\yp\in\DCset$. We consider two cases.

    \emph{Case 1: $\yk\in\Dset$.}
    Since $\|\xp-\yk\|=\|\alpha d_{\eps}(\yk)\|\leq\alpha\sqrt{\m}$ and
    $\|\yp-\yk\|\leq\|\yp-\xp\|+\|\xp-\yk\|\leq\rho+\alpha\sqrt{\m}=\bar{s}$, both $\xp$ and
    $\yp$ remain in $\DCset$ provided $\bar{s}<r_{\Dset}$, i.e.
    \begin{equation}
        \label{eq:cond1}
        \frac{\alpha\sqrt{\m}}{1-\beta} < r_{\Dset}(C).
    \end{equation}

    \emph{Case 2: $\yk\in\Kset$.}
    By \eqref{eq:fdec}, $\loss(\xp)\leq f(\yk)-c_\alpha\leq f^*+C$,
    so $\xp\in\DCset$ and $\|\nabla \loss(\xp)\|\leq\Gamma$.
    By $L$-smoothness,
    $f(\yp)\leq \loss(\xp)+\Gamma\rho+\frac{L}{2}\rho^2$.
    For $\yp\in\DCset$ it suffices to have
    $\Gamma\rho+\frac{L}{2}\rho^2<c_\alpha$, since then
    $f(\yp)<\loss(\xp)+c_\alpha\leq f(\yk)\leq f^*+C$, where the last inequality in the chain comes from $\yk\in\Kset$.
    We therefore require
    \begin{equation}
        \label{eq:cond2}
        \Gamma\rho+\frac{L}{2}\rho^2 < c_\alpha.
    \end{equation}

    We next look into admissible ranges for $\alpha$. Condition \eqref{eq:cond1} holds for all
    $\alpha<(1-\beta)r_{\Dset}(C)/\sqrt{\m}$.
    Substituting $\rho=\alpha\beta\sqrt{\m}/(1-\beta)$ into
    \eqref{eq:cond2} and rearranging, the condition holds whenever
    $\alpha<\alpha_2(\beta,C)$ where
    \begin{equation}
        \label{eq:alpha2}
        \alpha_2(\beta,C)
        :=
        \frac{2(1-\beta)^2}{L\m(1-2\beta+2\beta^2)}
        \left(
            \frac{\gamma^2}{\sqrt{\gamma^2+\eps^2}}
            - \frac{\Gamma\beta\sqrt{\m}}{1-\beta}
        \right).
    \end{equation}
    Since $C\in\mathcal{C}(\beta)$, we have $\beta<\beta^*(C)$,
    which is precisely the condition that makes the bracket in
    \eqref{eq:alpha2} strictly positive, so $\alpha_2(\beta,C)>0$.
    Defining
    \begin{equation}
        \label{eq:alphastar-C}
        \alpha^*(\beta,C)
        :=\min\!\left\{
            \alpha_1(C),\;
            \alpha_2(\beta,C),\;
            \frac{(1-\beta)r_{\Dset}(C)}{\sqrt{\m}}
        \right\}>0,
    \end{equation}
    all three terms of which are strictly positive for every
    $C\in\mathcal{C}(\beta)$, both \eqref{eq:cond1} and
    \eqref{eq:cond2} hold for all $\alpha\in(0,\alpha^*(\beta,C))$.
    Since $\alpha^*(\beta,C)>0$ for every $C\in\mathcal{C}(\beta)$
    and $\mathcal{C}(\beta)$ is non-empty, define
    \begin{equation}
        \label{eq:alphastar}
        \alpha^*(\beta)
        :=
        \sup_{C\,\in\,\mathcal{C}(\beta)}\alpha^*(\beta,C)\;>\;0.
    \end{equation}
    For any $\alpha\in(0,\alpha^*(\beta))$, by definition of the
    supremum there exists $C\in\mathcal{C}(\beta)$ such that
    $\alpha<\alpha^*(\beta,C)$, and hence both \eqref{eq:cond1}
    and \eqref{eq:cond2} are satisfied for this $C$.

    Since $C\geq \loss(x_0)-f^*$, we have $x_0,y_0\in\DCset$.
    The above shows $\yk\in\DCset$ implies $\xp,\yp\in\DCset$ for
    all $\alpha\in(0,\alpha^*(\beta))$ and $\beta\in[0,\beta^*)$.
    By induction, $\xk,\yk\in\DCset$ for all $k\geq 0$.
    Since $\DCset$ is compact, the sequences are bounded.
\end{proof}

\subsection{Proof of Corollary \ref{cor:Muesterov-Convergence}}

\paragraph{Statement.} For the algorithm presented in \eqref{eq:Muesterov-def}, there exists $\bar\beta\in(0,1)$ and $\bar\alpha:[0,\bar\beta)\to\re_+$, such that every $\beta\in[0,\bar\beta)$ and every $\alpha\in(0,\bar\alpha(\beta))$ the sequences $\{\xk\}$ and $\{\yk\}$ are bounded and $\lim_{k\to\infty}\|\dLoss[\yk]\|=0$.

\begin{proof}
    Define $\bar\beta:=\min\{\beta^*_3,\beta^*_4\}
    \in(0,1)$ and $\bar\alpha(\beta):=\min\{\alpha^*_3(\beta),
    \alpha^*_4(\beta)\}>0$, where $\beta^*_i$ and $\alpha^*_i(\cdot)$
    are as in Theorem~$i$ for $i=3,4$. Fix any $\beta\in[0,\bar\beta)$ and $\alpha\in(0,\bar\alpha(\beta))$.
    Since $\beta<\beta^*_4$ and $\alpha<\alpha^*_4(\beta)$,
    Theorem~\ref{thm:Muesterov-Precompact} guarantees that $\{\xk\}$ and
    $\{\yk\}$ remain in the compact set $\DCset$ for some
    $C\in\mathcal{C}(\beta)$.
    In particular, the eigenvalues of
    $\Pk=(\dLoss[\yk]\dLoss[\yk]^\top+\eps^2 I)^{1/2}$
    lie in $[\eps,\sqrt{\Gamma(C)^2+\eps^2}]$
    for all $k\geq 0$, $\Pk$ is bounded with
    $\Pmin=\eps$ and $\Pmax=\sqrt{\Gamma(C)^2+\eps^2}$.
    Since $\beta<\beta^*_3$ and $\alpha<\alpha^*_3(\beta)$,
    Theorem~\ref{thm:precNest} then gives
    $\lim_{k\to\infty}\|\dLoss[\yk]\|=0$.
\end{proof}

\newpage 

\end{document}